\newcommand{\smashprod}{\wedge}
\newcommand{\zz}{{ \mathbb{Z} }}
\newcommand{\nn}{{ \mathbb{N} }}
\newcommand{\qq}{{ \mathbb{Q} }}
\newcommand{\hh}{{ \mathbb{H} }}
\newcommand{\sphspec}{{ \mathbb{S} }}
\newcommand{\ccal}{{ \mathcal{C} }}
\newcommand{\dcal}{{ \mathcal{D} }}
\newcommand{\ecal}{{ \mathcal{E} }}
\newcommand{\mcal}{{ \mathcal{M} }}
\newcommand{\gcal}{{ \mathcal{G} }}
\newcommand{\fscr}{{ \mathscr{F} }}
\newcommand{\fcal}{{ \mathcal{F} }}
\newcommand{\fibrep}{{ \widehat{f} }}
\newcommand{\cofrep}{{ \widehat{c} }}
\DeclareMathOperator{\id}{Id}
\DeclareMathOperator{\h}{H}
\DeclareMathOperator{\ho}{Ho}
\DeclareMathOperator{\colim}{colim}
\DeclareMathOperator{\rightmod}{mod--}
\DeclareMathOperator{\leftmod}{--mod}
\DeclareMathOperator{\underhom}{\underline{Hom}}
\DeclareMathOperator{\sing}{Sing}
\DeclareMathOperator{\ch}{Ch}
\DeclareMathOperator{\gr}{gr.}
\DeclareMathOperator{\homSSS}{Hom}
\renewcommand{\hom}{\homSSS}
\newcommand{\sqq}{{ \text{s}\qq }}
\newtheorem{theorem}{Theorem}[section]
\newtheorem{proposition}[theorem]{Proposition}
\newtheorem{corollary}[theorem]{Corollary}
\newtheorem{lemma}[theorem]{Lemma}
\newtheorem{definition}[theorem]{Definition}
\newtheorem{remark}[theorem]{Remark}
\newenvironment{proof}[1][Proof]{\vskip -0.25cm \textbf{#1} }
{\hfill \rule{0.5em}{0.5em}}
\begin{document}

\title{Classifying Rational $G$-Spectra for Finite $G$}
\author{David Barnes}

\maketitle

\begin{abstract}
\noindent We give a new proof that for a finite group $G$, 
the category of rational $G$-equivariant spectra
is Quillen equivalent to the  
product of the model categories of chain complexes of 
modules over the rational group ring of 
the Weyl group of $H$ in $G$, 
as $H$ runs over the conjugacy classes of subgroups of $G$.
Furthermore the Quillen equivalences of our proof are all
symmetric monoidal. Thus we can understand categories
of algebras or modules over a ring spectrum in terms of 
the algebraic model. 
\end{abstract}

\section{Introduction}
A $G$-equivariant cohomology theory $E^*$ is said to be rational 
if $E^*(X)$ is a rational vector space for every $G$-space $X$.
For $G$, a finite group, we want to describe the category of rational 
$G$-equivariant cohomology theories
in terms of a simple algebraic model. 
In particular, we want to understand those cohomology theories
with a multiplication and the modules over such a theory. 
To do so, we give a particular construction of 
a model category of $G$-spectra 
whose homotopy category is (equivalent to)
the category of rational 
$G$-equivariant cohomology theories.
We show that this model category 
is symmetric monoidally Quillen equivalent to an explicit 
algebraic model: 
the product of the model categories of chain complexes of 
modules over the rational group ring of 
the Weyl group of $H$ in $G$, 
as $H$ runs over the conjugacy classes of subgroups of $G$.

Since our Quillen equivalences are symmetric monoidal, the category of ring spectra
is Quillen equivalent to the category of monoids 
in the algebraic model. 
Let $W_G H$ be the Weyl group of $H$ in $G$, 
the quotient of the normaliser of $H$ in $G$ by $H$, 
then a monoid in the category of chain complexes
of modules over the rational group ring of
$W_G H$, is a differential graded $\qq$-algebra with an 
action (through algebra maps) of 
the group $W_G H$. 
The category of modules over a ring spectrum
will then be Quillen equivalent to modules over a monoid in the algebraic model. 

So, if one has a ring spectrum $R$ that one wishes to study, one can look
at its image $\tilde{R}$ in the algebraic model and through the simplicity
of the model, perhaps obtain a good description of this monoid.
Here one can use the fact that the homology groups of $\tilde{R}$ are isomorphic
to the homotopy groups of $R$ to maintain some control of the homology
type of $\tilde{R}$.
In general, understanding the module category 
in the algebraic setting will then be a lot 
simpler than in the topological setting. 
Conversely one can take a differential graded algebra
with an action of the group $W_G H$ and this will correspond, 
via our Quillen equivalences,
to a ring spectrum in the category of $G$-spectra. 

%We consider a simple case and give two examples.
%Consider $G=C_p$, the cyclic group of order $p$,
%the category of rational $C_p$-equivariant
%cohomology theories is equivalent to the homotopy category of
%$\ch(\qq \leftmod) \prod \ch(\qq C_p \leftmod)$. 
%The rational Borel construction
%on singular cohomology, $\h^* ((EC_p)_+ \smashprod_{C_p} (-)) \otimes \qq$,
%corresponds to the object $(0, \qq)$,
%that is, the trivial object in $\ho \ch(\qq \leftmod)$ 
%and the object $\qq$ (with trivial action) in $\ho \ch(\qq C_p \leftmod)$.
%The $C_p$-equivariant cohomology theory given by 
%taking a $C_p$-space $X$ to 
%$\h^*(X) \otimes \qq$, singular cohomology of the
%underlying space of $X$ corresponds to 
%the object $(0, \qq C_p)$. 

We briefly describe the work
done previously in this area in 
order to introduce our result. 
The category of rational $G$-equivariant cohomology theories 
is equivalent to the 
product of the categories of graded
modules over the rational group ring of 
the Weyl group of $H$ in $G$, 
as $H$ runs over the conjugacy classes of subgroups of $G$.
see \cite[Appendix A]{gremay95} or \cite[Chapter XIX, Section 5]{may96}.

Let $G \mcal_\qq$ be a model category of $G$-spectra 
where the weak equivalences are those maps that induce isomorphisms
of rational homotopy groups.
Then the category of rational $G$-equivariant cohomology theories
is (equivalent to) the homotopy category of this model category.
The homotopy category of the 
model category of \emph{chain complexes} of 
modules over the rational group ring of $W_G H$ 
is isomorphic to the category of \emph{graded}
modules over the rational group ring of $W_G H$.
This follows since a rational chain complex is weakly
equivalent to its homology, with the latter treated
as a chain complex with all differentials zero.
Thus, it is natural to ask if the algebraic model
(the product of the model categories of chain complexes of 
modules over the rational group ring of $W_G H$, 
as $H$ runs over the conjugacy classes of subgroups of $G$) and 
$G \mcal_\qq$ are Quillen equivalent.
This was proven to be the case 
in \cite[Example 5.1.2]{ss03stabmodcat}. 

If one starts with a good model category of $G$-spectra,  
then one can ensure that $G \mcal_\qq$ has a 
smash product. Similarly, the tensor product
of rational chain complexes passes to a monoidal product on   
the algebraic model. 
In both cases, the monoidal product induces a product 
on the homotopy category.
So one can ask if these model categories are Quillen equivalent via a series of 
adjunctions which preserve the monoidal structures on the homotopy categories.
That is, is there a series of monoidal Quillen equivalences between 
$G \mcal_\qq$ and the algebraic model?
We construct such a series in this paper and thus we can conclude (using \cite{ss00}) 
that the various associated categories of algebras and modules
in these categories are Quillen equivalent. Our main results are presented in 
section \ref{sec:mainresults}.

\section{Outline of the proof}\label{sec:outline}

We begin by explaining \cite[Example 5.1.2]{ss03stabmodcat}:
starting from a category of rational $G$-spectra
one considers $\gcal_{top} = \{ \Sigma^\infty G/H_+ \}$, the set of 
suspension spectra of the orbit spaces $G/H_+$. 
This provides a set of generators
for the homotopy category of rational $G$-spectra. By using the good
properties of $L_\qq G \mcal$ we can construct
$\ecal_{top}(\sigma_1, \sigma_2)$, a symmetric spectrum of functions
for each pair, $(\sigma_1,\sigma_2)$, of objects of $\gcal_{top}$.
This collection has a composition rule,
$\ecal_{top}(\sigma_2, \sigma_3) \smashprod \ecal_{top}(\sigma_1, \sigma_2)
\to \ecal_{top}(\sigma_1, \sigma_3).$

Thus we have created an enriched category which we call
$\ecal_{top}$, it has object set $\gcal_{top}$ and the subscript
$top$ indicates that this category is of topological
origin. We can consider the category of enriched contravariant functors
from $\ecal_{top}$ to symmetric spectra. We call such
a functor a right $\ecal_{top}$-module
and these functors and the enriched natural transformations 
form a category $\rightmod \ecal_{top}$.
If $M$ is one of these enriched functors then
for each pair $(\sigma_1$, $\sigma_2)$ in $\gcal_{top}$
we have symmetric spectra
$M(\sigma_1)$ and $M(\sigma_2)$ with an action map
$M(\sigma_2) \smashprod \ecal_{top}(\sigma_1, \sigma_2) 
\to M(\sigma_1)$. An enriched natural transformation
$f \colon M \to N$ 
is then a collection of maps of symmetric spectra
$F(\sigma) \colon M(\sigma) \to N(\sigma)$
compatible with the action maps. If each
$f(\sigma)$ is a weak equivalence we say that $f$
is a weak equivalence of modules and these weak equivalences
are part of a model structure on $\rightmod \ecal_{top}$

This category of modules is referred to as the collection
of `topological Mackey functors' in \cite{ss03stabmodcat}.
The categories $L_\qq G \mcal$ and $\rightmod \ecal_{top}$
are Quillen equivalent by \cite[Theorem 3.3.3]{ss03stabmodcat}.
Since $G$ is finite and we are working
rationally, the homotopy groups of $\ecal_{top}(\sigma_1, \sigma_2)$
(that is, the set of graded maps in the homotopy category of symmetric spectra
from $S$ to $\ecal_{top}(\sigma_1, \sigma_2)$)
is concentrated in degree zero where it takes value
$\underline{A}(\sigma_1, \sigma_2)$,
a $\qq$-module. Hence $\ecal_{top}(\sigma_1, \sigma_2)$
it is weakly equivalent to an Eilenberg-MacLane spectrum
$\h \underline{A}(\sigma_1, \sigma_2)$.

From the collection of spectra $\h \underline{A}(\sigma_1, \sigma_2)$,
we construct a category $\h \underline{A}$, which is enriched
over symmetric spectra and one replaces $\rightmod \ecal_{top}$
by the Quillen equivalent category $\rightmod \h \underline{A}$.
The collection $\underline{A}(\sigma_1 , \sigma_2)$
for $\sigma_1, \sigma_2 \in \gcal_{top}$
can be thought of as defining category enriched over $dg \qq \leftmod$
and thus we have a model category $\rightmod \underline{A}$.
There is a zig-zag of Quillen equivalences
between $\rightmod \h \underline{A}$
and $\rightmod \underline{A}$.
Thus we have a zig-zag of Quillen equivalences between 
rational $G$-spectra and an algebraic category.

The algebraic category $\rightmod \underline{A}$ is actually well-known.  
Consider the collection of 
abelian group-enriched functors 
from $\underline{A}$ to $\qq \leftmod$, this is better known as the category
of rational Mackey functors, hence $\rightmod \underline{A}$
is the category of differential graded rational Mackey functors. 
The homotopy category of $\rightmod \underline{A}$ 
is equivalent to the category of graded rational Mackey functors 
which, by \cite[Appendix A]{gremay95}, 
is equivalent to $\prod_{(H) \leqslant G} \gr \qq W_G H \leftmod$.
This classification
does not consider monoidal structures since the zig-zag between 
$\rightmod \h \underline{A}$
and $\rightmod \underline{A}$ passes through a category without a
monoidal product. This proof relies upon the fact that 
the homotopy groups of $\ecal_{top}(\sigma_1, \sigma_2)$
are concentrated in degree zero.

We now turn to the method of this paper, 
in the next section we provide a diagram showing 
the Quillen equivalences that we use.
We start by replacing
$L_\qq G \mcal$ by the Quillen equivalent category 
$\prod_{(H) \leqslant G} S_H \leftmod$
(where each $S_H$ is a commutative ring spectrum) 
so that we can work one factor at a time.
The homotopy category 
of $S_H \leftmod$ is generated by 
$\sigma_H = G/H_+ \smashprod S_H$ and for any two $S_H$-modules 
we have a symmetric spectrum
function object $\ecal_{top}^H(X,Y)$. 
We define $\gcal_{top}^H$ to be all smash products of 
$\sigma_H$ including $S_H$ as the zero-fold smash.
Let $\ecal_{top}^H$ be the symmetric spectrum-enriched category 
with object set $\gcal_{top}^H$. The category $S_H \leftmod$ 
is Quillen equivalent to 
$\ecal_{top}^H \leftmod$.

One can now apply an alteration of \cite[Corollary 2.16]{shiHZ}
(included here as Proposition \ref{prop:shipmachine})
to construct a category $\ecal_t^H$ from
$\ecal_{top}^H$. This new category will be
enriched over rational chain complexes with its set
of objects given by $\gcal_{top}^H$. The $t$ indicates that we have come from
the topological side but are now working in an algebraic setting.
We can consider enriched functors from $\ecal_t^H$ to
rational chain complexes, this category will be
denoted $\rightmod \ecal_t^H$.

Now we begin our work from the other end, 
$\ch(\qq W_G H \leftmod)$ is generated by the object 
$\qq W_G H$, so we let $\gcal_a^H$ be the set of all
tensor products of $\qq W_G H$ (with $\qq$ as the 
zero-fold product). 
Analogously to the topological setting,
the set $\gcal_a^H$ is the object set for a category $\ecal_a^H$,
which is enriched over rational chain complexes.
We can then replace our algebraic model 
by the Quillen equivalent category $\rightmod \ecal_a^H$.
The $a$ indicates we have come from the algebraic model.

So far this process has been formal,
now we use some specific information about
$\ecal_t^H$ and $\ecal_a^H$ to achieve a comparison between them.
The comparison we will use is the notion of
a quasi-isomorphism of categories
enriched over rational chain complexes.
Given two such categories $\mathcal{C}$ and $\mathcal{D}$,
an enriched functor $F \colon \mathcal{C} \to \mathcal{D}$
is a quasi-isomorphism if 
$F$ induces an isomorphism on the classes of objects 
and each 
$F(\sigma_1, \sigma_2) \colon
\mathcal{C}(\sigma_1, \sigma_2) \to \mathcal{D}(F\sigma_1, F\sigma_2)$,
is a homology isomorphism
($F(\sigma_1, \sigma_2)$ is a map in the
category of rational chain complexes).
A quasi-isomorphism induces a Quillen equivalence
between $\rightmod \ccal$ and $\rightmod \dcal$.

In our case we define an isomorphism $F$ on the object sets
be sending the $i$-fold product of $\qq W_G H$ 
to the $i$-fold product of $G/H \smashprod S_H$. 
We then prove that 
the homology of $\ecal_t^H(F\sigma_1, F\sigma_2)$
is concentrated in degree zero and is isomorphic to 
$\ecal_a^H(\sigma_1, \sigma_2)$.
Since we are enriched over rational chain complexes, 
it follows that $\ecal_t^H$ and 
$\ecal_a^H$ are quasi-isomorphic
and hence that 
there is a zig-zag of Quillen equivalences between
$\rightmod \ecal_t^H$ and $\rightmod \ecal_a^H$.
Thus the category of rational $G$-spectra 
is Quillen equivalent to $\prod_{(H) \leqslant G} \ch(\qq W_G H \leftmod)$.
If we can prove that the quasi-isomorphisms between
$\ecal_t$ and $\ecal_a$ respect the monoidal structure, 
then it will follow that the zig-zag between 
rational $G$-spectra and the algebraic model
consists of monoidal Quillen equivalences. 

The difference between our method 
and that of \cite{ss03stabmodcat} is the ordering of the work.
We split the category, move to Mackey functors 
(modules over a symmetric spectrum-enriched category),
translate to algebra and then use the fact that 
$\ecal_t^H$ has homology concentrated
in degree zero.
Whereas, \cite{ss03stabmodcat} goes to Mackey functors first,
uses the fact that $\ecal_{top}$ has homotopy concentrated 
in degree zero, moves to algebra and then splits
the category.
The category $\ecal_t^H$ is constructed so that
its homology groups are isomorphic to the homotopy
groups of $\ecal_{top}^H$, hence the two methods
use the same information, just in different
contexts.

The result could have been proven
without using the splitting theorem.
The advantage to using this extra step is that 
$S_H \leftmod$ is generated by a single object,
which makes $\ecal_{top}^H$ and $\ecal_t^H$ 
easier to work with.

\section{Organisation}\label{sec:organise}

We describe the results of each section and 
display the Quillen equivalences that we use. 
Left adjoints will be placed on top, 
all adjunctions are symmetric (weak) monoidal
and furthermore all left adjoints except $L'$ 
are strong symmetric monoidal functors. 

In section \ref{sec:finitealg}, we examine our 
algebraic category and describe the Quillen equivalence between $\ch(\qq W_G H \leftmod)$ 
and $\rightmod \ecal_a^H$, a category of modules over an enriched category.
\[ 
(-) \otimes_{\ecal_a^H} \gcal_a^H \colon
\rightmod \ecal_a^H \overrightarrow{\longleftarrow}
\ch( \qq W_G H \leftmod ) \colon \underhom(\gcal_a^H, -).
\]

%\[
%\xymatrix{
%G \mcal_\qq \ar@<0.8ex>[r]^(0.4){\prod} &
%\underset{(H) \leqslant G}{\prod} {L_{E \langle H \rangle} G \mcal_\qq}
%\ar@<0.8ex>[l]^(0.6){\Delta}
%\ar@<0.8ex>[r]^{S_H \smashprod (-)} &
%\prod_{(H) \leqslant G} S_H \leftmod
%\ar@<0.8ex>[l]^{U}
%}\]

We then develop a category of rational spectra in section \ref{sec:finitetop}
and `split' this category into a product $\prod_{(H) \leqslant G} S_H \leftmod$
so that it looks more like the algebraic category. We end this section by 
replacing $S_H \leftmod$ by $\rightmod \ecal_{top}^H$, modules over a category enriched in
symmetric spectra, this step requires a little work. 
\[
G \mcal_\qq
\overset{\Delta}{\overrightarrow{\xleftarrow[\phantom{b}\prod\phantom{b}]{}}}
\prod_{(H) \leqslant G} L_{E \langle H \rangle} G \mcal_\qq
\overset{S_H \smashprod (-)}{\overrightarrow{\xleftarrow[\phantom{ab}U\phantom{ab}]{}}}
\prod_{(H) \leqslant G} S_H \leftmod
\overset{(-) \smashprod_{\ecal_{top}^H} \gcal_{top}^H}
{\overleftarrow{\xrightarrow[\underhom (\gcal_{top}^H,-)]{}}}
\prod_{(H) \leqslant G} \rightmod \ecal_{top}^H
\]
Now we give the functors from section \ref{sec:ETOPtoET}
(though we omit the identity adjunction of Lemma \ref{lem:postostable}).
The functors below are adaptations of those in \cite{shiHZ} and we define 
$\ecal_t^H$ to be $D \phi^* N \tilde{\qq} \ecal_{top}^H$, 
a category enriched over rational chain complexes.
\[
\rightmod \ecal_{top}^H
\overset{\tilde{\qq}}{\overrightarrow{\xleftarrow[\phantom{b}U'\phantom{b}]{}}}
\rightmod \tilde{\qq} \ecal_{top}^H
\overset{L'}
{\overleftarrow{\xrightarrow[\phi^* N]{}}}
\rightmod\phi^* N \tilde{\qq}  \ecal_{top}^H
\overset{D}{\overrightarrow{\xleftarrow[\phantom{b}R'\phantom{b}]{}}}
\rightmod D \phi^* N \tilde{\qq} \ecal_{top}^H
\]
In section \ref{sec:finitecomp} we prove that the homology category of the enriched
category $\ecal_t^H$ is isomorphic to the category $\ecal_a^H$. 
Let $\psi \colon \ecal_a^H \to \h_* \ecal_t^H$ be this isomorphism
(Theorem \ref{thm:hocalc})
and write $(\psi^{-1})^*$ for the left adjoint to 
$\psi^*$.
Let $i \colon C_0 \ecal_t^H \to \ecal_t^H $ and 
$p \colon C_0 \ecal_t^H \to \h_* \ecal_t^H$ be the maps constructed in 
Corollary \ref{cor:finiteETtoEA}. Then we have 
Quillen equivalences as below. 
\[
\rightmod \ecal_t^H
\overset{(-) \otimes_{C_0 \ecal_t^H} \ecal_t^H}
{\overleftarrow{\xrightarrow[\phantom{abcd}i^*\phantom{abcd}]{}}}
\rightmod C_0 \ecal_t^H
\overset{(-) \otimes_{C_0 \ecal_t^H} \h_* \ecal_t^H}
{\overrightarrow{\xleftarrow[\phantom{abcd}p^*\phantom{abcd}]{}}}
\rightmod \h_* \ecal_t^H
\overset{(\psi^{-1})^*}
{\overleftarrow{\xrightarrow[\phantom{ab}\psi^*\phantom{ab}]{}}}
\rightmod \ecal_a^H
\]
We put these results together to obtain the main result and 
consider categories of algebras and modules
in section \ref{sec:mainresults}.

We provide a list of the enriched categories that we use in this paper.
The object set $\gcal_a^H$ is the set of all tensor products of 
$\qq W_G H$ and $\gcal_{top}^H$ is the set of all smash products of 
$(\cofrep G/H_+ ) \smashprod S_H$ (in the category of $S_H$-modules).
For $S_H$-modules $X$ and $Y$, 
$\underhom (X,Y) = \sing \mathbb{U} (i^* \nn^\# F_{S_H}(X,Y))^{G}$
as we will explain later.
We define $\ecal_t^H$ to be $D \phi^* N \tilde{\qq} \ecal_{top}^H$.
Note that a category enriched over $\gr( \qq \leftmod)$
can also be considered as enriched over $\ch( \qq \leftmod)$.

\begin{tabular}{|c|c|c|c|}
\hline
Name & Enrichment & Object Set & Morphism Object \\
\hline 
\hline
\raisebox{-0.1cm}{$\ecal_a^H$} & 
\raisebox{-0.1cm}{$\ch( \qq \leftmod)$ }& 
\raisebox{-0.1cm}{$\gcal_a^H$ }& 
\raisebox{-0.1cm}{$\hom_\qq (a,b)^{W_G H}$}
\\[0.2cm]
\hline
\raisebox{-0.1cm}{$\ecal_{top}^H$ }& 
\raisebox{-0.1cm}{$Sp^\Sigma$ }& 
\raisebox{-0.1cm}{$\gcal_{top}^H$ }&  
\raisebox{-0.1cm}{$\underhom(X,Y)$ }
\\[0.2cm]
\hline
\raisebox{-0.1cm}{$\tilde{\qq} \ecal_{top}^H$ }& 
\raisebox{-0.1cm}{$Sp^\Sigma(\sqq \leftmod)$ }& 
\raisebox{-0.1cm}{$\gcal_{top}^H$ }& 
\raisebox{-0.1cm}{$\tilde{\qq} \underhom (X,Y)$ }
\\[0.2cm]
\hline
\raisebox{-0.1cm}{$\phi^* N \tilde{\qq} \ecal_{top}^H$ }& 
\raisebox{-0.1cm}{$Sp^\Sigma(\ch(\qq \leftmod)_+)$ }& 
\raisebox{-0.1cm}{$\gcal_{top}^H$ }&
\raisebox{-0.1cm}{$\phi^* N \tilde{\qq} \underhom (X,Y)$}
\\[0.2cm]
\hline
\raisebox{-0.1cm}{$\ecal_t^H$ }& 
\raisebox{-0.1cm}{$\ch( \qq \leftmod)$ }&
\raisebox{-0.1cm}{$\gcal_{top}^H$ }&
\raisebox{-0.1cm}{$D \phi^* N \tilde{\qq} \underhom (X,Y)$}
\\[0.2cm]
\hline
\raisebox{-0.1cm}{$\pi_* \ecal_{top}^H$ }& 
\raisebox{-0.1cm}{$\gr( \qq \leftmod)$ }&
\raisebox{-0.1cm}{$\gcal_{top}^H$ }& 
\raisebox{-0.1cm}{$\pi_* \underhom (X,Y)$}
\\[0.2cm]
\hline
\raisebox{-0.1cm}{$\ho \ecal_{top}^H$ }& 
\raisebox{-0.1cm}{$\gr( \qq \leftmod)$} &
\raisebox{-0.1cm}{$\gcal_{top}^H$ }& 
\raisebox{-0.1cm}{$[X,Y]^{S_H}_*$}
\\[0.2cm]
\hline
\raisebox{-0.1cm}{$\pi_* \gcal_{top}^H$ }& 
\raisebox{-0.1cm}{$\gr( \qq \leftmod)$ }&
\raisebox{-0.1cm}{$\gcal_{top}^H$ }& 
\raisebox{-0.1cm}{$\hom_\qq (\pi_*^H (X), \pi_*^H(Y))^{W_G H}$}
\\[0.2cm]
\hline
\raisebox{-0.1cm}{$C_0 \ecal_t^H$ }& 
\raisebox{-0.1cm}{$\ch( \qq \leftmod)$ }&
\raisebox{-0.1cm}{$\gcal_{top}^H$ }& 
\raisebox{-0.1cm}{$C_0 \ecal_t^H (X,Y)$}
\\[0.2cm]
\hline
\raisebox{-0.1cm}{$\h_* \ecal_t^H$ }& 
\raisebox{-0.1cm}{$\gr( \qq \leftmod)$ }&
\raisebox{-0.1cm}{$\gcal_{top}^H$ }& 
\raisebox{-0.1cm}{$\h_* \ecal_t^H (X,Y)$}
\\[0.2cm]
\hline
\end{tabular}

\textbf{Acknowledgments} This work consists of material from my PhD thesis, 
supervised by John Greenlees. I would to thank him for all the 
help and advice he has given me. I would also like to thank
Constanze Roitzheim for a careful reading 
of an earlier version of this paper
and for providing innumerable helpful suggestions. 

\section{The Algebraic Category}\label{sec:finitealg}

We study the algebraic model in detail
and the main result of this section is 
Theorem \ref{thm:FiniteAlgMorita}. 
In a very rough sense, this results puts the information 
of the algebraic model into a standard form, suitable for later comparisions.
We also use this section as an introduction
to the methods of this paper. 

For a ring $R$, let $\ch( R \leftmod)$ be the category of
chain complexes of left $R$-modules and let 
$\gr (R \leftmod)$ be the category of graded 
left $R$-modules.
The category of chain complexes of $R$-modules
has a model structure
(some times called the projective model structure) where
a map of chain complexes is a weak equivalence
if it is a homology isomorphism and
a fibration if it is a surjection.
The cofibrations are level-wise
split monomorphisms with cofibrant
cokernel. 
For each $n \in \zz$, let $S^{n}R$
be the chain complex concentrated
in degree $n$, where it takes value $R$.
Let $D^n R$ be the chain complex
with $R$ in degrees $n$ and $n-1$
and zeroes elsewhere, with
the identity as the differential
from degree $n$ to $n-1$.
The projective model structure is cofibrantly generated
with generating cofibrations
the inclusions $S^{n-1}R \to D^n R$
and generating acyclic cofibrations the maps 
$0 \to D^n R$. See
\cite[Section 2.3]{hov99} for more details.

For a finite group $G$, let $\qq G$ be the rational group
ring of $G$. This is a Hopf-algebra with co-commutative coproduct
$\Delta \colon \qq G \to \qq G \otimes \qq G$
induced by $g \mapsto g \otimes g$.
For $\qq G$-chain complexes $X$ and $Y$
we have $X \otimes_\qq Y$, the tensor product of 
$X$ and $Y$ considered as objects of $\ch(\qq \leftmod)$.
For $n \in \zz$, 
$(X \otimes_\qq Y)_n = \oplus_{i+j=n} X_i \otimes_\qq Y_j$
and we define a $G$-action by 
$g \cdot (x \otimes y) = (g \cdot x) \otimes (g \cdot y)$.
Hence $X \otimes_\qq Y$ is an object of $\ch(\qq G \leftmod)$.
That this product is associative and commutative
follows from the corresponding properties for 
the tensor product of $\ch(\qq \leftmod)$ and the 
co-commutative Hopf-algebra structure on $\qq G$. 
The unit of this product is $S^0 \qq$ equipped with
with trivial $G$-action.
Furthermore there is an internal homomorphism object, defined by
$\hom_\qq(X, Y)_n = \prod_k \hom_\qq(X_k, Y_{n+k})$,
 with $G$-action by conjugation 
and we have a natural isomorphism 
\[
\ch(\qq G \leftmod) (X \otimes_\qq Y, Z) \cong \ch(\qq G \leftmod) (X , \hom_\qq(Y,Z)).
\]

\begin{definition}
For any $X \in \ch( \qq G \leftmod)$,
there is natural map of chain complexes
$Av_G \colon X \to X^G$
defined by $Av_G(x) =  |G|^{-1} \Sigma_{g \in G} gx$.
\end{definition}

For a chain complex of $\qq$-modules $X$, let 
$\varepsilon^*(X)$ denote $X$ with the trivial action, 
an object of $\ch(\qq G \leftmod)$.
This functor is the left adjoint of a strong symmetric monoidal adjoint pair
(this terminology is defined later in this section)
\[
\varepsilon^* \colon \ch( \qq \leftmod) \overrightarrow{\longleftarrow}
\ch(\qq G \leftmod) \colon (-)^{G} 
\]
the right adjoint
is the fixed point functor.
We show that this is a Quillen pair by
proving that the right adjoint
preserves fibrations and weak equivalences.
Take $f \colon X \to Y$ a surjection
and let $y \in Y^G$, then there is an $x$ such that
$f(x) = y$. 
Let $Av_G(x) =  |G|^{-1} \Sigma_{g \in G} gx$, then since
$Av_G(x) \in X^G$
and $f(Av_G(x)) =Av_G(f(x))= Av_G(y) =y$, it follows
that $f^G$ is surjective.
That $(-)^G$ preserves weak equivalences
is immediate: $H_*(X^G) \cong (H_*X)^G$, as we are working rationally.
This implies that $\qq = \varepsilon^*(\qq)$ is 
cofibrant as an object of $\ch( \qq G \leftmod)$.

\begin{definition}\label{def:pushmonoid}
Let $\ccal$ be a cofibrantly generated model category, 
with a symmetric monoidal product $\otimes$, internal function object
$\hom_{\ccal}(-,-)$ and unit $I$.
Then $\ccal$ satisfies the \textbf{pushout product axiom} if 
the following three conditions hold (see \cite[Lemma 3.5(1)]{ss00}), in which
case $\ccal$ is called a \textbf{monoidal model category}.
\begin{enumerate}
\item If $f \colon A \to B$ and $g \colon C \to D$
are generating cofibrations then the pushout product,
$f \diamondsuit  g \colon B \otimes C \coprod_{A \otimes C} A \otimes D \to B \otimes D$, is a cofibration.
\item If $f$ is a generating cofibration and $g$
is a generating acyclic cofibration then
$f \diamondsuit g$ is a weak equivalence.
\item If $X$ is a cofibrant object then for any cofibrant 
replacement of the unit $\cofrep I \to I$ the induced map
$X \otimes \cofrep I \to X \otimes I$ is a weak equivalence. 
\end{enumerate}
Let $Z$ be any object of $\ccal$ and let 
$P_Z$ be the set of maps of the form 
$\id_Z \otimes f$ where $f$ is a generating acyclic
cofibration. The class $P_Z$-cell (\cite[Definition 2.1.9]{hov99}) 
is the collection of all maps
formed by transfinite compositions of pushouts of maps of 
$P_Z$. The model category $\ccal$ satisfies the \textbf{monoid axiom} if 
for any object $Z$ the class $P_Z$-cell consists of weak equivalences
(see \cite[Lemma 3.5(2)]{ss00}).
\end{definition}
The pushout product axiom ensures that the
monoidal product of a model category $\ccal$ induces a 
monoidal product on the homotopy category of 
$\ccal$. The monoid axiom (roughly speaking) ensures that 
there are model structures on the categories
of $R$-algebras and $R$-modules, for $R$ a 
commutative monoid in $\ccal$. 

\begin{proposition}\label{prop:chQGmonoid}
The tensor product and homomorphism object 
defined above gives the projective model structure on
$\ch( \qq G \leftmod)$
the structure of a closed symmetric monoidal model
category that satisfies the monoid axiom.
\end{proposition}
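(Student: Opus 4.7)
The closed symmetric monoidal structure is already laid out in the discussion preceding the proposition: the associator, symmetry and unit isomorphisms for $\otimes_\qq$ are inherited from $\ch(\qq \leftmod)$ together with the co-commutative Hopf-algebra structure on $\qq G$, and the tensor--hom adjunction has been recorded. What remains is to verify the three clauses of the pushout product axiom on generators and then the monoid axiom.

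The unit $S^0 \qq$ equals $\varepsilon^*(\qq)$, which has already been shown to be cofibrant in $\ch(\qq G \leftmod)$, so clause (3) of the pushout product axiom holds trivially by taking $\cofrep I = I$. For clauses (1) and (2) I would check the pushout product on the generating (acyclic) cofibrations $f_n \colon S^{n-1}\qq G \to D^n \qq G$ and $a_n \colon 0 \to D^n \qq G$. The key computation is that $\qq G \otimes_\qq \qq G$ equipped with the diagonal $G$-action is free of rank $|G|$ as a $\qq G$-module: the change of variables $x \otimes y \mapsto x \otimes x^{-1}y$ converts the diagonal action into the action on the left factor only. Consequently $S^n \qq G \otimes_\qq S^m \qq G$ is a free $\qq G$-module concentrated in degree $n+m$, and a direct chain level calculation shows that $f_n \diamondsuit f_m$ is a levelwise split monomorphism with this cofibrant cofibre, verifying clause (1); the map $a_n \diamondsuit f_m$ has cofibre $D^n \qq G \otimes_\qq S^m \qq G$, which is acyclic because $D^n \qq G$ is contractible over the field $\qq$, verifying clause (2).

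For the monoid axiom, fix $Z \in \ch(\qq G \leftmod)$ and consider $P_Z$, the set of maps $\id_Z \otimes_\qq a_n \colon 0 \to Z \otimes_\qq D^n \qq G$. Since $D^n \qq G$ is contractible and tensor product over the field $\qq$ preserves contractibility, each $Z \otimes_\qq D^n \qq G$ is acyclic. Any pushout of such a map along the unique $0 \to W$ is the degreewise split inclusion $W \to W \oplus (Z \otimes_\qq D^n \qq G)$, whose cokernel is acyclic; and degreewise split monomorphisms with acyclic cokernel are closed under transfinite composition because filtered colimits in $\ch(\qq \leftmod)$ commute with homology. Thus $P_Z$-cell consists of weak equivalences (in fact of acyclic cofibrations), and the monoid axiom follows.

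The only genuinely nontrivial ingredient is the freeness of $\qq G \otimes_\qq \qq G$ under the diagonal action; once this is isolated, every remaining verification is a direct chain-level calculation leveraging the fact that every object of $\ch(\qq G \leftmod)$ is flat over $\qq$.
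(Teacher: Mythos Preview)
Your argument is correct and follows essentially the same route as the paper: the cokernel computation $\qq G \otimes_\qq \qq G \cong \bigoplus_{g \in G} \qq G$ for clause~(1), acyclicity via the contractible factor $D^n \qq G$ for clause~(2), cofibrancy of the unit for clause~(3), and the monoid axiom via acyclicity of $Z \otimes_\qq D^n \qq G$ together with closure of injective homology isomorphisms under pushout and transfinite composition. The only cosmetic difference is that the paper phrases the closure step as ``these are acyclic cofibrations in the injective model structure'' rather than appealing directly to filtered colimits commuting with homology; your explicit shear isomorphism $x \otimes y \mapsto x \otimes x^{-1}y$ is a nice addition the paper leaves implicit.
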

\begin{proof}
Let $f$ and $g$ be generating cofibrations,
then $f \diamondsuit g$ is an inclusion 
and the cokernel is $\qq(G \times G )$
(in some degree).
This cokernel is cofibrant: it is isomorphic
(as a $\qq G $-module) to $\bigoplus_{g \in G} \qq G$.
For a generating cofibration $f$ and a generating acyclic
cofibration $g$, 
$f \diamondsuit g$ is a weak equivalence since
both the domain and codomain are acyclic.
Since the unit $\qq$ is cofibrant the last condition
of the pushout product axiom holds automatically. 

The projective model structure on $ \ch( \qq \leftmod)$ 
satisfies the monoid axiom, this is proven for a general 
ring in \cite[Proposition 3.1]{shiHZ}. 
In fact this proof also suffices to show that 
$\ch( \qq G \leftmod)$ satisfies the monoid axiom, we copy that proof
with notation adjusted to our setting. 

The generating acyclic cofibrations for $\ch( \qq G \leftmod)$
are the maps $0 \to D^n(\qq G)$, for $n$ an integer.
Take any $Z \in \ch( \qq G \leftmod)$, then it is easy to check that
$Z \otimes_\qq  D^n(\qq G)$ is also acyclic.
Then we note that $0 \to Z \otimes_\qq D^n(\qq G)$
is an injection and a homology isomorphism.
Such maps are closed under pushouts and
transfinite compositions -- they are acyclic cofibrations
in the injective model structure (\cite[Theorem 2.3.13]{hov99}) on
chain complexes of $\qq G$-modules. 
Hence the monoid axiom holds for $\ch( \qq G \leftmod)$.
\end{proof}

The homotopy category of a pointed model category $\mathcal{C}$ supports
a suspension functor $\Sigma$ with a right adjoint loop functor $\Omega$,
see \cite[Section 6.1]{hov99}.
If these are inverse equivalences then $\mathcal{C}$ is 
called a \textbf{stable model category}.
All of the model categories that we use in this paper
are stable model categories. 
An object $X$ of $\mathcal{C}$ is said to be 
\textbf{compact} if for any family of objects $\{ Y_i \}_{i \in I}$, the canonical map
$\oplus_{i \in I} [X,  Y_i]^\mathcal{C} \to [X, \coprod_{i \in I} Y_i]^\mathcal{C}$,
is an isomorphism. A stable model category $\mathcal{C}$ 
is said to be \textbf{generated}
by a set of objects $\mathscr{P}$ if the smallest
full triangulated subcategory
of $\ho \mathcal{C}$ (with shift and triangles induced from $\ho \mathcal{C}$),
that is closed under coproducts, is $\ho \mathcal{C}$ itself. 
By \cite[Lemma 2.2.1]{ss03stabmodcat}, if the set $\mathscr{P}$
consists of compact objects, then this statement is
equivalent to the following: 
an object $X$ is trivial in the homotopy category
if and only if $[P, X]_*^\mathcal{C}$ (graded maps in the homotopy category) 
is zero for each $P \in \mathscr{P}$. 

\begin{lemma}
The model category $\ch( \qq G \leftmod)$
is generated by the compact object $\qq G$.
\end{lemma}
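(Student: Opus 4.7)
The plan is to apply the criterion of \cite[Lemma 2.2.1]{ss03stabmodcat} quoted just above: provided that $\qq G$ is compact, it generates $\ho \ch(\qq G \leftmod)$ if and only if the vanishing of $[\qq G, X]_*^{\ch(\qq G \leftmod)}$ forces $X$ to be trivial in the homotopy category. So I need to do two things: compute the graded maps out of $\qq G$, and then read off both compactness and the detection property.

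First I would identify $[\qq G, X]_*^{\ch(\qq G \leftmod)}$ with homology. Take $\qq G$ to mean $S^0 \qq G$, the chain complex concentrated in degree $0$. Since $\qq G$ is a free $\qq G$-module, $S^0 \qq G$ is built from the generating cofibration $S^{-1} \qq G \to D^0 \qq G$ and is therefore cofibrant in the projective model structure. Every object is fibrant, so for each integer $n$ the derived mapping group is computed strictly:
\[
[S^n \qq G, X]^{\ch(\qq G \leftmod)} \cong \h_n\bigl(\hom_{\qq G}(S^0 \qq G, X)\bigr) \cong \h_n(X),
\]
where the last isomorphism uses that $\hom_{\qq G}(\qq G, X_k) = X_k$ in each degree $k$.

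Next I would deduce compactness. Coproducts in $\ch(\qq G \leftmod)$ are computed degreewise as direct sums, and homology of chain complexes commutes with arbitrary direct sums. Combined with the isomorphism $[\qq G, -]_*^{\ch(\qq G \leftmod)} \cong \h_*(-)$ of the previous paragraph, this shows that the canonical map
\[
\bigoplus_{i \in I} [\qq G, Y_i]_*^{\ch(\qq G \leftmod)} \longrightarrow [\qq G, \textstyle\coprod_{i \in I} Y_i]_*^{\ch(\qq G \leftmod)}
\]
is an isomorphism, so $\qq G$ is compact.

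Finally, for generation: if $[\qq G, X]_*^{\ch(\qq G \leftmod)} = 0$ then $\h_*(X) = 0$ by the same computation, hence $X \to 0$ is a homology isomorphism and thus a weak equivalence, so $X$ is trivial in $\ho \ch(\qq G \leftmod)$. The cited criterion then yields that $\qq G$ generates the homotopy category. There is no serious obstacle; the only point to watch is to make sure that $S^0 \qq G$ really is cofibrant so that the homotopy mapping groups coincide with the naive ones, which is exactly the description of the generating cofibrations of the projective model structure recalled earlier in this section.
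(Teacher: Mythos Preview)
Your proof is correct and follows essentially the same route as the paper: the key is the identification $[\qq G, X]_*^{\ch(\qq G \leftmod)} \cong \h_*(X)$, from which both compactness and the detection criterion are immediate. The paper compresses this into one line, passing through the free--forgetful adjunction to $\ch(\qq \leftmod)$ (so that $[\qq G, X]_*^{\qq G} \cong [\qq, X]_*^\qq$) rather than computing $\hom_{\qq G}(\qq G,-)$ directly as you do, but the content is the same.
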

\begin{proof}
Let $X \in \ch( \qq G \leftmod)$, then 
$[\qq G, X]_*^{\qq G} \cong [\qq, X]^\qq \cong H_*(X)$.
\end{proof}

We now take the time to introduce the terminology 
of right modules over an enriched category and the notion
of monoidal Quillen equivalences. We will use 
this machinery in several different settings and 
it provides the framework for our method of proof. 
Later we will use other model categories in place of 
$\ch( \qq \leftmod)$. 

\begin{definition}
A $\ch( \qq \leftmod)$-\textbf{category} is a category enriched over 
$\ch( \qq \leftmod)$ (see \cite[Section 1.5]{kell05}).
A \textbf{right module} over a $\ch( \qq \leftmod)$-category $\ecal$
is a contravariant enriched functor $M \colon \ecal \to \ch( \qq \leftmod)$,
the category of such functors and enriched natural transformations 
is denoted by $\rightmod \ecal$. The \textbf{free module}
on an object $a$ of $\ecal$ is $F_a = \hom_\qq(-, a)$. 
\end{definition}
Let $M$ be a right $\ecal$-module, then for each object $a$ of $\ecal$
there is an object 
$M(a) \in \ch( \qq \leftmod)$.
For a pair of objects $a, b \in \ecal$ we have a map in 
$\ch( \qq \leftmod)$ 
\[
M_{a,b} \colon \ecal (a, b) \to \hom_\qq (M(b), M(a))
\]
a more useful version is given in terms of the adjoint to $M_{a,b}$, 
the `action map' 
$M(b) \otimes_\qq \ecal(a, b) \to M(a)$.
An enriched natural transformation 
$f \colon M \to N$ is a collection of maps
$f(a) \colon M(a) \to N(a)$ compatible with these action maps.
We can also form the category $\h_* \ecal$, this has the 
same object set as $\ecal$ and is enriched over graded 
$\qq$-modules, with morphism objects defined by 
$(\h_* \ecal)(a,b) = \h_* (\ecal(a,b))$. 

The category of right modules over $\ecal$ has a model 
structure with weak equivalences and fibrations 
defined object-wise in $\ch( \qq \leftmod)$,
see \cite[Subsection 3.3]{ss03stabmodcat}.
The collection of free modules is a generating set 
and these are cofibrant since the unit of 
$\ch( \qq \leftmod)$ is. 
The generating (acyclic) cofibrations of $\rightmod \ecal$
have form $A \otimes_\qq \id_{F_a} \to B \otimes_\qq \id_{F_a}$
(the object-wise tensor product)
for $A \to B$ a generating (acyclic) cofibration
of $\ch( \qq \leftmod)$.

Following \cite[Page 2]{day70} we define
a \textbf{symmetric monoidal enriched category} 
as an enriched category $\ecal$, 
with an enriched functor 
$\otimes \colon \ecal \times \ecal \to \ecal$ 
satisfying associativity, unitary and symmetry 
conditions. Such a category has 
a `unit object' which we denote by $I$. 
Thus for any two objects 
$a$ and $b$ of $\ecal$ we have an object $a \otimes b$ in $\ecal$ and 
for each quadruple $(a,b,c,d)$ of objects of $\ecal$
we have a map
\[
\ecal(a,c) \otimes_\qq \ecal(b,d) \to \ecal (a \otimes b, c \otimes d)
\]
which is compatible with the composition of $\ecal$.
The associativity, unitary and symmetry 
conditions imply that for any quadruple $(a,b,c,d)$
we have isomorphisms as below, which are compatible with 
the composition of $\ecal$.
\[
\begin{array}{rcl}
\ecal ((a \otimes b) \otimes c, d) \cong \ecal (a \otimes (b \otimes c), d) & & 
\ecal ( d, (a \otimes b) \otimes c) \cong \ecal ( d, a \otimes (b \otimes c)) \\
\ecal (a \otimes I,d) \cong \ecal (a,d) & &
\ecal (d, a \otimes I) \cong \ecal (d, a) \\
\ecal (a \otimes b , d) \cong \ecal (b \otimes a, d) &&
\ecal (d, a \otimes b ) \cong \ecal (d, b \otimes a)
\end{array}
\]
By assuming that $\ecal$ is a symmetric monoidal enriched category 
(and that the collection of objects of $\ecal$ forms a set)
we can put a symmetric monoidal structure on $\rightmod \ecal$
with unit the free module on $I$: $\ecal(-, I)$. 
The formula used is quite complicated, but it occurs 
often when constructing monoidal products.
The right-hand-side of the definition will be a coend, a 
particular form of a colimit and we give some details after the definition.
Let $M$ and $N$ be two 
objects of $\rightmod \ecal$ then their \textbf{box product}, $M \square N$,
is defined by the formula below. 
\[
M \square N(a)  = \int^{b,c} M(b) \otimes_\qq N(c) \otimes_\qq \ecal(a, b \otimes c)
\]
If $F \colon \ccal^{op} \times \ccal \to \ch( \qq \leftmod)$
is a $\ch( \qq \leftmod)$-enriched functor, where the objects of $\ccal$ form a set, then 
\[
\int^a F(a,a)  = \textrm{coeq} \left(
\coprod_{b, c \in \ccal}
F(b,c) \otimes_\qq \ccal (c,b) \overrightarrow{\longrightarrow}
\coprod_{d \in \ccal} F(d,d) \right).
\]
The two maps are given by $\ccal(c,b)$ acting on either the first or second
variable of $F$. We note that 
if $G$ is a right module over $\ccal$ then 
$\int^a  G(a) \otimes_\qq \ccal(b,a) \cong G(b)$
and if we have a functor 
\[
H \colon \ccal^{op} \times \ccal^{op} \times \ccal \times \ccal \to \ch( \qq \leftmod)
\]
then there is a canonical isomorphism 
$\int^a \int^b H(a,b,a,b) \cong \int^b \int^a H(a,b,a,b)$,
hence we allow ourselves to write $\int^{a,b} H(a,b,a,b)$ for either of these. 
The use of the functor 
$\otimes \colon \ecal \times \ecal \to \ecal$ is hidden in the definition
of $\square$, but becomes clear when $M \square N$ is written
in terms of a coequaliser of coproducts. For more
information on enriched categories and coends see \cite{mac} or \cite{bor94}.

The pushout product axiom and monoid axiom for $\ch(\qq \leftmod)$
imply that they also hold for $\rightmod \ecal$.
This routine statement is proven in \cite[Theroem 5.3.9]{barnes},
which also implies that the pushout product and monoid axioms hold
in all the categories of right modules that we will encounter.

As well as enrichments we can also consider tensorings and cotensorings over
the category $\ch( \qq \leftmod)$. That is, $\ccal$ 
is \textbf{tensored} over $\ch( \qq \leftmod)$
if it is equipped with a functor
\[
- \otimes - \colon \ch( \qq \leftmod) \times \ccal \to \ccal,
\]
unit isomorphisms $\qq \otimes a \cong a$
and associativity isomorphisms 
$(M \otimes_\qq N) \otimes a \cong M \otimes ( N \otimes a)$. 
Similarly $\ccal$ is \textbf{cotensored} over $\ch( \qq \leftmod)$
if there is a functor 
\[
\hom(- , -) \colon \ch( \qq \leftmod)^{op} \times \ccal \to \ccal
\]
also satisfying unital and associativity conditions. 
Often a category $\ccal$ will be 
enriched, tensored and cotensored over $\ch( \qq \leftmod)$ all at once,
whereupon for a chain complex $M$ and objects $a$ and $b$ of $\ccal$ 
we require isomorphisms of chain complexes as below
that make all the various unital and associativity conditions
compatible. 
\[
\hom_\qq (M, \ccal (a,b))
\cong
\ccal(M \otimes a, b)
\cong 
\ccal(a, \hom(M, b))
\]
One important consequence of having all three of these structures 
linked by isomorphisms as above is that this ensures that tensor operation preserves
colimits in both variables (similar statements then hold for the cotensor
and enrichment).

Let $\mathcal{C}$ be a model category which is enriched, tensored and cotensored 
over the category $\ch( \qq \leftmod)$, with isomorphisms 
relating the three structures as above.
Then $\mathcal{C}$ is said to be a $\ch( \qq \leftmod)$-\textbf{model category} 
(\cite[Definition 4.2.18]{hov99}) if whenever
$f$ is a cofibration of $\mathcal{C}$ and 
$g$ is a cofibration of $\ch( \qq \leftmod)$,
then $f \diamondsuit g$ 
(the notation $\diamondsuit$ is from Definition \ref{def:pushmonoid}) 
is a cofibration of $\mathcal{C}$
that is acyclic if one of $f$ or $g$ is. 

By definition a $\ch( \qq \leftmod)$-model category is a 
$\ch( \qq \leftmod)$-category. Note that 
$\ch( \qq G \leftmod)$ is a $\ch( \qq \leftmod)$-model category
with tensor, cotensor and enrichment defined via 
the adjunction $(\varepsilon^*,(-)^G)$. 
So for a $\qq$-chain complex $M$ and 
$\qq G$-chain complexes $X$ and $Y$, the tensor product is given by 
$M \otimes X = \varepsilon^*( M ) \otimes_\qq X$, 
the cotensor by 
$\hom_\qq (\varepsilon^*( M ), X)$ and the enrichment by 
$\hom_\qq (X,Y)^G$. 

For $H$ a subgroup of $G$,
let $N_G H$ be the normaliser of $H$: the largest subgroup of 
$G$ which contains $H$ as a normal subgroup, 
we then define $W_G H =  N_G H/H$, the Weyl group of $H$ in $G$. 
We write $\qq W_G H \leftmod$ for the category of 
$\qq$-modules with a left action of $W_G H$.

\begin{definition}
Let $\gcal_{a,G} = \{ \qq, \qq G, \qq (G \times G ), \qq(G \times G \times G), \dots \}$
and define $\ecal_{a,G}$ to be the $\ch( \qq \leftmod)$-category
with object set $\gcal_{a,G}$
and $\ch( \qq \leftmod)$-mapping object given by
$\ecal_{a,G} (X, Y) = \hom_\qq(X,Y)^{G }$.
Now we define $\gcal_{a,G}^H = \gcal_{a,W_G H}$
and $\ecal_{a,G}^H = \ecal_{a,W_G H}$. We will
usually suppress the $G$
and reduce this notation to
$\gcal_a^H$ and $\ecal_a^H$.
\end{definition}

Since the enrichment of $\ch( \qq W_G H \leftmod)$
over $\ch( \qq \leftmod)$ is defined in terms of the
strong symmetric monoidal adjunction $(\varepsilon^*,(-)^G)$,
the result below follows by a routine argument.

\begin{lemma}
The category $\ecal_a^H$ is a symmetric monoidal 
$\ch( \qq \leftmod)$-category.
\end{lemma}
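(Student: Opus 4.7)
The plan is to induce the symmetric monoidal $\ch(\qq \leftmod)$-enrichment on $\ecal_a^H$ from the symmetric monoidal structure on $\ch(\qq W_G H \leftmod)$ given by Proposition \ref{prop:chQGmonoid}, together with the lax symmetric monoidal adjunction $(\varepsilon^*, (-)^{W_G H})$ used to define that enrichment.

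First I would observe that the object set $\gcal_a^H$ is closed under $\otimes_\qq$ and contains the unit $\qq$, since by construction it consists of all tensor powers $(\qq W_G H)^{\otimes n}$ for $n \geq 0$. Thus $\otimes_\qq$ restricts to a map $\otimes \colon \gcal_a^H \times \gcal_a^H \to \gcal_a^H$ on underlying object sets.

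Next, to promote $\otimes$ to a $\ch(\qq \leftmod)$-enriched functor I would construct the required tensor pairing
\[
\hom_\qq(a,c)^{W_G H} \otimes_\qq \hom_\qq(b,d)^{W_G H} \longrightarrow \hom_\qq(a \otimes_\qq b, c \otimes_\qq d)^{W_G H}
\]
by applying the lax monoidal functor $(-)^{W_G H}$ to the natural pairing
\[
\hom_\qq(a,c) \otimes_\qq \hom_\qq(b,d) \longrightarrow \hom_\qq(a \otimes_\qq b, c \otimes_\qq d)
\]
obtained by tensoring chain maps. That this latter pairing is $W_G H$-equivariant uses the co-commutative Hopf algebra structure on $\qq W_G H$, which endows $\otimes_\qq$ with its diagonal action and $\hom_\qq$ with its conjugation action. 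Compatibility of the resulting map with composition and identities is inherited directly from $\ch(\qq W_G H \leftmod)$.

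Finally, the associator, unitor and symmetry isomorphisms for $\otimes_\qq$ on $\ch(\qq W_G H \leftmod)$ are themselves $W_G H$-equivariant and thus are morphisms in $\ecal_a^H$; the pentagon, triangle and hexagon coherences hold because they already hold in the ambient category. The only step requiring genuine care, and presumably what the author means by ``routine argument'', is verifying enriched naturality of each of these coherence isomorphisms. This unwinds to chasing the lax symmetric monoidality of $(-)^{W_G H}$ through the definition of enriched natural transformation, and follows directly from the co-commutativity of the coproduct on $\qq W_G H$.
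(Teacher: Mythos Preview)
Your proposal is correct and takes essentially the same approach as the paper, which simply asserts that the result ``follows by a routine argument'' from the fact that the enrichment is defined via the strong symmetric monoidal adjunction $(\varepsilon^*,(-)^{W_G H})$. You have spelled out precisely what that routine argument is: closure of $\gcal_a^H$ under $\otimes_\qq$, the lax monoidal structure on $(-)^{W_G H}$ supplying the enriched tensor pairing, and the coherence isomorphisms being $W_G H$-equivariant.
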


By \cite[Theorem 3.9.3]{ss03stabmodcat}, the model 
categories $\ch( \qq W_G H \leftmod)$
and $\rightmod \ecal_a^H$ are Quillen equivalent, 
we describe the Quillen adjoint pair
of this result. The proof that these are an equivalence
is based on showing that the unit and counit of the derived
adjunction are isomorphisms on the generators
(the elements of $\gcal_a^H$ and the free modules).
Let $X$ be an object of $\ch( \qq W_G H \leftmod)$, then consider the functor
$\hom_\qq (-, X)^{W_G H} \colon \ecal_a^H \to \ch( \qq  \leftmod)$. 
This functor is enriched over $\ch( \qq  \leftmod)$ and thus we have
defined an object of $\rightmod \ecal_a^H$. 
Now let
\[
\underhom(\gcal_a^H, -) \colon \ch( \qq W_G H \leftmod) \to \rightmod \ecal_a^H
\]
be that functor which sends an object $X$ to the object $\hom_\qq (-, X)^{W_G H}$. 
This functor has a left adjoint, $(-) \otimes_{\ecal_a^H} \gcal_a^H$, 
defined in terms of a coend.
Let $M \in \rightmod \ecal_a^H$, then
\[
M \otimes_{\ecal_a^H} \gcal_a^H = \int^a M(a) \otimes a.
\] 
Now we show that the Quillen equivalence
between $\rightmod \ecal_a^H$ and
$\ch( \qq W_G H \leftmod)$ respects 
the monoidal structures. We first need some
terminology. 

\begin{definition}
Let $L \colon \ccal \overrightarrow{\longleftarrow} \ccal' \colon R$ 
be an adjunction between two monoidal categories
$(\ccal, \otimes, I)$ and $(\ccal', \otimes', I')$. 
Then $(L,R)$ is a \textbf{strong monoidal adjunction}
if the left adjoint is strong monoidal: 
so $LA \otimes' LB \cong L(A \otimes B)$, the units are related by 
an isomorphism $LI \cong I'$ and $L$ satisfies the 
associativity and unital coherence conditions of
\cite[Definition 4.1.2]{hov99}. 
Let $(L,R)$ be an adjunction of monoidal model categories such that 
there is a natural map 
$RX \otimes RY \to R(X \otimes' Y)$ and a specified map $I \to RI'$
which satisfy the associativity and unit conditions of
\cite[Diagrams 6.27 and 6.28]{bor94}.
This implies that the left adjoint has 
a natural map $m \colon L(A \otimes B) \to LA \otimes' LB$
and a map $LI \to I'$. 
We say that such an adjunction, $(L,R)$, 
is a \textbf{monoidal Quillen pair} 
(also known as lax monoidal or weak monoidal) 
if whenever $A$ and $B$ are cofibrant the map 
$m$ is a weak equivalence and 
if for any cofibrant replacement $\cofrep I \to I$
the composite $L( \cofrep I) \to LI \to I'$
is a weak equivalence. 
\end{definition}

If $(L,R)$ is a Quillen pair between monoidal model categories, 
such that $L$ is strong monoidal and the 
unit of the domain of $L$ is cofibrant, 
then $(L,R)$ is a monoidal Quillen pair. 
The conditions of a weak monoidal Quillen pair ensure that on homotopy categories
the derived adjunction is strong monoidal. In general a 
monoidal Quillen equivalence between monoidal model categories satisfying the monoid axiom
induces a Quillen equivalence on the categories of algebras and modules,
see \cite[Theorem 3.12]{ss03monequiv}. 

The adjoint pair $ ( (-) \otimes_{\ecal_a^H} \gcal_a^H, \underhom(\gcal_a^H, -))$,
is a strong symmetric monoidal Quillen adjunction, we prove part of this below,
see also \cite[Proposition 3.6]{greshi}. Let $M$ and $N$ be $\ecal_a^H$-modules, then

\begin{eqnarray*}
(M \square N) \otimes_{\ecal_a^H} \gcal_a^H
& = & \int^{g } \int^{a,b }
\big(
M(a) \otimes N(b) \otimes 
{\ecal(\mathcal{G})}(g, a \otimes b) \big)  \otimes g \\
& \cong & \int^{a,b } M(a) \otimes N(b) \otimes
\left( \int^{g }
{\ecal_a^H}(g, a \otimes b) \otimes g \right) \\
& \cong & \int^{a,b  } M(a) \otimes N(b) \otimes (a \otimes b) \\
& \cong & \int^{a,b } (M(a) \otimes a) \otimes (N(b) \otimes b) \\
& \cong & \int^{a } \left( (M(a) \otimes a)
\otimes \int^{b } (N(b) \otimes b) \right) \\
& \cong & \left( \int^{a } M(a) \otimes a \right)
\otimes \left( \int^{b } N(b) \otimes b \right)  \\
& = & M \otimes_{\ecal_a^H} \gcal_a^H
\otimes N \otimes_{\ecal_a^H} \gcal_a^H.
\end{eqnarray*}

We summarise the above work in the following result
which gathers all of the information of the category 
$\ch( \qq W_G H \leftmod )$ into a category of modules over 
a $\ch( \qq \leftmod )$-enriched category. 

\begin{theorem}\label{thm:FiniteAlgMorita}
There is a strong symmetric monoidal Quillen equivalence of symmetric
 mon\-oidal model categories
that satisfy the monoid axiom:
\[ 
(-) \otimes_{\ecal_a^H} \gcal_a^H \colon
\rightmod \ecal_a^H \overrightarrow{\longleftarrow}
\ch( \qq W_G H \leftmod ) \colon \underhom(\gcal_a^H, -).
\]
\end{theorem}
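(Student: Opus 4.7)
The statement packages three claims: both categories are symmetric monoidal model categories satisfying the monoid axiom, the displayed adjunction is a Quillen equivalence, and this equivalence is strong symmetric monoidal. I would verify these in turn, leaning on the coend computation already performed in the excerpt.

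For the monoidal and monoid axioms, the codomain is exactly Proposition \ref{prop:chQGmonoid}. For $\rightmod \ecal_a^H$ the model structure, box product, and unit are all defined object-wise (modulo a coend), so the pushout product and monoid axioms descend from those of $\ch(\qq \leftmod)$; this is the content of \cite[Theorem 5.3.9]{barnes}, which the excerpt has already invoked.

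For the Quillen adjunction, I would show that $\underhom(\gcal_a^H, -)$ preserves fibrations and weak equivalences. Since both are detected object-wise, this reduces to showing that $\hom_\qq(a, -)^{W_G H} \colon \ch(\qq W_G H \leftmod) \to \ch(\qq \leftmod)$ preserves surjections and quasi-isomorphisms for each $a \in \gcal_a^H$. Each such $a$ is $\qq$-free, so $\hom_\qq(a, -)$ is exact, and $(-)^{W_G H}$ preserves surjections and rational quasi-isomorphisms by the averaging argument already used in the section for $(-)^G$. To upgrade to a Quillen equivalence I would invoke \cite[Theorem 3.9.3]{ss03stabmodcat}: both model categories are stable, the domain is generated by the compact cofibrant modules $\ecal_a^H(-, a)$, and by the coend form of the Yoneda lemma the left adjoint sends these generators to the corresponding $a \in \gcal_a^H$; meanwhile the codomain is generated by $\qq W_G H \in \gcal_a^H$. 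Checking that the derived unit on each $\ecal_a^H(-, a)$ and the derived counit on $\qq W_G H$ are weak equivalences then reduces to unravelling the defining formulas.

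For strong monoidality of $(-) \otimes_{\ecal_a^H} \gcal_a^H$, the coend computation already executed in the excerpt establishes $(M \square N) \otimes_{\ecal_a^H} \gcal_a^H \cong (M \otimes_{\ecal_a^H} \gcal_a^H) \otimes (N \otimes_{\ecal_a^H} \gcal_a^H)$, naturally in $M$ and $N$. I would supplement this with the unit check: the monoidal unit of $\rightmod \ecal_a^H$ is $\ecal_a^H(-, \qq)$, and the coend form of the Yoneda lemma gives
\[
\ecal_a^H(-, \qq) \otimes_{\ecal_a^H} \gcal_a^H = \int^a \ecal_a^H(a, \qq) \otimes a \cong \qq,
\]
the unit of $\ch(\qq W_G H \leftmod)$. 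Compatibility of these isomorphisms with the associator, unitor, and symmetry follows from routine coend manipulations together with the symmetric monoidal enrichment of $\ecal_a^H$. The main obstacle is less any single step and more the coherence bookkeeping for the monoidal isomorphisms: formal, but combinatorially dense.
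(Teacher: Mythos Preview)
Your proposal is correct and follows essentially the same route as the paper: the theorem is presented there as a summary of the preceding discussion, which invokes Proposition~\ref{prop:chQGmonoid} and \cite[Theorem 5.3.9]{barnes} for the monoidal model structures, \cite[Theorem 3.9.3]{ss03stabmodcat} for the Quillen equivalence (checking the derived unit and counit on generators), and the displayed coend computation for strong monoidality of the left adjoint. Your plan is slightly more explicit in verifying that the right adjoint is right Quillen and in recording the unit isomorphism $\ecal_a^H(-,\qq)\otimes_{\ecal_a^H}\gcal_a^H \cong \qq$, but these are natural elaborations of the same argument.
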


We now wish to repeat this operation for 
the model category of $G$-spectra and encode all of its information into
a $\ch( \qq \leftmod )$-category, this is a much more complicated task.
Once we have achieved this, we can compare 
this enriched category to $\ecal_a^H$.

\section{Rational $G$-Spectra and Splitting}\label{sec:finitetop}
We introduce a category of rational $G$-spectra
and use idempotents of the rational Burnside ring 
in Corollary \ref{cor:finitesplitting}
to split this category into a product 
of model categories, each generated
by a single object, indexed over the conjugacy 
classes of subgroups of $G$. 

We also provide a version
of this splitting in terms of modules
over a ring spectrum (Proposition \ref{prop:GspecHtoSHmod}).
We end this section with Theorem \ref{thm:finitemoritaequiv}, 
which performs the analogue of 
Theorem \ref{thm:FiniteAlgMorita}
for $S_H \leftmod$. 
We need to use the category $S_H \leftmod$ in this result
for technical reasons, 
as we explain in Remark \ref{rmk:whyfibrant}.
We take our time in introducing  
$S_H \leftmod$ as we need to understand
the weak equivalences and generators of these split pieces and 
it is easier to do so first, then move to modules
over ring spectra. 

The category of $G$-equivariant EKMM $S$-modules, $G \mcal$, is defined in 
\cite[Chapter IV]{mm02}, we refer to the objects of this category as $S$-modules
or $G$-spectra or just spectra. 
Let $H$ be a subgroup of $G$ and $n \geqslant 0$, then for an $S$-module $X$, 
we have the homotopy groups $\pi_n^H(X) = \pi_n (X(0)^H)$ 
and $\pi_{-n}^H(X) = \pi_n (X(\mathbb{R}^n)^H)$.
A map $f \colon X \to Y$ is called a $\pi_*$-isomorphism 
if $\pi_n^H(f)$ is an isomorphism for all integers $n$
and subgroups $H$ of $G$. See \cite[Chapter IV, Theorem 2.9]{mm02}
for the following result.  
%We will later
%want to compare $G \mcal$ with the category of $G$-equivariant
%orthogonal spectra (also defined in \cite{mm02}), thus
%the particular model structure that we use is the generalised
%cellular model structure on $G \mcal$. 

\begin{theorem}\label{thm:ekmmmodel}
For $G$ a compact Lie group, 
there is a cofibrantly generated, proper, closed
symmetric monoidal model structure on
$G \mcal$\index{G M@$G\mcal$}
with weak equivalences the $\pi_*$-isomorphisms.
Every object of this category is fibrant. 
\end{theorem}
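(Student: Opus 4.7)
The plan is to construct this model structure via Kan's recognition theorem for cofibrantly generated model categories. First I would identify a set $I$ of generating cofibrations, consisting of maps of the form $\sphspec \smashprod_{\mathbb{L}} \Sigma^\infty_V (G/H_+ \smashprod (S^{n-1}_+ \hookrightarrow D^n_+))$ as $H$ ranges over closed subgroups of $G$, $V$ over finite-dimensional $G$-representations, and $n \geqslant 0$, and a set $J$ of generating acyclic cofibrations built analogously from the cylinder inclusions $G/H_+ \smashprod (I^n_+ \hookrightarrow I^{n+1}_+)$. Fibrations are then defined as maps with the right lifting property against $J$, and $\pi_*$-isomorphisms have the two-out-of-three and retract properties by inspection of the long exact sequences of homotopy groups.

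The main verifications are threefold. First, both $I$ and $J$ have sequentially small domains (they are essentially of finite $G$-CW type), so the small object argument applies and provides the two factorisations. Second, every relative $J$-cell complex is a $\pi_*$-isomorphism; this is proved cell by cell using that each $G/H_+ \smashprod (I^n_+ \hookrightarrow I^{n+1}_+)$ is a strong equivariant deformation retract on every $K$-fixed point level, and then passing to transfinite compositions and pushouts via the stability of $\pi_*$-isomorphisms under these operations. Third, one must show that an $I$-injective map that is also a $\pi_*$-isomorphism is $J$-injective; this is Quillen's lemma, verified by an obstruction-theoretic lift against spheres. Step three is the principal obstacle.

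For properness, left properness reduces to showing pushouts of $\pi_*$-isomorphisms along relative $I$-cell complexes remain $\pi_*$-isomorphisms, handled by the five lemma applied to the long exact sequences of the relevant cofibre sequences (an equivariant cellular argument); right properness follows because $\pi_*$-isomorphisms are preserved under pullback along $J$-injective maps in a stable setting. The closed symmetric monoidal structure is already available on $G \mcal$ via the smash product $\smashprod_S$ of \cite{mm02}, so it remains only to verify the pushout product axiom of Definition \ref{def:pushmonoid}, which reduces to the combinatorics of products of orbit cells $G/H_+ \smashprod G/K_+$ and the known equivariant $G$-CW decomposition of such products.

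Finally, that every object is fibrant is the distinctive feature of the EKMM framework: the construction $\sphspec \smashprod_{\mathbb{L}} (-)$ via the linear isometries operad ensures that every $S$-module already encodes its structure maps as adjoint homeomorphisms in a suitable sense, so the right lifting property against $J$ is automatic and no $\Omega$-spectrum condition needs to be additionally imposed. This is precisely what makes $G \mcal$ convenient for our later model-categorical work, since no fibrant replacement is required when computing function spectra.
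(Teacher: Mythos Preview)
The paper does not give a proof of this statement at all: it simply records it as \cite[Chapter IV, Theorem 2.9]{mm02} and moves on. So there is nothing to compare your argument against within this paper; what you have written is an outline of the construction carried out in that reference.

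As such an outline, your sketch is broadly in the right spirit (generating sets built from equivariant cells, small object argument, pushout product axiom checked on orbit cells), but a couple of points are imprecise. First, your ``step three'' is misstated: in Kan's recognition theorem the nontrivial direction is that a map which is both a $\pi_*$-isomorphism and an $I$-cofibration lies in $J$-cof (equivalently, that $I$-injectives are automatically $\pi_*$-isomorphisms and $J$-injectives); the implication you wrote is the easy one. Second, your explanation of why every object is fibrant is more heuristic than accurate. The fibrancy in the EKMM framework is not because structure maps are adjoint homeomorphisms, but because the generating acyclic cofibrations are, at the point-set level, inclusions of deformation retracts, so the right lifting property against them holds for \emph{any} map to the terminal object; this is established carefully in \cite{EKMM97} and \cite{mm02}. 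For the purposes of this paper, the citation is all that is required.
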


Let $E$ be a $G$-spectrum 
and let $X$ be a $G$-space, then we have a graded abelian group
$[ \Sigma^\infty X,E]^G_*$. This is the set of graded maps 
from the suspension spectrum of $X$ to $E$ in the homotopy 
category of $G$-spectra. 
We can think of this as a functor $E^*$ 
from the homotopy category of $G$-spaces 
to the category of graded abelian groups. 
The isomorphism classes of such functors as $E$ varies
is the category of $G$-equivariant cohomology theories. 
One could equally well give a direct definition of a $G$-cohomology theory
(\cite[Chapter XIII, Definition 1.1]{may96}) and
then prove that the category of such objects
is equivalent to the homotopy category of $G$-equivariant spectra.
If $E^*(X)$ is a $\qq$-module for every space $X$, 
then we say that $E^*$ is a rational $G$-cohomology theory. 
A map $f$ is called a \textbf{rational $\pi_*$-isomorphism}
(also called a \textbf{rational equivalence} or a $\pi_*^\qq$-isomorphism) if
$\pi_n^H(f) \otimes \qq$ is an isomorphism for all integers $n$
and subgroups $H$ of $G$. We now give a
result that summarises \cite[Section 2.2]{barnes}.
The homotopy category of the following model category is the category of
rational $G$-equivariant cohomology theories. 

\begin{theorem}\label{thm:ratGspec}
There is a cofibrantly generated, proper, closed
symmetric monoidal model structure on
the category of $G$-equivariant $S$-modules
with weak equivalences the 
$\pi_*^\qq$-isomorphisms, we denote this model structure by
$G \mcal_\qq$. Maps in the homotopy category
of $G \mcal_\qq$ will be written $[X,Y]^G_\qq$
and these sets are always rational vector spaces.
The fibrant objects are precisely those spectra with rational homotopy groups. 
\end{theorem}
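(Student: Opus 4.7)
The plan is to construct $G\mcal_\qq$ as a left Bousfield localization of the model structure on $G\mcal$ provided by Theorem \ref{thm:ekmmmodel}. First I would produce a small set of maps $\mathcal{S}$ whose $\mathcal{S}$-local equivalences coincide with the rational $\pi_*$-isomorphisms. A natural choice is, for each conjugacy class $(H)$ and each integer $n \geqslant 2$, the map $G/H_+ \smashprod S \xrightarrow{\cdot n} G/H_+ \smashprod S$ (or equivalently the set of cofibres of multiplication-by-$n$ on each generator). Inverting these exactly kills torsion on $\pi_*^H$ for each $H$, so the $\mathcal{S}$-local objects are precisely the $G$-spectra whose homotopy groups $\pi_*^H(-)$ are rational vector spaces, and the $\mathcal{S}$-local equivalences are the rational $\pi_*$-isomorphisms.

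Once the set $\mathcal{S}$ is chosen, I would apply Hirschhorn's localization machinery. Since $G\mcal$ is a cellular, left proper, cofibrantly generated model category, the Bousfield localization at $\mathcal{S}$ exists and is again cofibrantly generated and left proper, with the same cofibrations as $G\mcal$. Identifying the fibrant objects requires checking that an $\mathcal{S}$-local, fibrant-in-$G\mcal$ object is precisely one whose homotopy groups are rational, which follows since $X$ is $\mathcal{S}$-local iff $[\mathrm{cone}(\cdot n), X]^G_* = 0$ for all $n$ and all generators, i.e.\ iff multiplication by $n$ is an isomorphism on $\pi_*^H(X)$ for every $H$ and $n$.

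The main obstacle is upgrading this to a closed symmetric monoidal model structure. For this I would verify a monoidal Bousfield localization criterion: that smashing a map in $\mathcal{S}$ with a cofibrant object of $G\mcal$ yields an $\mathcal{S}$-local equivalence. Because $\mathcal{S}$ consists of multiplication-by-$n$ maps (up to weak equivalence) on homotopical generators, this reduces to showing that $X \smashprod (\cdot n)$ is a rational $\pi_*$-isomorphism for every cofibrant $X$, which is immediate once one knows rational equivalences are preserved under smashing with cofibrants (e.g.\ via the cellular filtration, reducing to the generators, where it is clear). This gives the pushout product axiom for $G\mcal_\qq$; the closed structure is inherited from $G\mcal$ since cofibrations and the internal $\underhom$ are unchanged.

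Finally I would check right properness. Left properness is automatic from the Bousfield localization. For right properness, one uses that rational $\pi_*$-isomorphisms are stable under pullback along fibrations of $G\mcal_\qq$: such fibrations are in particular $G\mcal$-fibrations, and pulling back a rational equivalence along one of these yields a rational equivalence because rationalization $(-) \otimes \qq$ is exact and commutes with the long exact sequence of a fibration of $G$-spectra. Putting all this together gives the claimed properties, and the identification $[X,Y]^G_\qq$ of maps in the homotopy category as $\qq$-vector spaces is a direct consequence of the fact that every object becomes rational after fibrant replacement.
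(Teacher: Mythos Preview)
Your approach is correct in spirit and would work, but it differs from the paper's route. The paper does not invoke Hirschhorn's left Bousfield localization at a set of maps; instead it localizes at a single cofibrant \emph{spectrum}. Concretely, it builds a rational sphere $S^0_\mcal\qq$ as the cofibre of a map $\bigvee_i \cofrep S \to \bigvee_j \cofrep S$ realizing a free $\zz$-resolution of $\qq$, and then applies the $E$-model structure of \cite[Chapter~IV, Theorem~6.3]{mm02} with $E = S^0_\mcal\qq$. That theorem already delivers the cofibrantly generated, proper, closed symmetric monoidal model structure in one stroke, so no separate verification of the pushout product axiom or right properness is needed; the identification of fibrant objects as spectra with rational homotopy groups then follows from the general description of $E$-local objects. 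The paper simply cites \cite[Section~2.2]{barnes} for the details.

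Your argument trades this off for a more hands-on construction: you choose an explicit localizing set of maps and check the monoidal compatibility and right properness directly. This is fine, but note one point you should justify: you assert that $G\mcal$ is cellular in Hirschhorn's sense, which is what Hirschhorn's existence theorem requires, yet the paper only records that $G\mcal$ is cofibrantly generated and proper (Theorem~\ref{thm:ekmmmodel}). Cellularity does hold for EKMM-type categories, but it is not stated here, so you would need to supply a reference or argument. The paper's spectrum-localization approach sidesteps this entirely, since \cite{mm02} proves the $E$-model structure exists without appealing to Hirschhorn's framework.
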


Recall \cite[Chapter IV, Theorem 6.3]{mm02} which 
states that for a cofibrant spectrum $E \in G \mcal$, 
there is an \textbf{$E$-model structure} on the category of 
$G$-equivariant $S$-modules with the same cofibrations as before 
and weak equivalences those maps $f$ such that 
$f \smashprod \id_E$ is a $\pi_*$-isomorphism. 
This model structure is called the Bousfield 
localisation of $G \mcal$ at $E$ and is written $L_E G \mcal$,
the fibrant objects of this category are precisely
the $E$-local objects. Note that in terms of model categories
$L_{E \smashprod F} G \mcal = L_E L_F G \mcal = L_F L_E G \mcal$,
that is, the weak equivalences, cofibrations and fibrations are the same.

We construct $G \mcal_\qq$ by localising at $S^0_\mcal \qq$, 
a `rational sphere spectrum'.
This is a cofibrant spectrum such that the 
$S^0_\mcal \qq$-equivalences are the $\pi_*^\qq$-isomorphisms.
This spectrum is constructed as follows, using $\cofrep$ to
denote cofibrant replacement. 
Take $0 \to \oplus_i \zz \overset{f}{\to} \oplus_j \zz \to \qq \to 0$, 
a free resolution of $\qq$ as a $\zz$-module. 
Choose a map $g \colon \vee_i \cofrep S \to \vee_j \cofrep S$
such that $\pi_0^G(g)$ is given by 
$f \otimes \id \colon \oplus_i \zz \otimes A(G) \to \oplus_j \zz \otimes A(G)$.
The spectrum $S^0_\mcal \qq$ is then defined to 
be the cofibre of $g$. With these definitions we can now give
\cite[Theorem 3.2.4]{barnes}, which we will use to split the category of 
rational $G$-spectra into more manageable pieces. 

\begin{theorem}\label{thm:split}
Let $\{ E_i \}_{i\in I}$ be a finite collection of cofibrant orthogonal $G$-spectra or
$G$-spaces. If $E_i \smashprod E_j$ is rationally acyclic for $i \neq j$ and 
$\bigvee_{i \in I} E_i$ is rationally equivalent
to $S$, then we have a strong symmetric monoidal Quillen equivalence
\[
\Delta \colon G \mcal_\qq
\overrightarrow{\longleftarrow}
\prod_{i \in I} L_{E_i } G \mcal_\qq
\colon \prod.
\]
The left adjoint takes a $G$-spectrum $X$
to the constant collection of $X$ in each factor.
The right adjoint takes the collection
$\{ Y_i \}_{i \in I}$ to the $G$-spectrum $\prod_{i \in I} Y_i $.
\end{theorem}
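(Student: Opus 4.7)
The plan is first to verify that $(\Delta, \prod)$ is a Quillen adjunction of symmetric monoidal model categories, and then to upgrade it to a Quillen equivalence by analysing the stable triangulated structure on $\ho G \mcal_\qq$. The product model structure on $\prod_{i \in I} L_{E_i} G \mcal_\qq$ has fibrations, cofibrations and weak equivalences defined component-wise. Since Bousfield localisation preserves cofibrations and every $\pi_*^\qq$-isomorphism is automatically an $E_i$-equivalence for each $i$, the diagonal $\Delta$ preserves both cofibrations and acyclic cofibrations. Strong symmetric monoidality of $\Delta$ is immediate from the component-wise monoidal structure on the product: $\Delta(X \smashprod Y) = \Delta X \smashprod \Delta Y$, and $\Delta$ carries the unit $S$ to the unit $(S, \dots, S)$.

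For the Quillen equivalence, the key step is to show that $- \smashprod E_i$ realises the $E_i$-localisation functor on $\ho G \mcal_\qq$. Inverting $\iota \colon \bigvee_i E_i \to S$ in $\ho G \mcal_\qq$ and projecting produces maps $\pi_i \colon S \to E_i$, and the composites $e_i = \iota_i \pi_i$ are orthogonal idempotents on $S$ summing to $\id_S$. Smashing $\iota$ with $E_i$ and using that $E_j \smashprod E_i$ is rationally acyclic for $j \neq i$ gives a rational equivalence $E_i \smashprod E_i \simeq E_i$; hence the map $X \to X \smashprod E_i$ induced by $\pi_i$ becomes an equivalence after smashing with $E_i$ and so is an $E_i$-equivalence. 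The idempotents $e_i$ split $\ho G \mcal_\qq$ as a direct sum of orthogonal triangulated subcategories, the $i$-th being the essential image of $- \smashprod E_i$; an $E_i$-acyclic $C$ then decomposes under this splitting into summands indexed by $k \neq i$, and since morphisms between distinct summands vanish, $[C, X \smashprod E_i]^G_\qq = 0$. Thus $X \smashprod E_i$ is $E_i$-local, and $L_{E_i} X \simeq X \smashprod E_i$ in $\ho G \mcal_\qq$.

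With this identification, the derived unit on a cofibrant $X$ factors as $X \simeq X \smashprod \bigvee_i E_i \simeq \bigvee_i (X \smashprod E_i) \simeq \prod_i (X \smashprod E_i) \simeq \prod_i L_{E_i} X$, using that finite wedges agree with finite products in a stable category. The derived counit on a fibrant tuple $(Y_i)$ and component $j$ is the projection $\prod_i Y_i \to Y_j$; after smashing with $E_j$, fibrancy of each $Y_i$ in $L_{E_i} G \mcal_\qq$ gives $Y_i \simeq Y_i \smashprod E_i$ rationally, so the terms $Y_i \smashprod E_j \simeq Y_i \smashprod E_i \smashprod E_j$ for $i \neq j$ vanish, and the projection becomes a rational equivalence. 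The principal obstacle throughout is the $E_i$-locality of $X \smashprod E_i$; the idempotent splitting of $\ho G \mcal_\qq$ into orthogonal triangulated summands matching the $E_i$-local subcategories is the cleanest route, and follows formally once the global decomposition $X \simeq \bigoplus_i X \smashprod E_i$ is established for every $X$.
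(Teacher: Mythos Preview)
Your argument is correct. The paper does not actually prove this theorem here; it is quoted from \cite[Theorem 3.2.4]{barnes} and the only information given about the proof is the remark immediately following the statement, that a key step is to show that an $E_i$-local spectrum is $E_j$-acyclic for $i \neq j$.

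Your route via orthogonal idempotents in $[S,S]^G_\qq$ is essentially the same idea, organised slightly differently. You prove directly that $X \smashprod E_i$ is $E_i$-local and that $X \to X \smashprod E_i$ is an $E_i$-equivalence, so $- \smashprod E_i$ is the $E_i$-localisation; the paper's highlighted step (that $E_i$-local implies $E_j$-acyclic) is then the observation that anything in the image of $- \smashprod E_i$ is killed by $- \smashprod E_j$, since $E_i \smashprod E_j$ is rationally acyclic. Conversely, once one knows that $E_i$-local implies $E_j$-acyclic, the decomposition $X \simeq \bigvee_k X \smashprod E_k$ forces an $E_i$-local $X$ to satisfy $X \simeq X \smashprod E_i$. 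So the two formulations are equivalent.

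One small point worth making explicit: the vanishing of morphisms between distinct summands, which you invoke to get $E_i$-locality of $X \smashprod E_i$, is exactly the statement that the central idempotents $e_k$ act on $[A,B]^G_\qq$ compatibly from either side, so $[e_k A, e_i B]^G_\qq = e_k e_i [A,B]^G_\qq = 0$ for $k \neq i$. You have all the ingredients for this, but it would read more smoothly if you said so.
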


An important step in the proof of this theorem is 
showing that if $X$ is $E_i$-local then $E_j \smashprod X \to *$
is a rational equivalence whenever $i \neq j$. 
We will need this later and in fact this result can be deduced 
from the above theorem. 

Take $X$ an $E_i$-local $G$-spectrum, then the collection
$\{ X_i \}_{i \in I}$ defined by $X_i = X$ and $X_j = *$
whenever $i \neq j$, is a fibrant object of 
$\prod_{i \in I} L_{E_i } G \mcal_\qq$.
Since $\Delta$ preserves all weak equivalences and $(\Delta, \prod)$ 
is a Quillen equivalence, it follows 
that the counit is a weak equivalence. Hence $X$ is $E_j$-equivalent
to $*$ whenever $i \neq j$. 

The Burnside ring of $G$, $A(G)$, is the Grothendieck ring
of finite $G$-sets and is isomorphic to 
$[S,S]^G_*$. Since $G$ is finite, tom-Dieck's isomorphism 
(see \cite[Chapter V, Lemma 2.10]{lms86})
specifies an isomorphism 
$A(G) \otimes \qq \cong \prod_{(H)\leqslant G} \qq$.
Thus, for each conjugacy class of subgroups, $(H) \leqslant G$,
there is an idempotent $e_H \in A(G) \otimes \qq$ 
given by projection onto factor $(H)$.
Let $\fibrep_\qq$ denote fibrant replacement
in $G \mcal_\qq$, then 
$A(G) \otimes \qq \cong [\fibrep_\qq S, \fibrep_\qq S]^G$.
Given an idempotent $e$ in the rational Burnside ring 
we write $e S$ for the homotopy colimit (telescope) of 
$S \to \fibrep_\qq S \overset{f}{ \to} \fibrep_\qq S \overset{f}{ \to} \dots$,
for some representative $f$ of $e$. Using the diagram
$
X \to X \smashprod \fibrep_\qq S \overset{\id \smashprod f}{\longrightarrow} 
X \smashprod \fibrep_\qq S \overset{\id \smashprod f}{\longrightarrow} \dots
$
we construct $e X$ for any spectrum $X$.
The map $X \to X \smashprod \fibrep_\qq S$
is a $\pi_*^\qq$-isomorphism. Hence $e$ (or rather $\id_X \smashprod f$)
induces a self-map of $\pi_*^H(X) \otimes \qq$, we write this map as $\iota^*_H(e)_*$.
Homotopy groups and idempotents commute in the sense that the canonical map
$\iota^*_H(e)_* \pi_*^H(X) \otimes \qq \to \pi_*^H(e X) \otimes \qq$ 
is an isomorphism. 

\begin{definition}
For a group $G$, with subgroups $H$ and $K$,
we say that $K$ is \textbf{subconjugate}\index{Subconjugate}
to $H$ if the $G$-conjugacy class of $K$ contains
a subgroup of $H$, we write $K \leqslant_G H$. In turn $K$ is
\textbf{strictly subconjugate}\index{Strictly subconjugate}
to $H$ if the $G$-conjugacy class of $K$ contains
a strict subgroup of $H$, the notation for this is $K <_G H$.
\end{definition}

\begin{definition}A set of subgroups of $G$ is called a \textbf{family} if 
it is closed under conjugation and taking subgroups.
For each family $\fcal$ there is a $G$-CW complex 
$E \fcal$ which satisfies the universal property:
$E \fcal^H$ is contractible for $H \in \fcal$
and is empty otherwise. The cofibre of the 
projection map $E \fcal_+ \to S^0$ is denoted by 
$\widetilde{E} \fcal$.
\end{definition}

Take $H$ a subgroup of $G$, then we have a pair of families
of subgroups of $G$:
$[\leqslant_G H]$ -- the family of all subgroups of $G$
which are subconjugate to $H$
and $[<_G H]$ -- the family of all subgroups of $G$
which are strictly subconjugate to $H$.
We can then form $G$-CW complexes
$E[\leqslant_G H]_+$\index{E [ H]@$E[\leqslant_G H]_+$}
and $E[<_G H]_+$\index{E [ H]@$E[<_G H]_+$}.
There is a map $E[<_G H]_+ \to E[\leqslant_G H]_+$, we call the cofibre
of this map $E\langle H\rangle $\index{E \lange H \rangle @$E \langle H \rangle$}.

Note that since $E[<_G H]_+$ and $E[\leqslant_G H]_+$
are cofibrant as $G$-spaces, the space $E\langle H\rangle$
is also cofibrant as a $G$-space. We can also describe $E\langle H\rangle$
as $E[\leqslant_G H]_+ \smashprod \widetilde{E} [<_G H]$.
Since geometric fixed point functors preserve cofibre sequences, the spectrum
$\Phi^K (\Sigma^\infty E\langle H\rangle)$ is contractible unless
$(K)=(H)$, whence it is non-equivariantly
rationally equivalent to $S$.
The following is a standard result proven by looking at geometric fixed points
(see \cite[Lemma 3.4.11]{barnes}).

\begin{lemma}
Let $e_{[\leqslant_G H]} = \Sigma_{(K) \leqslant H} e_K$ and
$e_{[<_G H]} = \Sigma_{(K) < H} e_K$.
Then there are zig-zags of rational 
$\pi_*$-isomorphisms between $E [\leqslant_G H]_+$ and
$e_{[\leqslant_G H]} S$ and similarly so for $E [<_G H]_+$ and
$e_{[<_G H]} S$. Furthermore $E \langle H \rangle$ is rationally
equivalent to $e_H S$.
\end{lemma}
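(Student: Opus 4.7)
The plan is to detect rational equivalences via geometric fixed points. For a finite group $G$, the rational tom Dieck splitting implies that a map of $G$-spectra is a rational $\pi_*$-isomorphism if and only if $\Phi^K$ of the map is a rational equivalence of non-equivariant spectra for every conjugacy class $(K) \leqslant G$. Each of the three claims then reduces to computing geometric fixed points on both sides and producing a zig-zag of comparison maps whose (co)fibres become trivial after every $\Phi^K$.

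First I would collect the relevant geometric fixed points. The universal property of $E \fcal$ gives $\Phi^K(E\fcal_+) \simeq S$ when $K \in \fcal$ and $\simeq *$ otherwise, so $\Phi^K(E[\leqslant_G H]_+)$ is non-trivial exactly when $(K) \leqslant H$, $\Phi^K(E[<_G H]_+)$ exactly when $(K) <_G H$, and (as already observed in the excerpt) $\Phi^K(E\langle H\rangle)$ exactly when $(K) = (H)$. On the algebraic side, tom Dieck's rational isomorphism $A(G) \otimes \qq \cong \prod_{(K)} \qq$ is implemented by the family of maps $\Phi^K$, so $\Phi^K(e_J) = \delta_{(K),(J)}$. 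Since $\Phi^K$ is strong monoidal and commutes with the telescope used to define $e_J S$, this gives $\Phi^K(e_J S)$ rationally equivalent to $S$ when $(K) = (J)$ and trivial otherwise, and summing identifies $\Phi^K(e_{[\leqslant_G H]} S)$ and $\Phi^K(e_{[<_G H]} S)$ with $\Phi^K(E[\leqslant_G H]_+)$ and $\Phi^K(E[<_G H]_+)$ on every conjugacy class.

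With the geometric fixed points matched, I would build explicit zig-zags by smashing with the appropriate universal space. For the third claim, consider
\[
E\langle H\rangle \longrightarrow e_H S \smashprod E\langle H\rangle \longleftarrow e_H S \smashprod E[\leqslant_G H]_+ \longrightarrow e_H S,
\]
where the outer maps come from the unit $S \to e_H S$ and the collapse $E[\leqslant_G H]_+ \to S^0$, and the middle one from the cofibre map $E[\leqslant_G H]_+ \to E\langle H\rangle$. Each (co)fibre has the form $e_H S \smashprod W$ with $W \in \{\widetilde{E}[\leqslant_G H],\ E[<_G H]_+\}$, and its $\Phi^K$ vanishes for every $(K)$ on pure support grounds: $(K) = (H)$ is incompatible with $(K) \not\leqslant H$ and with $(K) <_G H$. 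So each arrow is a rational $\pi_*$-isomorphism. The same template, with $e_H$ replaced by $e_{[\leqslant_G H]}$ or $e_{[<_G H]}$ and $E\langle H\rangle$ replaced by $E[\leqslant_G H]_+$ or $E[<_G H]_+$, produces the zig-zags for the first two statements.

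The main place to be careful is the idempotent step: one must verify that $\Phi^K$ really picks out the $(K)$-factor of tom Dieck's isomorphism and interacts correctly with the telescope defining $e_J S$, so that $\Phi^K(e_J S)$ has the advertised form. Both facts are standard, following from strong monoidality of $\Phi^K$ and commutation with sequential homotopy colimits, but they are essential; once they are in hand the remaining work is bookkeeping of conjugacy-class supports, and the three zig-zags assemble to the stated rational equivalences.
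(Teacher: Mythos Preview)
Your proposal is correct and follows exactly the approach the paper indicates: the paper's proof consists only of the sentence ``a standard result proven by looking at geometric fixed points'' together with a reference, and you have supplied precisely those details, computing $\Phi^K$ on both sides and exhibiting explicit zig-zags whose cofibres vanish after every $\Phi^K$.
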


From this it follows that a map $f \colon X \to Y$ in $G \mcal_\qq$ 
is a rational $E \langle H \rangle$-equivalence
if and only if $e_H f \colon e_H X \to e_H Y$ is a rational equivalence.
We can now apply the splitting theorem using the
set of objects $E \langle H \rangle$ as $H$ runs over a set of representatives
for the conjugacy classes of subgroups of $G$. 
Since $E \langle H \rangle$ is rationally
equivalent to $e_H S$ it follows that $\bigvee_{(H) \leqslant G} E \langle H \rangle$
is rationally equivalent to $S$ and $E \langle H \rangle \smashprod E \langle K \rangle$
is rationally acyclic whenever $H$ and $K$ are not conjugate.

\begin{corollary}\label{cor:finitesplitting}
There is a strong symmetric monoidal Quillen equivalence between the category
of rational $G$-spectra and the product of the categories
$L_{E \langle H \rangle} G \mcal_\qq$, as $H$ runs over the set of conjugacy
classes of subgroups of $G$.
\[
\Delta \colon G \mcal_\qq
\overrightarrow{\longleftarrow}
\prod_{(H) \leqslant G} L_{E \langle H \rangle} G \mcal_\qq
\colon \prod
\]
\end{corollary}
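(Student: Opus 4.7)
The plan is to deduce this as a direct application of the splitting theorem (Theorem \ref{thm:split}) to the finite collection $\{ E \langle H \rangle \}$, indexed over conjugacy classes of subgroups of $G$. Since $G$ is finite, this collection is finite, which is the first hypothesis.

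Next I would verify the two substantive hypotheses of Theorem \ref{thm:split}. For cofibrancy: the text observes that $E[<_G H]_+ \to E[\leqslant_G H]_+$ is a map of cofibrant $G$-CW complexes, whence its cofibre $E \langle H \rangle$ is cofibrant as a $G$-space, as required. For the wedge and smash conditions, I would use the preceding lemma, which provides a zig-zag of rational $\pi_*$-isomorphisms between $E \langle H \rangle$ and $e_H S$. Combining this with the tom-Dieck isomorphism $A(G) \otimes \qq \cong \prod_{(H) \leqslant G} \qq$, under which the $e_H$ form a complete set of orthogonal idempotents summing to the identity, gives the two rational equivalences
\[
\bigvee_{(H) \leqslant G} E \langle H \rangle \simeq_\qq \bigvee_{(H) \leqslant G} e_H S \simeq_\qq \Bigl( \sum_{(H)} e_H \Bigr) S = S
\]
and, for $(H) \neq (K)$,
\[
E \langle H \rangle \smashprod E \langle K \rangle \simeq_\qq e_H S \smashprod e_K S \simeq_\qq (e_H e_K) S \simeq_\qq *.
\]
The small technical point here is that smashing with $S$ preserves rational equivalences and that multiplication of idempotents in $A(G) \otimes \qq$ corresponds under these equivalences to smashing the telescopes; this is exactly the content of the remark preceding the corollary, so no extra work is needed.

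Having verified all hypotheses, Theorem \ref{thm:split} immediately supplies the strong symmetric monoidal Quillen equivalence
\[
\Delta \colon G \mcal_\qq \overrightarrow{\longleftarrow} \prod_{(H) \leqslant G} L_{E \langle H \rangle} G \mcal_\qq \colon \prod,
\]
with the adjoint pair described exactly as in the statement of Theorem \ref{thm:split}. There is no real obstacle here: the bulk of the work (constructing the $E \langle H \rangle$, identifying them up to rational equivalence with the $e_H S$, and proving the splitting theorem itself) has already been done, and the corollary is essentially a bookkeeping exercise showing that the hypotheses of Theorem \ref{thm:split} are met by this particular family.
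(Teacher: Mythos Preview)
Your proposal is correct and follows essentially the same approach as the paper: the paper likewise notes that since $E\langle H\rangle$ is rationally equivalent to $e_H S$, the wedge $\bigvee_{(H)} E\langle H\rangle$ is rationally equivalent to $S$ and the smash $E\langle H\rangle \smashprod E\langle K\rangle$ is rationally acyclic for $(H)\neq(K)$, and then applies Theorem~\ref{thm:split}. You supply a bit more detail (cofibrancy, the explicit idempotent computation), but the argument is the same.
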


\begin{lemma}\label{lem:modelequality}
There is an equality of model structures:
\[
L_{E \langle H \rangle} G \mcal_\qq =
L_{E \langle H \rangle} L_{E [ \leqslant_G H]_+ } G \mcal_\qq 
\]
that is to say, the weak equivalences, cofibrations and fibrations
agree.
\end{lemma}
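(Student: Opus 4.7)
The plan is to reduce the equality of model structures to a single rational equivalence of spectra. Since a Bousfield localisation has the same cofibrations as the underlying model category, both $L_{E \langle H \rangle} G \mcal_\qq$ and $L_{E \langle H \rangle} L_{E [ \leqslant_G H]_+ } G \mcal_\qq$ have the cofibrations of $G \mcal_\qq$. If I can show the weak equivalences agree as well, then the fibrations also agree by the right-lifting characterisation, and the two model structures are identical.

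Next I would invoke the standard iterated localisation identity $L_E L_F \ccal = L_{E \smashprod F} \ccal$ noted just after Theorem \ref{thm:ratGspec}, so that the right-hand side becomes $L_{E \langle H \rangle \smashprod E[\leqslant_G H]_+} G \mcal_\qq$. It then suffices to show that the classes of $E \langle H \rangle$-equivalences and $E \langle H \rangle \smashprod E[\leqslant_G H]_+$-equivalences coincide inside $G \mcal_\qq$. Since a rational $X$-equivalence is by definition a map $f$ with $f \smashprod \cofrep X$ a $\pi_*^\qq$-isomorphism, this will hold provided $E \langle H \rangle$ and $E \langle H \rangle \smashprod E[\leqslant_G H]_+$ are rationally equivalent as $G$-spectra.

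To obtain this equivalence I would use the preceding lemma identifying $E\langle H\rangle$ with $e_H S$ and $E[\leqslant_G H]_+$ with $e_{[\leqslant_G H]} S = \sum_{(K) \leqslant H} e_K S$ up to rational $\pi_*$-isomorphism. Smashing and using that the $e_K$ are mutually orthogonal idempotents in $A(G) \otimes \qq$, the product $e_H \cdot e_{[\leqslant_G H]}$ collapses to $e_H$ because $(H)$ is a term of $e_{[\leqslant_G H]}$. Hence
\[
E \langle H \rangle \smashprod E[\leqslant_G H]_+ \simeq e_H e_{[\leqslant_G H]} S \simeq e_H S \simeq E \langle H \rangle
\]
rationally, as required.

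The main obstacle I anticipate is the bookkeeping needed to turn the idempotent-level identity into a genuine rational equivalence of cofibrant spectra: one has to pass through cofibrant replacements and check that smashing with a cofibrant representative of $E \langle H \rangle$ (respectively $E[\leqslant_G H]_+$) preserves rational $\pi_*$-isomorphisms so that the telescope descriptions of $e_H S$ and $e_{[\leqslant_G H]} S$ can be smashed together safely. This is essentially the content of the monoid/pushout-product properties of $G \mcal_\qq$ (Theorem \ref{thm:ratGspec}), so it should be routine once the cofibrant replacements are named, but it is the only nontrivial verification in the argument.
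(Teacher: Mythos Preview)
Your proposal is correct and follows the same overall shape as the paper's proof: both note that the cofibrations agree by definition, invoke $L_E L_F = L_{E \smashprod F}$, and reduce the question to showing that $E\langle H\rangle \smashprod E[\leqslant_G H]_+$ is rationally equivalent to $E\langle H\rangle$. The difference lies in how that last equivalence is established. You pass through the idempotent descriptions $E\langle H\rangle \simeq e_H S$ and $E[\leqslant_G H]_+ \simeq e_{[\leqslant_G H]} S$ and then use the Burnside-ring identity $e_H \cdot e_{[\leqslant_G H]} = e_H$; as you yourself flag, this leaves some bookkeeping to convert the idempotent identity into an honest rational equivalence of smash products of telescopes. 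The paper instead argues geometrically: it smashes the defining cofibre sequence $E[\leqslant_G H]_+ \to S^0 \to \widetilde{E}[\leqslant_G H]$ with $E\langle H\rangle$ and observes that $\widetilde{E}[\leqslant_G H] \smashprod E\langle H\rangle$ is rationally contractible (immediate from $E\langle H\rangle = E[\leqslant_G H]_+ \smashprod \widetilde{E}[<_G H]$ together with $E\fcal_+ \smashprod \widetilde{E}\fcal \simeq *$), so the map $E[\leqslant_G H]_+ \smashprod E\langle H\rangle \to E\langle H\rangle$ is a rational equivalence. This sidesteps all of the telescope and cofibrant-replacement tracking you anticipated, at the cost of requiring one to recall the universal-space cofibre sequence; your route has the virtue of making the underlying algebra transparent.
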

\begin{proof}
The cofibrations of these two model structures agree by definition.
The map $\widetilde{E}  [ \leqslant_G H] \to *$ is a rational
$E \langle H \rangle$ equivalence.
Hence, considering the cofibre sequence
which defines $\widetilde{E}  [ \leqslant_G H]$
we have a rational equivalence
$E [ \leqslant_G H]_+ \smashprod E \langle H \rangle \to E \langle H \rangle.$
It follows that a rational
$E [ \leqslant_G H]_+ \smashprod E \langle H \rangle$-equivalence
is a rational $E \langle H \rangle$-equivalence. So the weak equivalences
of $L_{E \langle H \rangle} G \mcal_\qq$ and
$L_{E \langle H \rangle} L_{E [ \leqslant_G H]_+ } G \mcal_\qq $
agree. 
\end{proof}

By \cite[IV, Proposition 6.7]{mm02}, the weak equivalences of
$L_{E [ \leqslant_G H]_+ } G \mcal_\qq $
are those maps $f$ such that $\pi_*^K(f) \otimes \qq$
is an isomorphism for all $K \leqslant_G H$.

\begin{lemma}\label{lem:WElocal}
A map $f$ in $L_{E \langle H \rangle} G \mcal_\qq $
is a weak equivalence if and only if the induced map of homotopy groups
$\iota_H^*(e_H)_* \pi_*^H(f) \otimes \qq$ is a isomorphism.
Hence $G/H_+$
is a compact generator for $L_{E \langle H \rangle} G \mcal_\qq $.
\end{lemma}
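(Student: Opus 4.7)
The plan is to reduce the characterization of weak equivalences to a single condition at $K=H$ by means of a computation in the rational Burnside ring, and then read off the generator statement. First I would invoke Lemma \ref{lem:modelequality} to replace $L_{E\langle H\rangle}G\mcal_\qq$ by the iterated localization $L_{E\langle H\rangle}L_{E[\leqslant_G H]_+}G\mcal_\qq$. By the standard description of Bousfield localizations, $f$ is a weak equivalence of the latter iff $f \smashprod E\langle H\rangle$ is a weak equivalence of $L_{E[\leqslant_G H]_+}G\mcal_\qq$; combined with the description of those weak equivalences recalled just after Lemma \ref{lem:modelequality}, this amounts to $\pi_*^K(f \smashprod E\langle H\rangle) \otimes \qq$ being an isomorphism for every $K \leqslant_G H$. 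Using the rational equivalence $E\langle H\rangle \simeq_\qq e_H S$ together with the formula $\pi_*^K(e_H X) \otimes \qq \cong \iota_K^*(e_H)_* \pi_*^K(X) \otimes \qq$ from the preamble, this rewrites as: $\iota_K^*(e_H)_*\pi_*^K(f) \otimes \qq$ is an isomorphism for all $K \leqslant_G H$.

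The next step is to show that all these conditions except the one at $K=H$ are vacuous. For this I would use the marks embedding $A(K) \otimes \qq \hookrightarrow \prod_{(M)_K \leqslant K} \qq$, which is compatible with $\iota_K^*$ in the sense that the $(M)_K$-mark of $\iota_K^*(e_H)$ coincides with the mark of $e_H$ at $M$ viewed as a subgroup of $G$, and the latter is $\delta_{(M)_G,(H)_G}$ by definition of $e_H$. If $K <_G H$ strictly, then every $M \leqslant K$ satisfies $|M| \leqslant |K| < |H|$, so $(M)_G \neq (H)_G$; hence all marks vanish and $\iota_K^*(e_H) = 0$. Only the condition at $K=H$ survives, which is the stated criterion.

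For the generator claim I would proceed as follows. Any fibrant object $Y$ of $L_{E\langle H\rangle}G\mcal_\qq$ is rational and $E\langle H\rangle$-local, so the natural map $Y \to e_H Y$ is a rational equivalence; consequently $\iota_H^*(e_H)$ acts as the identity on $\pi_n^H(Y) \otimes \qq$, and since $G/H_+$ is cofibrant and $Y$ is fibrant, $[\Sigma^n G/H_+, Y]^G_\qq \cong \pi_n^H(Y) \otimes \qq$. Thus if $[\Sigma^n G/H_+, X]^G_\qq = 0$ for every $n$, then passing to a fibrant replacement of $X$ forces $\iota_H^*(e_H)_*\pi_n^H(X) \otimes \qq = 0$ for every $n$, and the first part of the lemma gives $X \simeq *$ in $L_{E\langle H\rangle}G\mcal_\qq$. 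Compactness of $G/H_+$ is inherited from its compactness in the unlocalized category $G\mcal$, since $\pi_*^H$ commutes with wedges there and this property survives rationalization and post-composition with the idempotent $\iota_H^*(e_H)$. The one genuine computation, namely the Burnside-ring vanishing $\iota_K^*(e_H) = 0$ for $K <_G H$, is where the work happens; the remainder is routine assembly of Bousfield localization and idempotent-splitting facts.
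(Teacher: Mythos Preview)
Your proposal is correct and follows essentially the same route as the paper's proof: use Lemma~\ref{lem:modelequality} to reduce to the condition that $\iota_K^*(e_H)_*\pi_*^K(f)\otimes\qq$ be an isomorphism for all $K\leqslant_G H$, then kill all $K<_G H$ by the Burnside-ring vanishing $\iota_K^*(e_H)=0$, and finally identify $e_H\pi_*^H(X)\otimes\qq$ with maps out of $G/H_+$ in the localized homotopy category via fibrant replacement. Your write-up is more explicit than the paper's in two places---you unpack the iterated localization as ``$f\smashprod E\langle H\rangle$ is a weak equivalence in $L_{E[\leqslant_G H]_+}G\mcal_\qq$'' and you justify $\iota_K^*(e_H)=0$ via the marks embedding rather than merely asserting it---but these are elaborations of the same argument, not a different approach.
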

\begin{proof}
Lemma \ref{lem:modelequality} shows that $f$
is a weak equivalence if and only if
$\iota_K^*(e_H)_* \pi_*^K(f) \otimes \qq$ is an isomorphism for
all $K \leqslant_G H$.
For any $G$-spectrum $X$, the set $\pi_*^K(X) \otimes \qq$ 
is a module over $A(K) \otimes \qq$. 
The rational Burnside ring of $G$ acts 
on $\pi_*^K(X) \otimes \qq$ via the restriction map
$\iota_K^* \colon A(G) \otimes \qq \to A(K) \otimes \qq$
induced from the inclusion $\iota \colon K \to G$.
Now note that if $K$ is a strict subgroup of $H$ then
$\iota_K^*(e_H)=0$, hence for any map $f$,
$\iota_K^*(e_H)_* \pi_*^K(f) \otimes \qq$ will be an isomorphism. 
This proves the first statement. 

For any $G$-spectrum $X$, $e_H \pi_*^H (X) \otimes \qq$  
is isomorphic to $\pi_*^H(\fibrep_H X)$, 
where $\fibrep_H X$ is the fibrant replacement of
$X$ in $L_{E \langle H \rangle} G \mcal_\qq$, 
hence $\fibrep_H X$ is $S^0_\mcal \qq \smashprod E \langle H \rangle$-local.
In turn, $\pi_*^H(\fibrep_H X)  \cong [G/H_+,\fibrep_H X]^G_*$
which is isomorphic to 
$([G/H_+, X]^{G|H}_\qq)_*$, the set of graded maps in the homotopy category of 
$L_{E \langle H \rangle} G \mcal_\qq$.
We have already shown that $X$ is trivial in 
$\ho L_{E \langle H \rangle} G \mcal_\qq $ if and only if
$e_H \pi_*^H (X) \otimes \qq =0$. Now we know that 
$X$ is trivial in 
$\ho L_{E \langle H \rangle} G \mcal_\qq $ if and only if
$([G/H_+, X]^{G|H}_\qq)_* =0$, hence $G/H_+$ is
 a generator, it is compact since $G/H_+$
is a compact space. 
\end{proof}

Our next task is to obtain a version of 
$L_{E \langle H \rangle} G \mcal_\qq $
with every object 
fibrant (see Remark \ref{rmk:whyfibrant}).

\begin{lemma}\label{lem:SHobject}
There is an $S^0_\mcal \qq \smashprod E \langle H \rangle$-local
commutative $S$-algebra
$S_H$\index{SH@$S_H$} whose
unit map is a rational $E \langle H \rangle$-equivalence.
Furthermore every $S_H$-module is
$S^0_\mcal \qq \smashprod E \langle H \rangle$-local.
\end{lemma}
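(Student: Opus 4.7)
The plan is to produce $S_H$ as a Bousfield localization $L_E S$ of the sphere spectrum in the category of commutative $G$-equivariant $S$-algebras, where $E = S^0_\mcal \qq \smashprod E \langle H \rangle$. Such lifts of Bousfield localization from $G \mcal$ to commutative $S$-algebras are available by standard equivariant EKMM machinery. By construction, the resulting commutative $S$-algebra $S_H$ is $E$-local and the unit $S \to S_H$ is an $E$-equivalence, which is precisely a rational $E \langle H \rangle$-equivalence.

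For the statement that every $S_H$-module is $E$-local, I would exploit the fact that this $E$-localization is smashing: $L_E X \simeq S_H \smashprod X$ naturally in the $G$-spectrum $X$. Since $E \langle H \rangle$ is rationally equivalent to $e_H S$, which is constructed as the telescope associated to the idempotent $e_H \in A(G) \otimes \qq$, and since telescopes of self-maps commute with smash products up to weak equivalence, this smashing property follows. In particular, $S_H \smashprod X$ is $E$-local for every $X$. Now given an $S_H$-module $M$ with action $\mu \colon S_H \smashprod M \to M$ and unit $\eta \colon S \to S_H$, the unit axiom $\mu \circ (\eta \smashprod \id_M) = \id_M$ exhibits $M$ as a retract of $S_H \smashprod M$ in $\ho G \mcal$. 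Since $E$-locality is characterised by the vanishing of $[W, -]^G$ for $W$ an $E$-acyclic spectrum and this condition is plainly preserved under retracts, $M$ is itself $E$-local.

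The main obstacle will be identifying the abstractly produced $L_E S$ with a strictly commutative $S$-algebra that realises the telescope description of $e_H$: a priori the telescope model carries only an $A_\infty$ structure, so compatibility of the commutative lift with the smashing/telescopic picture requires care within the equivariant EKMM framework. Once this identification is secured, the retract argument closes the proof cleanly.
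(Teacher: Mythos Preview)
Your construction of $S_H$ via EKMM localisation in commutative $S$-algebras is exactly what the paper does (it cites \cite[Chapter VIII, Theorem 2.2]{EKMM97}).

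For the second claim your route differs from the paper's. You argue via smashing: $L_E$ is smashing because it is built from an idempotent telescope and rationalisation, so $S_H \smashprod M \simeq L_E M$ is $E$-local, and then the retract $M \to S_H \smashprod M \to M$ finishes. The paper instead proves the stronger statement that $S_H$ is $\pi_*$-isomorphic to $E = S^0_\mcal \qq \smashprod E\langle H\rangle$ itself: it uses the rational decomposition of $S$ as $\bigvee_{(K)} E\langle K\rangle$ together with the vanishing of $S_H \smashprod E\langle K\rangle$ for $(K) \neq (H)$ (from the splitting theorem) to obtain $S_H \simeq S_H \smashprod E\langle H\rangle \simeq E$. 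Once $S_H \simeq E$, Adams' argument \cite[13.1]{adams}---which \emph{is} your retract argument---gives that $S_H$-modules are $S_H$-local, hence $E$-local. The paper's identification $S_H \simeq E$ is reused later (in the proof of Lemma~\ref{lem:hogroupcalc}), so its route also produces a concrete model for $S_H$; your smashing argument is correct and self-contained for the lemma as stated but does not record that extra fact.

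Your ``main obstacle'' is not an obstacle. You never need the telescope to be a strictly commutative algebra, nor to identify it with $S_H$ as such. Your smashing argument only requires that the underlying spectrum of $S_H$ represent $L_E S$ in $\ho G \mcal$, and this is immediate: by construction $S \to S_H$ is an $E$-equivalence with $E$-local target, which characterises $L_E S$ up to weak equivalence. The smashing formula $L_E X \simeq L_E S \smashprod X$ is purely a statement in the homotopy category of spectra and needs nothing multiplicative from the telescope.
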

\begin{proof}
This result is an application of 
\cite[Chapter VIII, Theorem 2.2]{EKMM97}
which is easily adapted to an equivariant setting. 
We use the cell object $S^0_\mcal \qq \smashprod E \langle H \rangle$
to create a commutative cell $S$-algebra $S_H$ which is the
$S^0_\mcal \qq \smashprod E \langle H \rangle$-localisation of $S$.
By construction, the unit map $S \to S_H$ is a
rational $E \langle H \rangle$-equivalence, hence
$S^0_\mcal \qq \smashprod E \langle H \rangle$ is $\pi_*$-isomorphic
to $S_H \smashprod S^0_\mcal \qq \smashprod E \langle H \rangle$.
Since $S_H$ is $S^0_\mcal \qq$-local, it has rational homotopy groups,
thus there is a
zig-zag of weak equivalences
$S^0_\mcal \qq \smashprod S_H \leftarrow \cofrep S \smashprod S_H \to S_H$.
Equally $S_H$ is weakly equivalent to
$S_H \smashprod \bigvee_{(K)} E \langle K \rangle$.
Since $S_H$ is $E \langle H \rangle$-local,
$S_H \smashprod E \langle K \rangle$ is acyclic
whenever $(H) \neq (K)$ (as noted above, this is part of the proof of
Theorem \ref{thm:split}).
It follows that $S_H \smashprod \bigvee_{(K)} E \langle K \rangle$
is weakly equivalent to $S_H \smashprod E \langle H \rangle$.
Thus $S_H$ is $\pi_*$-isomorphic to $S^0_\mcal \qq \smashprod E \langle H \rangle$.
The rest of the result is standard, see
\cite[13.1]{adams}.
\end{proof}

\begin{proposition}\label{prop:GspecHtoSHmod}
The adjoint pair of the free $S_H$-module functor
and the forgetful functor
\[
S_H \smashprod (-) \colon L_{E \langle H \rangle} G \mcal_\qq
\overrightarrow{\longleftarrow}
S_H \leftmod \colon U 
\]
is a strong symmetric monoidal Quillen equivalence.
\end{proposition}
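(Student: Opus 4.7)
The plan is to verify in turn that $(S_H \smashprod (-), U)$ is strong symmetric monoidal, that it is a Quillen adjunction, and finally that it is a Quillen equivalence. Strong symmetric monoidality is essentially formal: since $S_H$ is a commutative $S$-algebra, the canonical map
\[
(S_H \smashprod X) \smashprod_{S_H} (S_H \smashprod Y) \longrightarrow S_H \smashprod (X \smashprod Y)
\]
induced by the multiplication $S_H \smashprod S_H \to S_H$ is an isomorphism, and the unit $S$ of $L_{E\langle H\rangle} G\mcal_\qq$ is sent to $S_H$, the unit of $S_H \leftmod$. The coherence axioms follow from those of $G \mcal$.

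For the Quillen pair condition, I would check that $U$ preserves fibrations and weak equivalences. The model structure on $S_H \leftmod$ is created by the forgetful functor, so fibrations and weak equivalences of $S_H$-modules are, respectively, fibrations and $\pi_*$-isomorphisms in $G \mcal$. By Lemma \ref{lem:SHobject} every $S_H$-module is $S^0_\mcal\qq \smashprod E\langle H\rangle$-local; together with Theorem \ref{thm:ekmmmodel} this says every $S_H$-module is fibrant in $L_{E\langle H\rangle} G\mcal_\qq$, and a $G\mcal$-fibration between objects fibrant in the localisation remains a fibration in the localisation. That $U$ preserves weak equivalences is automatic, since any $\pi_*$-isomorphism is in particular a rational $E\langle H\rangle$-equivalence.

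For the Quillen equivalence, take $X$ cofibrant in $L_{E\langle H\rangle} G\mcal_\qq$ and $M$ a fibrant $S_H$-module. A map $f \colon X \to UM$ factors as $X \xrightarrow{\eta_X} S_H \smashprod X \xrightarrow{\tilde f} M$, where $\tilde f$ is the adjoint of $f$ and $\eta_X = \eta \smashprod \id_X$ with $\eta \colon S \to S_H$ the unit. By Lemma \ref{lem:SHobject} the map $\eta$ is a rational $E\langle H\rangle$-equivalence, and since $X$ is cofibrant and the localised model structure is monoidal (coming from Theorem \ref{thm:ratGspec}) the map $\eta_X$ is also a rational $E\langle H\rangle$-equivalence. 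Two-out-of-three then implies $f$ is a rational $E\langle H\rangle$-equivalence if and only if $\tilde f$ is one. Since $S_H \smashprod X$ and $M$ are both $S^0_\mcal\qq \smashprod E\langle H\rangle$-local, a rational $E\langle H\rangle$-equivalence between them coincides with a $\pi_*$-isomorphism (local equivalences between local objects are genuine weak equivalences), i.e. a weak equivalence in $S_H \leftmod$.

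The main obstacle is the locality bookkeeping in the last paragraph: one must know that smashing with the cofibrant object $X$ preserves the rational $E\langle H\rangle$-equivalence $\eta$, and that between $E\langle H\rangle \smashprod S^0_\mcal \qq$-local $S_H$-modules the local equivalences and the $\pi_*$-isomorphisms agree. Both are standard consequences of the monoidal model structure together with the characterisation of local objects in Lemma \ref{lem:SHobject}, but they are what makes the Quillen equivalence genuinely reflect the passage between the localisation and $S_H$-modules, rather than being a purely formal statement about the free-forgetful adjunction.
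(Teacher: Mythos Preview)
Your proposal is correct and follows essentially the same approach as the paper's proof, which simply records the two key facts you identify: that the unit $S \to S_H$ is a rational $E\langle H\rangle$-equivalence, and that an $S^0_\mcal\qq \smashprod E\langle H\rangle$-equivalence between $S^0_\mcal\qq \smashprod E\langle H\rangle$-local objects is a $\pi_*$-isomorphism. Your write-up is a faithful and more detailed unpacking of these points.
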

\begin{proof}
This is easy to prove, the two points to note are: an
$S^0_\mcal \qq \smashprod E \langle H \rangle$-equivalence
between $S^0_\mcal \qq \smashprod E \langle H \rangle$-local objects
is a $\pi_*$-isomorphism and the unit map of $S_H$ is
an $S^0_\mcal \qq \smashprod E \langle H \rangle$-equivalence.
\end{proof}

We now fix a cofibrant replacement of the suspension
spectrum of $G/H_+$. We call this 
$\cofrep G/H_+$. One example is given by 
$S \smashprod_{\mathscr{L}} \mathbb{L} \Sigma^\infty G/H_+$,
see \cite[Chapter IV, Proposition 2.1]{mm02}.

\begin{lemma}
The object $(\cofrep G/H_+) \smashprod S_H$
is a $G$-compact, cofibrant and fibrant generator
of $S_H \leftmod$.
\end{lemma}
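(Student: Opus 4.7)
The plan is to derive each of the four required properties of $(\cofrep G/H_+) \smashprod S_H$ from the Quillen equivalence of Proposition \ref{prop:GspecHtoSHmod}, applied to what is already known about $G/H_+$ as a compact generator of $L_{E \langle H \rangle} G \mcal_\qq$ by Lemma \ref{lem:WElocal}. In particular, nothing new about $S_H$-modules needs to be built from scratch; everything should follow by transporting properties of the image of $G/H_+$ under the left Quillen functor $S_H \smashprod (-)$.

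For cofibrancy, $\cofrep G/H_+$ is cofibrant in $G\mcal_\qq$ and hence in $L_{E \langle H \rangle} G \mcal_\qq$, whose cofibrations agree. Since $S_H \smashprod (-)$ is a left Quillen functor, its value $(\cofrep G/H_+) \smashprod S_H$ is cofibrant in $S_H \leftmod$. For fibrancy, Theorem \ref{thm:ekmmmodel} says every object of $G\mcal$ is fibrant, so the fibrant objects of the Bousfield localisation $L_{E\langle H \rangle} G\mcal_\qq$ are precisely the $S^0_\mcal \qq \smashprod E\langle H\rangle$-local ones; Lemma \ref{lem:SHobject} asserts that every $S_H$-module has this locality property, so in particular $(\cofrep G/H_+) \smashprod S_H$ is fibrant in $S_H \leftmod$ (whose fibrations are detected by the forgetful functor $U$).

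For the generating and compactness claims, the adjunction gives a natural isomorphism
\[
[(\cofrep G/H_+) \smashprod S_H,\; X]_*^{S_H} \;\cong\; [G/H_+,\; U X]_*^{G|H}
\]
for every $S_H$-module $X$, using that the left-hand domain is cofibrant and $UX$ is fibrant in $L_{E \langle H \rangle} G\mcal_\qq$. Generation then follows: the Quillen equivalence identifies triviality in $\ho S_H \leftmod$ with triviality in $\ho L_{E \langle H \rangle} G\mcal_\qq$, and by Lemma \ref{lem:WElocal} the latter is detected by $[G/H_+,-]_*^{G|H}$. For $G$-compactness, one uses additionally that the forgetful functor $U \colon S_H \leftmod \to G \mcal$ preserves coproducts, since coproducts of $S_H$-modules are computed on underlying spectra; combining this with compactness of $G/H_+$ yields
\[
[(\cofrep G/H_+) \smashprod S_H,\; \coprod_i X_i]_*^{S_H}
\;\cong\; [G/H_+,\; \coprod_i U X_i]_*^{G|H}
\;\cong\; \bigoplus_i [(\cofrep G/H_+) \smashprod S_H,\; X_i]_*^{S_H}.
\]

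No step is especially deep; the only subtlety worth flagging is that fibrancy in $S_H \leftmod$ is not simply ``fibrant in $G\mcal$'' but rather locality with respect to $S^0_\mcal \qq \smashprod E\langle H\rangle$, and it is precisely Lemma \ref{lem:SHobject} that bridges this gap. The rest is bookkeeping with the adjunction and the Bousfield localisation.
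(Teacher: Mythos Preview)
Your proof is correct and follows essentially the same route as the paper: cofibrancy via the left Quillen functor $S_H \smashprod(-)$, generation via the Quillen equivalence with $L_{E\langle H\rangle}G\mcal_\qq$ and Lemma~\ref{lem:WElocal}, and compactness via the adjunction together with the fact that $U$ commutes with coproducts (the paper phrases this as $U$ commuting with filtered colimits). The only place you over-elaborate is fibrancy: the paper simply notes that every object of $S_H\leftmod$ is fibrant, since the model structure is created from $G\mcal$ (Theorem~\ref{thm:ekmmmodel}), whereas you route through locality via Lemma~\ref{lem:SHobject}; both arguments land on the same conclusion, and indeed the two candidate model structures on $S_H\leftmod$ coincide because all $S_H$-modules are local.
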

\begin{proof}
Every object of $S_H \leftmod$ is fibrant and since
$\cofrep G/H_+$ is a cofibrant spectrum, 
$(\cofrep G/H_+ ) \smashprod S_H$ is cofibrant in $S_H \leftmod$.
This object is $G$-compact since the right adjoint $U$ commutes with
filtered colimits and $G/H_+$ is a $G$-compact $G$-spectrum.
Since $G/H_+$ generates $L_{E \langle H \rangle} G \mcal_\qq $, 
which is Quillen equivalent to $S_H \leftmod$, 
it follows that $S_H \leftmod$ is generated by 
$(\cofrep G/H_+ ) \smashprod S_H$.
\end{proof}

Now we perform the analogue of Theorem \ref{thm:FiniteAlgMorita}
for $S_H \leftmod$. 
Recall the positive model structure
as defined on symmetric spectra (and other categories of 
diagram spectra), written $Sp^\Sigma_+$, from 
\cite[Theorem 14.1]{mmss01}.
The positive model structure has the same weak equivalences as $Sp^\Sigma$
but the unit is no longer cofibrant. The identity functor is the left
adjoint of a Quillen equivalence from $Sp^\Sigma_+$ to $Sp^\Sigma$.
The adjunction $(\nn, \nn^{\#})$ below, is only a Quillen pair when 
we use the positive model structure on equivariant
orthogonal spectra ($G \mathscr{IS}^{U}_+$), 
hence all the other categories below must be given their
positive model structures.  

We must first prove that 
$S_H \leftmod$ is an $Sp^\Sigma_+$-model category, we do so
by constructing a strong symmetric monoidal
Quillen adjunction with left adjoint mapping from $Sp^\Sigma_+$
to $S_H \leftmod$. 
To do so we must be careful about change of universe functors 
since we need these to be both strong monoidal and compatible with the model structures. 
To solve this we pass through equivariant orthogonal spectra, 
also defined in \cite{mm02}. Thus we have the following unwieldy
series of adjoint pairs.

The adjunction of geometric realisation and the singular complex functor
(between simplicial sets and topological spaces)
induces the Quillen equivalence below,
where $Sp^\Sigma (\textrm{Top})_+$ is the category of symmetric spectra of topological
spaces with the positive model structure.
\[
|-| \colon Sp^\Sigma_+ \overrightarrow{\longleftarrow}
Sp^\Sigma (\textrm{Top})_+ \colon \sing
\]
One can then prolong to the positive model structure on orthogonal spectra 
(indexed on the universe $\mathbb{R}^\infty$), which we write as
$\mathscr{IS}^{\mathbb{R}^\infty}_+$, using 
the Quillen equivalence of \cite[Theorem 10.4]{mmss01}.
\[
\mathbb{P} \colon Sp^\Sigma (\textrm{Top})_+ \overrightarrow{\longleftarrow}
\mathscr{IS}^{\mathbb{R}^\infty}_+ \colon \mathbb{U}
\]
The trivial action and fixed point adjunction (\cite[Chapter V, Section 3]{mm02}) 
move us to $G$-equivariant orthogonal spectra 
indexed on a trivial universe.
\[
\varepsilon^* \colon \mathscr{IS}^{\mathbb{R}^\infty}_+ \overrightarrow{\longleftarrow}
G \mathscr{IS}^{\mathbb{R}^\infty}_+ \colon (-)^G
\]
We apply change of universe functors to move 
to a complete universe $U$, using the notation of 
\cite[Chapter V, Proposition 3.4]{mm02}.
\[
i_* \colon G \mathscr{IS}^{\mathbb{R}^\infty}_+ \overrightarrow{\longleftarrow}
G \mathscr{IS}^{U}_+ \colon i^*
\]
We can relate the above to EKMM spectra using the 
Quillen equivalence of \cite[Chapter IV, Theorem 1.1]{mm02}.
%as mentioned above, 
%for this adjunction to be a Quillen pair we must use the
%generalised cellular model structure on $G \mcal$
\[
\nn \colon G \mathscr{IS}^{U}_+ \overrightarrow{\longleftarrow}
G \mcal^{U} \colon \nn^{\#}
\]
Then we make use of the free $S_H$-module functor.
\[
- \smashprod S_H \colon G \mcal^U  \overrightarrow{\longleftarrow}
S_H \leftmod \colon U
\]
Since each of these adjoint pairs is strong symmetric monoidal 
it follows that $S_H \leftmod$ is an $Sp^\Sigma_+$-model category, 
the enrichment is given by the following formula. Let $X$ and $Y$ be 
$S_H$-modules, then $\underhom (X,Y) = \sing \mathbb{U} (i^* \nn^\# F_{S_H}(X,Y))^{G}$
is the symmetric spectrum object of functions from $X$ to $Y$. 
That is, one takes the function spectrum of $S_H$-modules, 
applies the functor $\nn^\#$, moves to a trivial universe ($i^*$),
takes $G$-fixed points, moves down to symmetric spectra of 
topological spaces and applies the singular complex functor
to get to $Sp^\Sigma_+$. This construction comes equipped with a
natural isomorphism 
$\pi_*(\underhom(X,Y)) \cong [X,Y]^{S_H}_*$. 

\begin{definition}\label{def:finitegcalH}
Let $\gcal_{top}^H$ be the set of all smash products 
(in the category of $S_H$-modules) of
$(\cofrep G/H_+ ) \smashprod S_H$, 
we include $S_H$ as the zero-fold smash product.
Let $\ecal_{top}^H$ be 
the $Sp^\Sigma_+$-enriched category on
the objects of $\gcal_{top}^H$.
\end{definition}

The enrichment of $S_H \leftmod$ over $Sp^\Sigma_+$
is defined in terms of a (series of) strong symmetric
monoidal adjunctions. Hence it is routine to prove that
$\ecal_{top}^H$ is a symmetric monoidal enriched category. 
With the exception of the unit, all objects of $\gcal_{top}^H$
are cofibrant and all objects are fibrant.
We replace the category of $S_H$-modules
by the Quillen equivalent category of modules 
over $\ecal_{top}^H$, see also \cite[Proposition 4.1]{greshi}.
Thus we have encoded $S_H \leftmod$ in terms of a 
symmetric spectrum-enriched category. 
The adjunction below is analogous to the functors of 
Theorem \ref{thm:FiniteAlgMorita}.

\begin{theorem}\label{thm:finitemoritaequiv}
The adjoint pair
\[
(-) \smashprod_{\ecal_{top}^H} \gcal_{top}^H \colon
\rightmod \ecal_{top}^H
\overrightarrow{\longleftarrow}
S_H \leftmod \colon \underhom (\gcal_{top}^H,-)
\]
is a strong symmetric monoidal Quillen equivalence.
\end{theorem}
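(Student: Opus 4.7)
The plan is to mirror the proof of Theorem \ref{thm:FiniteAlgMorita}, substituting $Sp^\Sigma_+$ for $\ch(\qq \leftmod)$ and $\smashprod$ for $\otimes_\qq$. The adjunction itself is the standard one associated to an enriched category equipped with a distinguished set of objects: $\underhom(\gcal_{top}^H, Y)$ is the contravariant $Sp^\Sigma_+$-enriched functor $a \mapsto \underhom(a, Y)$, and its left adjoint is the coend $M \smashprod_{\ecal_{top}^H} \gcal_{top}^H = \int^a M(a) \smashprod a$. The unit of the adjunction is uniquely determined by the $Sp^\Sigma_+$-enriched Yoneda lemma.

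To check that this is a Quillen pair I would observe that weak equivalences and fibrations in $\rightmod \ecal_{top}^H$ are detected objectwise in $Sp^\Sigma_+$ by construction. Since $S_H \leftmod$ is an $Sp^\Sigma_+$-model category through the sequence of strong symmetric monoidal adjunctions introduced earlier in this section, the functor $\underhom(a, -) \colon S_H \leftmod \to Sp^\Sigma_+$ preserves fibrations and weak equivalences for each $a \in \gcal_{top}^H$; recall that every object of $S_H \leftmod$ is fibrant. Hence $\underhom(\gcal_{top}^H, -)$ is a right Quillen functor.

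For the Quillen equivalence the standard strategy is to verify the derived unit and counit on a set of compact generators on each side, and then invoke a triangulated-category argument. The generators of $\rightmod \ecal_{top}^H$ are the free modules $F_a$ for $a \in \gcal_{top}^H$, while $S_H \leftmod$ is generated by the compact cofibrant object $(\cofrep G/H_+) \smashprod S_H \in \gcal_{top}^H$ by the previous lemma. The derived unit on $F_a$ is an isomorphism via the enriched co-Yoneda identification $F_a \smashprod_{\ecal_{top}^H} \gcal_{top}^H \cong a$ together with the tautological equality $\underhom(\gcal_{top}^H, a) = F_a$, and symmetrically the derived counit on $a$ is the canonical weak equivalence $F_a \smashprod_{\ecal_{top}^H} \gcal_{top}^H \to a$. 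Extending from generators to all objects then follows the pattern of \cite[Theorem 3.9.3]{ss03stabmodcat}: both model categories are stable, the derived adjunction preserves coproducts and exact triangles, and a compact generating set on which the derived unit and counit are isomorphisms suffices.

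Finally, the strong symmetric monoidal statement for the left adjoint is verified by precisely the same coend chain of isomorphisms displayed after Theorem \ref{thm:FiniteAlgMorita}, with $\otimes_\qq$ replaced by $\smashprod$. This rests on $\ecal_{top}^H$ being a symmetric monoidal $Sp^\Sigma_+$-enriched category, a fact that follows immediately because its enrichment is defined through strong symmetric monoidal functors. The main obstacle I anticipate is that the co-Yoneda computation $\int^b \ecal_{top}^H(b, a) \smashprod b \cong a$ must hold on the nose, not merely up to weak equivalence, when $a \in \gcal_{top}^H$, and dually that forming this coend along a cofibrant module must preserve weak equivalences. This is exactly why $\gcal_{top}^H$ is chosen to consist of (positively) cofibrant $S_H$-modules and why it is essential that $S_H \leftmod$ carries the structure of an $Sp^\Sigma_+$-model category rather than being merely enriched.
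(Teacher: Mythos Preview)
Your approach is essentially the same as the paper's, and the broad strokes are correct. However, you have overlooked a subtlety that the paper addresses explicitly: because $\rightmod \ecal_{top}^H$ is built using the \emph{positive} model structure $Sp^\Sigma_+$, the sphere spectrum $\sphspec$ is not cofibrant, and consequently the free modules $F_a = \ecal_{top}^H(-,a)$ are not cofibrant in $\rightmod \ecal_{top}^H$. Your co-Yoneda argument correctly computes the \emph{underived} unit, but the \emph{derived} unit requires first cofibrantly replacing $F_a$. The paper resolves this by taking $\cofrep \sphspec \smashprod F_a$ as an explicit cofibrant replacement and then observing that the left adjoint sends this to $\cofrep \sphspec \smashprod a$, which is weakly equivalent to $a$ because each $a \in \gcal_{top}^H$ is either $S_H$ or a cofibrant $S_H$-module.

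Relatedly, your closing claim that ``$\gcal_{top}^H$ is chosen to consist of (positively) cofibrant $S_H$-modules'' is not accurate: $S_H$ itself is the zero-fold smash product in $\gcal_{top}^H$ and is \emph{not} cofibrant in $S_H \leftmod$. The paper therefore treats the two cases separately when verifying that the right adjoint is Quillen: for cofibrant $a$ the usual pushout-product argument gives that $\underhom(a,-)$ preserves fibrations and acyclic fibrations, while for $a = S_H$ one instead uses that every object of $S_H \leftmod$ is fibrant. Your single appeal to fibrancy of all objects does not by itself establish that $\underhom(a,-)$ preserves fibrations for general cofibrant $a$.
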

\begin{proof}
If $\sigma \in \gcal_{top}^H$ is not $S_H$, then
it is cofibrant, so $\underhom (\sigma, -)$ preserves 
fibrations and all weak equivalences when considered as a functor from
$S_H \leftmod$ to $Sp^\Sigma_+$.
The functor $\underhom (S_H, -)$
preserves fibrations and all weak equivalences (since every object of
$S_H \leftmod$ is fibrant),
hence the above adjunction is a Quillen pair.

Following the proof of \cite[Theorem 3.9.3]{ss03stabmodcat}, 
we prove that the unit and counit of the derived adjunction
are weak equivalences. It suffices to do so on the generators
which are the free modules and the elements of $\gcal_{top}^H$. 
The free modules $F_\sigma = \hom(-, \sigma)$ are not cofibrant, however if we let 
$\cofrep \sphspec$ be a cofibrant replacement of $\sphspec$, the sphere spectrum 
in $Sp^\Sigma_+$, then $\cofrep \sphspec \smashprod F_\sigma$
is a cofibrant replacement of $F_\sigma$. The left
derived functor, $(-) \smashprod_{\ecal_{top}^H}^L \gcal_{top}^H$,
takes $\cofrep \sphspec \smashprod F_\sigma$ to $\cofrep \sphspec \smashprod \sigma$.
Since $\sigma$ is either $S_H$ or cofibrant, this
is weakly equivalent to $\sigma$.
The right adjoint preserves all weak equivalences and it follows that 
unit and counit of the derived adjunction
are weak equivalences. Hence we have a Quillen
equivalence, that this is a strong symmetric monoidal follows by the 
same arguments as for the proof of Theorem \ref{thm:FiniteAlgMorita}.
\end{proof}

\begin{remark}\label{rmk:whyfibrant}
In order to know that the above is a strong monoidal Quillen equivalence 
we need to know that $\gcal_{top}^H$ is closed under the smash product,
that every object is fibrant and that every non-unit object is cofibrant. 
In a general model category there is no reason to expect that the smash product
of fibrant objects will be fibrant.
Hence we use the category of $S_H$-modules
in EKMM $S$-modules, where every object is fibrant.
\end{remark}

\section{From $\ecal_{top}^H$ to $\ecal_t^H$.}\label{sec:ETOPtoET}

We now have two enriched categories
that we wish to compare:
$\ecal_{top}^H$ and $\ecal_a^H$. The difficulty is that the first is 
a category enriched over (positive) symmetric spectra, the second is enriched over 
rational chain complexes. 
Theorem \ref{thm:finiteEtopisEt} creates a new category 
$\ecal_t^H$ from $\ecal_{top}^H$, 
these categories have the same set of objects: $\gcal_{top}^H$, 
but $\ecal_t^H$ is enriched over
$\ch(\qq \leftmod)$. The categories of modules
$\rightmod \ecal_t^H$ and 
$\rightmod \ecal_{top}^H$ are Quillen equivalent as symmetric monoidal
model categories. 
Furthermore this construction comes with an isomorphism
of monoidal $\ch(\qq \leftmod)$-enriched categories between 
$\h_* \ecal_t^H$ and $\pi_* \ecal_{top}^H$. 

This theorem allows us to move from a topological
setting to an algebraic setting, while preserving monoidal
structures and also keeping control over the homology
of the new category. In terms of 
\cite[Example 5.1.2]{ss03stabmodcat},
see section \ref{sec:outline}, this theorem corresponds
to the zig-zag of Quillen equivalences
between $\rightmod \h \underline{A}$
and $\rightmod \underline{A}$. However this zig-zag 
passes through a model category without a monoidal 
product and so one cannot use it
to to understand monoidal structures. 
Additionally, this zig-zag only exists when 
the homotopy groups of 
the $Sp^\Sigma$-enriched category $\h \underline{A}$
are concentrated in degree zero, whereas our 
theorem has no such requirement.

Theorem \ref{thm:finiteEtopisEt} is similar to 
\cite[Corollary 2.16]{shiHZ}, which is stated below. 
Recall that a stable model category is called \textbf{rational} if the set of maps 
in the homotopy category between any two objects forms a rational vector space
(being stable implies that such sets are always abelian groups).

\begin{proposition}\label{prop:shipmachine}
Let $\ccal$ be a rational stable model category, 
with a set $\gcal$ of compact generators,
which is Quillen equivalent to a $Sp^\Sigma$-enriched model category. 
Then there a is category $\mathcal{A}$, 
which is enriched over rational
chain complexes and a chain of 
Quillen equivalences between $\ccal$ and the category of
right $\mathcal{A}$-modules. The objects of $\mathcal{A}$ correspond 
to the objects in $\gcal$ and there is an isomorphism
of graded $\qq$-categories between the homology category 
$\h_* \mathcal{A}$ and the full graded subcategory of
$\ho(\ccal)$ with objects $\gcal$. 
\end{proposition}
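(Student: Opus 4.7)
The plan is to mimic Shipley's construction in \cite{shiHZ}, upgraded so that it applies to any rational stable model category with a chosen set of compact generators $\gcal$, not just to a category of modules over a ring spectrum. The construction will be realised by passing through the same zig-zag already displayed in section \ref{sec:organise}, namely
\[
\rightmod \ecal_{\gcal}
\overset{\tilde{\qq}}{\overrightarrow{\longleftarrow}}
\rightmod \tilde{\qq} \ecal_{\gcal}
\overset{L'}{\overleftarrow{\longrightarrow}}
\rightmod \phi^* N \tilde{\qq} \ecal_{\gcal}
\overset{D}{\overrightarrow{\longleftarrow}}
\rightmod D \phi^* N \tilde{\qq} \ecal_{\gcal},
\]
and then identifying $\mathcal{A}$ with the terminal category $D \phi^* N \tilde{\qq} \ecal_{\gcal}$.

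First I would move from $\ccal$ to a category of modules over a symmetric spectrum-enriched category $\ecal_{\gcal}$ whose object set is $\gcal$. By hypothesis, $\ccal$ is Quillen equivalent to some $Sp^\Sigma$-enriched model category $\mcal$; under this equivalence, the compact generators $\gcal$ correspond to a set of cofibrant-fibrant generators in $\mcal$, and the $Sp^\Sigma$-enrichment gives morphism spectra $\mcal(\sigma_1,\sigma_2)$. These assemble into a $Sp^\Sigma$-enriched category $\ecal_{\gcal}$. By the Schwede-Shipley Morita theory (\cite[Theorem 3.9.3]{ss03stabmodcat}), the pair $((-) \smashprod_{\ecal_\gcal} \gcal, \underhom(\gcal, -))$ is a Quillen equivalence between $\mcal$ and $\rightmod \ecal_\gcal$; compactness of the elements of $\gcal$ together with the fact that they generate $\ho \ccal$ is exactly what ensures the unit and counit of this derived adjunction are weak equivalences on the generating free modules.

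Next I would apply, one rung at a time, the functorial rationalisation/Dold-Kan-style chain of Quillen equivalences from \cite{shiHZ}. The functors $\tilde{\qq}$ (rationalisation), $\phi^* N$ (Dold-Kan and shuffle), and $D$ (flattening a symmetric spectrum of bounded-below chain complexes to an unbounded chain complex) are all lax symmetric monoidal and descend to Quillen equivalences between the corresponding enriched module categories. Applying each functor object-wise to the hom-objects of $\ecal_\gcal$ yields new enriched categories over $Sp^\Sigma(\sqq)$, $Sp^\Sigma(\ch(\qq)_+)$, and finally $\ch(\qq\leftmod)$, and at each stage the induced adjunction on right modules is a Quillen equivalence because the underlying functor is. Rationality of $\ccal$ is used at the first step, so that rationalising the morphism spectra does not lose information on the homotopy category. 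Setting $\mathcal{A} = D\phi^* N \tilde{\qq}\ecal_\gcal$ completes the zig-zag.

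It remains to identify $\h_* \mathcal{A}$ with the full graded subcategory of $\ho \ccal$ on the objects $\gcal$. For this I would use the compatibility property of the Shipley functors, namely that for each morphism object $\ecal_\gcal(\sigma_1,\sigma_2)$ there is a natural isomorphism of graded $\qq$-modules $\h_* D\phi^* N \tilde{\qq}\ecal_\gcal(\sigma_1,\sigma_2) \cong \pi_*\ecal_\gcal(\sigma_1,\sigma_2)\otimes\qq$, and this isomorphism is compatible with composition (this is essentially the content of \cite[Corollary 2.16]{shiHZ}). But $\pi_*\ecal_\gcal(\sigma_1,\sigma_2)\otimes\qq$ is, by construction of $\ecal_\gcal$ and the rationality of $\ccal$, exactly $[\sigma_1,\sigma_2]_*^\ccal$ with its composition. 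The main technical obstacle is the verification that each of the three Quillen equivalences $\tilde{\qq}$, $\phi^* N$, and $D$ genuinely lifts to a Quillen equivalence of modules over the corresponding enriched category: one must check that the relevant pushout-product and monoid axioms hold in the intermediate enriched settings and that the lax monoidal comparison maps are weak equivalences on cofibrant inputs, so that Schwede-Shipley's machinery from \cite{ss03monequiv} applies. Once this is granted, the proposition follows by stringing together the equivalences and reading off the homology.
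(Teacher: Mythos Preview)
The paper does not give its own proof of this proposition: it is introduced as a restatement of \cite[Corollary 2.16]{shiHZ} and is simply cited, not proven. Your outline is essentially a correct sketch of Shipley's argument, and it coincides with the machinery the paper then develops in detail for the specific case $\ecal_{top}^H$ (Proposition \ref{prop:moduleadjunctions} through Theorem \ref{thm:finiteEtopisEt}): pass to modules over the endomorphism $Sp^\Sigma$-category via Schwede--Shipley Morita theory, then push the enriching category through $\tilde{\qq}$, $\phi^*N$, and $D$, checking at each stage that the induced adjunction on module categories is a Quillen equivalence. So there is nothing to compare --- your approach is the intended one, and the paper defers the general proof to \cite{shiHZ}.
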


We introduce the Quillen pairs and model categories from 
\cite{shiHZ} that we will need for our construction. Note that the functor $L$ below
does not have a simple description so we concentrate on the remaining
functors.  

As well as $Sp^\Sigma$, we also use $Sp^\Sigma(\sqq \leftmod)$, symmetric spectra 
in simplicial $\qq$-modules see \cite{hov01}. 
For symmetric spectra in simplicial sets suspension is given in terms
of $S^1$, for  $Sp^\Sigma(\sqq \leftmod)$ suspension is defined using the object
$\widetilde{\qq} S^1$, which we define below.
We also use $Sp^\Sigma(\ch( \qq \leftmod)_+)$, 
symmetric spectra in non-negatively graded chain complexes
with suspension object $\qq[1]$, the chain complex consisting of $\qq$ in degree 1.
More generally we have $\qq[n]$, for $n$ an integer, which consists
of $\qq$ in degree $n$. 

The reduced free simplicial $\qq$-module functor from simplicial sets to
simplicial $\qq$-modules induces strong symmetric monoidal
Quillen adjunction below. Recall that reduced means that simplices of the form
$0 \cdot x$ and $q \cdot *$ are identified 
with the basepoint.
\[
\widetilde{\qq} \colon Sp^\Sigma \overrightarrow{\longleftarrow}
Sp^\Sigma(\sqq \leftmod) \colon U.
\]

Normalisation defines a functor $N \colon \sqq \leftmod \to \ch( \qq \leftmod)_+$,
in fact $N$ is the right adjoint of monoidal Quillen equivalence between these
two categories. 
The functor $N$ is not strong monoidal, so $N$ does not directly induce
a functor on the level of symmetric spectra. 
Take an object $X$ of $Sp^\Sigma(\sqq \leftmod)$ 
and apply $N$ levelwise to $X$. 
There is an isomorphism $\qq[1] \to N( \widetilde{\qq} S^1)$,
hence we have maps
\[
\phi_n \colon \qq [n] \overset{\cong}{\to} N( \widetilde{\qq} S^1)^{\otimes_n} 
\to N( (\widetilde{\qq} S^1)^{\otimes_n})
\]
which we use to obtain a collection of maps 
$\qq[n] \otimes NX_m \to NX_{n+m}$. 
This information assembles to give an object of 
$Sp^\Sigma(\ch( \qq \leftmod)_+)$, which we call $\phi^* N(X)$. 
This gives a functor $\phi^* N$ which is the right adjoint
of a symmetric monoidal Quillen equivalence. 
\[
L \colon Sp^\Sigma(\ch( \qq \leftmod)_+) \overrightarrow{\longleftarrow} 
Sp^\Sigma(\sqq \leftmod) \colon \phi^* N.
\]
Let $C_0 \colon \ch(\qq \leftmod) \to \ch( \qq \leftmod)_+$ be the functor
which takes a chain complex $X$ to its $[-1]$-connective cover. 
Thus $C_0 X_n$ is $X_n$ for $n > 0$, zero for $n < 0$ and 
$\ker (\partial \colon X_0 \to X_{-1})$ for $n=0$. 
For a chain complex $Y$ let $RY$ be the symmetric spectrum with 
$(RY)_n = C_0 (Y \otimes \qq[n])$.
This has a left adjoint $D$, thus we obtain the final adjoint pair which is a 
strong symmetric monoidal Quillen equivalence. 
\[
D \colon Sp^\Sigma(\ch( \qq \leftmod)_+)
\overrightarrow{\longleftarrow}
\ch( \qq \leftmod )\colon R
\]
We can give an explicit definition of $D$ taken from \cite{shiHZ}.
Let $I$ be the skeleton
of the category of finite sets and injections with objects $\mathbf{n}$. 
For $X \in Sp^\Sigma(\ch( \qq \leftmod)_+)$ define a
functor $D_X \colon I \to \ch(\qq \leftmod)$ by $D_X(\mathbf{n}) = \qq [-n] \otimes X_n$. 
There are structure maps
$\sigma \colon \qq[m - n] \otimes X_n \to X_m$ with adjoints 
$\widetilde{\sigma} \colon X_n \to \qq[n - m] \otimes X_m$. For a standard inclusion
of a subset $\alpha \colon \mathbf{n} \to \mathbf{m}$ the map $D_X(\alpha)$ is 
$\id_{\qq[-n]} \otimes \widetilde{\sigma}$. 
For an isomorphism in I, the action is
given by the tensor product of the action on $X_n$ and the sign action on $\qq[-n]$. 
The functor $D \colon Sp^\Sigma(\ch( \qq \leftmod)_+) \to \ch( \qq \leftmod )$ 
is defined by $DX = \colim_I D_X$. 

There has been some confusion over whether or not $D$ 
is \emph{symmetric} monoidal, see \cite{HZcorrection}. We assert that 
$D$ is indeed symmetric monoidal, 
this is based upon a detailed note written by Neil Strickland:
\cite{Dsymmon}. 

Now we can start constructing our new category $\ecal_t$. 
The category $\ecal_{top}^H$ is enriched over symmetric spectra
and we have given $\rightmod \ecal_{top}^H$ a model structure
using $Sp^\Sigma_+$. Let us temporarily call this
$(\rightmod \ecal_{top}^H)_+$.
We can also consider 
the category $\rightmod \ecal_{top}^H$ with model structure
defined via $Sp^\Sigma$ (that is, using the stable model structure). 
Since the weak equivalences are the same in both cases and the
fibrations are defined object-wise we have the following.
\begin{lemma}\label{lem:postostable}
The identity functor from 
$(\rightmod \ecal_{top}^H)_+$ to
$\rightmod \ecal_{top}^H$ is the left adjoint of a
Quillen equivalence. 
\end{lemma}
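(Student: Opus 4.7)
The plan is to split the argument into two small pieces: first show that the identity functor is a left Quillen functor in the direction stated, and then upgrade the resulting Quillen pair $(\id,\id)$ to a Quillen equivalence.

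For the Quillen adjunction I would compare generating (acyclic) cofibrations of the two model structures on $\rightmod \ecal_{top}^H$. Each is cofibrantly generated, with generating (acyclic) cofibrations of the form $f \smashprod \id_{F_\sigma}$, where $\sigma$ ranges over $\gcal_{top}^H$ and $f$ ranges over the generating (acyclic) cofibrations of the chosen model structure on $Sp^\Sigma$. The generating (acyclic) cofibrations of $Sp^\Sigma_+$ are obtained from those of the stable structure on $Sp^\Sigma$ by omitting the level-zero generators, while the two structures on $Sp^\Sigma$ share the same class of weak equivalences. Consequently every generating (acyclic) cofibration of $(\rightmod \ecal_{top}^H)_+$ is a generating (acyclic) cofibration of $\rightmod \ecal_{top}^H$, so the identity functor, viewed as the left adjoint, preserves cofibrations and acyclic cofibrations, giving the desired Quillen pair.

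For the Quillen equivalence I would use the elementary observation that both model structures share the same class of weak equivalences, namely object-wise stable equivalences of symmetric spectra (the fibrations may differ but the weak equivalences do not). Given a cofibrant $X$ in $(\rightmod \ecal_{top}^H)_+$, a fibrant $Y$ in $\rightmod \ecal_{top}^H$ and a map $f \colon X \to Y$, the adjunct of $f$ under $(\id,\id)$ is $f$ itself, and whether $f$ is a weak equivalence does not depend on which of the two model structures we work in. The Quillen equivalence criterion therefore holds automatically.

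There is no substantive obstacle here; the only point requiring a little care is checking that the generators of the two model structures on $\rightmod \ecal_{top}^H$ are related as described, which is immediate from the description of the generating (acyclic) cofibrations for $\rightmod \ecal$ recalled earlier in the paper.
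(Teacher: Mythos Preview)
Your argument is correct and matches the paper's reasoning: the paper's entire justification is the sentence preceding the lemma, namely that the weak equivalences agree and the fibrations are defined object-wise, which is the dual formulation of your check on generating cofibrations. One minor caution: the literal generating \emph{acyclic} cofibration sets for $Sp^\Sigma_+$ and $Sp^\Sigma$ need not be related by simple inclusion, but since the weak equivalences coincide it suffices to know that positive cofibrations are stable cofibrations, which your containment of generating cofibrations establishes.
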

From here on, we only consider $\rightmod \ecal_{top}^H$ with the model
structure induced from $Sp^\Sigma$. This is a much better behaved category
as now the unit is cofibrant. 

Let $\widetilde{\qq} \ecal_{top}^H$ be the $Sp^\Sigma(\sqq \leftmod)$-enriched category
with object set $\gcal_{top}^H$ and morphisms
defined by $(\widetilde{\qq} \ecal_{top}^H)(a,b) = \widetilde{\qq} (\ecal_{top}^H(a,b))$.
This construction is a simplification of 
\cite[Proposition A.3b]{dugshi}.
Since $\widetilde{\qq}$ is symmetric monoidal, $\widetilde{\qq} \ecal_{top}^H$
is a symmetric monoidal enriched category. 
We repeat this operation twice more to form $D \phi^* N \tilde{\qq} \ecal_{top}^H$
a symmetric monoidal $\ch(\qq \leftmod)$-enriched category.
We define $\ecal_t^H$ as $D \phi^* N \tilde{\qq} \ecal_{top}^H$.

\begin{proposition}\label{prop:moduleadjunctions}
For each of the adjoint pairs
$(\tilde{\qq}, U)$, $(L,\phi^*N)$
and $(D,R)$,
there is an induced Quillen equivalence as below.
\begin{eqnarray*}
\tilde{\qq} : \rightmod \ecal_{top}^H
& \overrightarrow{\longleftarrow} &
\rightmod \tilde{\qq} \ecal_{top}^H : U' \\
L' : \rightmod\phi^* N \tilde{\qq}  \ecal_{top}^H
& \overrightarrow{\longleftarrow} &
\rightmod\tilde{\qq} \ecal_{top}^H : \phi^*N \\
D   : \rightmod \phi^* N \tilde{\qq} \ecal_{top}^H
& \overrightarrow{\longleftarrow} &
\rightmod D \phi^* N \tilde{\qq} \ecal_{top}^H : R'
\end{eqnarray*}
\end{proposition}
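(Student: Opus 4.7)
The plan is to treat all three adjunctions by a single uniform argument, following the template of Schwede--Shipley \cite[Theorem 3.9.3]{ss03stabmodcat}. In each case the underlying pair $(F,G)$ is a (strong or weak) symmetric monoidal Quillen equivalence between model categories, and the enriched category in the target is defined by applying $F$ objectwise to the morphism objects of the enriched category in the source; so the question is to lift the Quillen equivalence from the enrichment categories to the categories of right modules.

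First I would make each induced adjunction precise. For the two strong monoidal pairs $(\tilde{\qq}, U)$ and $(D, R)$, the induced left adjoint on modules is defined objectwise: given a right $\ecal$-module $M$, set $(FM)(a) = F(M(a))$ and use the strong monoidal isomorphism
\[
F(M(b)) \smashprod F\ecal(a,b) \cong F(M(b) \smashprod \ecal(a,b)) \to F(M(a))
\]
to transport the action maps. The right adjoint applies $G$ objectwise and assembles action maps from the lax monoidal comparison of $G$ combined with the unit $\ecal(a,b) \to GF\ecal(a,b)$. For the middle pair $(L, \phi^* N)$, the same objectwise recipe defines the right adjoint $\phi^* N$ on modules, while $L'$ is pinned down as the unique colimit-preserving left adjoint; the fact that $L$ is strong monoidal but $L'$ is only weakly monoidal reflects that on modules one must coequalise action maps of the enriched category, which interacts nontrivially with the monoidal comparison.

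Next I would verify that each adjunction is a Quillen pair. Since fibrations and weak equivalences in $\rightmod \ecal$ are defined objectwise in the enrichment category, and the right adjoint on modules is computed objectwise, this follows at once from the fact that the underlying $(F,G)$ is a Quillen pair. To show each is a Quillen \emph{equivalence}, I would check the derived unit on the set of generating free modules $F_\sigma^{\ecal} = \ecal(-, \sigma)$ for $\sigma \in \gcal_{top}^H$. Evaluated at an object $\tau$, the derived unit at $F_\sigma^{\ecal}$ is (after a suitable cofibrant replacement) the derived unit of the underlying $(F,G)$ applied to $\ecal(\tau, \sigma)$, which is a weak equivalence because $(F,G)$ is a Quillen equivalence. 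Since the right adjoint preserves and reflects weak equivalences between fibrant objects and $\gcal_{top}^H$ provides a set of compact generators for the triangulated homotopy category of $\rightmod \ecal$, a standard argument (as in \cite[Lemma 2.2.1]{ss03stabmodcat}) then upgrades this to the desired Quillen equivalence on modules.

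The main obstacle is the case $(L', \phi^* N)$, because $L$ is only part of a \emph{weak} symmetric monoidal Quillen equivalence: the functor $\phi^* N$ is merely lax monoidal, so checking that the objectwise application $\phi^* N \tilde{\qq} \ecal_{top}^H$ really does form an associative and unital enriched category (and that the various free-module resolutions transport correctly) requires careful bookkeeping with the lax structure maps of $\phi^* N$. The other two cases are more straightforward, since both $\tilde{\qq}$ and $D$ are strong symmetric monoidal. A subsidiary point to check throughout is that the morphism objects $\ecal_{top}^H(a,b)$ are sufficiently cofibrant in each enrichment, so that applying the left adjoints levelwise computes the derived functors one expects; this is where one uses that $\gcal_{top}^H$ consists (apart from the unit) of cofibrant $S_H$-modules so that the function spectra behave well.
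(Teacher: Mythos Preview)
Your uniform argument has a genuine gap: you assume that each underlying adjunction $(F,G)$ is a Quillen \emph{equivalence}, but $(\tilde{\qq},U)$ between $Sp^\Sigma$ and $Sp^\Sigma(\sqq\leftmod)$ is only a Quillen \emph{adjunction}. The functor $\tilde{\qq}$ rationalises homotopy groups, so the derived unit $X \to U\tilde{\qq}X$ is not a weak equivalence on an arbitrary symmetric spectrum $X$. Consequently your step ``the derived unit of the underlying $(F,G)$ applied to $\ecal(\tau,\sigma)$, which is a weak equivalence because $(F,G)$ is a Quillen equivalence'' fails precisely for the first of the three pairs.

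The paper's proof does not treat this case by the same mechanism as the other two. For $(\tilde{\qq},U')$ it uses the specific fact that each morphism object $\ecal_{top}^H(a,b)$ already has \emph{rational} homotopy groups (because $S_H$-modules are $S^0_\mcal\qq$-local), and for a spectrum with rational homotopy the derived unit of $(\tilde{\qq},U)$ \emph{is} a weak equivalence. That is the missing ingredient in your argument. For the remaining two pairs the paper simply cites \cite[Theorem~6.5]{ss03monequiv}, since $(L,\phi^*N)$ and $(D,R)$ genuinely are Quillen equivalences on the enrichment categories. As a side remark, your parenthetical ``the fact that $L$ is strong monoidal'' is also off: $L$ is only the left adjoint of a \emph{weak} monoidal Quillen pair, which is why the paper must describe $L'$ explicitly via a coequaliser rather than objectwise.
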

\begin{proof}
The induced adjunctions are defined
in \cite[Section 3]{ss03monequiv} and we give brief details below.
Since $\tilde{\qq}$ and $D$ are strong monoidal
these pass to the categories of modules
as above without change. The right adjoint $\phi^*N$
also passes directly to the module categories,
whereas all the other functors must be changed.
The right adjoints $U'$ and $R'$ are slight alterations of
$U$ and $R$, we demonstrate for $U'$. Take a
$\tilde{\qq} \ecal_{top}^H$-module $M$, 
then $U'M(\sigma) = U(M(\sigma))$,
we must then give maps
\[
U M(\sigma)  \smashprod \ecal_{top}^H(\sigma', \sigma)
\to U M(\sigma').
\]
We do so by applying the unit map
$\ecal_{top}^H(\sigma', \sigma) \to U \tilde{\qq} \ecal_{top}^H(\sigma', \sigma)$
and then using the monoidality of $U$ and the action map of $M$.
The left adjoint $L'$ is more complicated, since
it is not \emph{strong} monoidal.
Take a $\phi^* N \tilde{\qq} \ecal_{top}^H$-module
$M$, then $L'M(a)$ is defined as the coequaliser of the following diagram
(we describe the maps below).
\[
\bigvee_{b,c} L \big( M(b) \smashprod \phi^* N \tilde{\qq} \ecal_{top}^H(c,b) \big)
\smashprod \tilde{\qq} \ecal_{top}^H(a,c)
\overrightarrow{\longrightarrow}
\bigvee_{d} LM(d) \smashprod \tilde{\qq} \ecal_{top}^H(a,d)
\]
One map is induced by the action of
$\phi^* N \tilde{\qq} \ecal_{top}^H$ on $M$
and the other is the composite of the op-monoidal structure on $L$,
the counit of $(L, \phi^*N)$ and the composition map of
$\tilde{\qq} \ecal_{top}^H$.

The pair $(\tilde{\qq},U')$ induce a Quillen pair
between $\rightmod \ecal_{top}^H$ and
$\rightmod \tilde{\qq} \ecal_{top}^H$.
The free modules are a set of generators
for these categories. Since for each $a$ and $b$ in $\gcal_{top}^H$,
$\ecal_{top}^H(a,b)$ has rational homotopy groups, it follows that
the unit and counit
for the derived adjunctions
are equivalences on these generators and thus
$(\tilde{\qq},U')$ is a Quillen equivalence.
Since $(L, \phi^*N)$ and $(D,R)$ are Quillen equivalences
\cite[Theorem 6.5]{ss03monequiv}, implies that the other two pairs
Quillen equivalences.
\end{proof}

\begin{proposition}
The Quillen equivalences $(\tilde{\qq},U')$ and $(D,R')$ are
strong symmetric monoidal Quillen equivalences. The adjunction
$(L', \phi^*N)$ is a symmetric monoidal Quillen equivalence.
\end{proposition}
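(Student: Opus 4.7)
The plan is to treat each of the three adjunctions separately. In every case the key observation is that the monoidal product on $\rightmod \mathcal{E}$ is given by the coend
\[
M \square N(a) = \int^{b,c} M(b) \otimes N(c) \otimes \mathcal{E}(a, b \otimes c),
\]
so the monoidality properties of a functor on modules induced by a base change functor $F$ can be deduced from the monoidality of $F$ applied to mapping objects of the enriched category, provided $F$ also preserves coends.

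For $(\tilde{\qq}, U')$ and $(D, R')$ I would argue directly. Both $\tilde{\qq}$ and $D$ are strong symmetric monoidal on their base categories and are left adjoints, hence preserve coends. Applied to the coend formula above, strong monoidality of $\tilde{\qq}$ gives a natural isomorphism
\[
\tilde{\qq}\bigl( M(b) \smashprod N(c) \smashprod \ecal_{top}^H(a, b \otimes c) \bigr)
\cong
\tilde{\qq}M(b) \smashprod \tilde{\qq}N(c) \smashprod \tilde{\qq}\ecal_{top}^H(a, b \otimes c),
\]
and similarly for $D$. Pulling this isomorphism inside the coend yields natural isomorphisms $\tilde{\qq}(M \square N) \cong \tilde{\qq}M \square \tilde{\qq}N$ and $D(M \square N) \cong DM \square DN$. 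Each of these functors also sends the free module on the unit object to the free module on the unit object (since the image under a strong monoidal functor of the unit is the unit), so both adjunctions are strong symmetric monoidal.

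The pair $(L', \phi^*N)$ is the main obstacle. Here $\phi^*N$ is only lax symmetric monoidal, with lax structure map induced from the shuffle transformation for $N$; equivalently $L$ is op-lax symmetric monoidal, so one only obtains a canonical comparison map in one direction. To verify that $(L', \phi^*N)$ is a monoidal Quillen pair in the sense of the definition given earlier, I would check that the canonical map
\[
m \colon L'(M \square N) \to L'M \square L'N
\]
is a weak equivalence when $M$ and $N$ are cofibrant, and that the composite $L'(\cofrep F_I) \to L'F_I \to F_{I'}$ is a weak equivalence. By cellular induction along the generating cofibrations $A \otimes F_a \to B \otimes F_a$ (with $F_a$ the free module on an object $a$ of $\phi^*N\tilde{\qq}\ecal_{top}^H$ and $A \to B$ a generating cofibration of the base), both verifications reduce to the corresponding statements for the underlying pair $(L, \phi^*N)$ applied to the cofibrant objects $A$, $B$ in $Sp^\Sigma(\ch(\qq\leftmod)_+)$ and to the mapping objects $\phi^*N\tilde{\qq}\ecal_{top}^H(a,b)$. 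These reduced statements are exactly the weak monoidal Quillen conditions that hold for $(L, \phi^*N)$ on the base categories by Shipley's work, which gives the result when combined with Proposition \ref{prop:moduleadjunctions}.
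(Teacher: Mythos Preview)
Your treatment of $(\tilde{\qq},U')$ and $(D,R')$ is fine and matches the paper, which also just says this is a routine coend manipulation.

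For $(L',\phi^*N)$ your outline is on the right track but has a real imprecision. You say the cellular induction reduces the check to ``the corresponding statements for the underlying pair $(L,\phi^*N)$ applied to the cofibrant objects $A,B$ \emph{and to the mapping objects} $\phi^*N\tilde{\qq}\ecal_{top}^H(a,b)$,'' and then assert that ``these reduced statements are exactly the weak monoidal Quillen conditions.'' But the weak monoidal Quillen conditions for $(L,\phi^*N)$ only control $L$ on \emph{cofibrant} inputs, and the mapping objects $\phi^*N\tilde{\qq}\ecal_{top}^H(a,b)$ (or their images $\tilde{\qq}\ecal_{top}^H(a,b)$) are not known to be cofibrant. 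So the reduction you describe is not literally to the weak monoidal Quillen conditions; an extra ingredient is needed.

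The paper handles this by first computing $L'$ explicitly on free modules,
\[
L'\bigl(\phi^*N\tilde{\qq}\ecal_{top}^H(-,a)\bigr)\;\cong\;\tilde{\qq}\ecal_{top}^H(-,a)\otimes L\,\mathrm{sym}(\qq[1]),
\]
which together with $F_g\square F_k\cong F_{g\smashprod k}$ reduces both the unit condition and the comparison $m$ on free modules to the single question of whether the maps $L\,\mathrm{sym}(\qq[1])\to\mathrm{sym}(\widetilde{\qq}S^1)$ and $L\,\mathrm{sym}(\qq[1])\to L\,\mathrm{sym}(\qq[1])\otimes L\,\mathrm{sym}(\qq[1])$ are weak equivalences; these follow from Shipley's result. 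To conclude levelwise one then needs that tensoring with the cofibrant object $L\,\mathrm{sym}(\qq[1])$ preserves all weak equivalences in $Sp^\Sigma(\sqq\leftmod)$, not just those between cofibrant objects, and the paper invokes this separately (the proof of \cite[Corollary 3.4]{shiHZ}). Your sketch omits both the explicit computation of $L'$ on free modules and this extra flatness input, which is why the final appeal to Shipley's weak monoidal Quillen conditions alone does not quite close the argument.
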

\begin{proof}
The first statement is a routine exercise in manipulating coends. 
The second requires some more work. It is easy to see that 
$\phi^*N$ induces a symmetric monoidal functor 
$\rightmod\tilde{\qq} \ecal_{top}^H \to 
\rightmod\phi^* N \tilde{\qq}  \ecal_{top}^H$. 

We must show that $\eta \colon L' (\phi^* N \tilde{\qq} \ecal_{top}^H(-, S_H))
\to \tilde{\qq} \ecal_{top}^H(-, S_H)$ is a weak equivalence
and that for cofibrant
$\phi^* N \tilde{\qq} \ecal_{top}^H$-modules $X$ and $Y$,
the map $m \colon L' (X \square Y) \to L'X \square L' Y$
(which exists since $\phi^*N$ is monoidal)
is a weak equivalence.
It suffices to prove the second condition for 
the free modules of $\rightmod \phi^* N \tilde{\qq} \ecal_{top}^H$,
since these generate the homotopy category. 
Thus, to prove both statements, 
we only need consider the behaviour of $L'$
on the free modules. 
Let $\textrm{sym} (\qq[1])$ denote the unit of 
$Sp^\Sigma(\ch( \qq \leftmod)_+)$ and 
$\textrm{sym} (\widetilde{\qq} S^1)$ be the unit  of
$Sp^\Sigma(\sqq \leftmod)$, these are both cofibrant.
Then 
\[
L' (\phi^* N \tilde{\qq} \ecal_{top}^H(-, a)) \cong
(\tilde{\qq} \ecal_{top}^H(-, a)) \otimes L \textrm{sym} (\qq[1])
\]
Recall the following three points: there is a natural isomorphism 
$F_g \square F_k \cong F_{g \smashprod k}$,
weak equivalences are defined levelwise in categories of right modules
and smashing with a cofibrant object of $Sp^\Sigma(\sqq \leftmod)$
preserves weak equivalences (see the proof of \cite[Corollary 3.4]{shiHZ}). 
Thus all we need prove is that 
$L \textrm{sym} (\qq[1]) \to \textrm{sym} (\widetilde{\qq} S^1)$ 
and 
$L \textrm{sym} (\qq[1]) \to L \textrm{sym} (\qq[1]) \otimes L \textrm{sym} (\qq[1])$
are weak equivalences. This result is part of \cite[Proposition 4.4]{shiHZ},
which states that the adjunction $(L, \phi^* N)$
is a monoidal Quillen equivalence.
\end{proof}

By \cite[Proposition 4.4, Lemma 4.8 and the proof of Theorem 1.2]{shiHZ}
the functors $\phi^* N$, $D$ and $\widetilde{\qq}$ preserve all weak equivalences.
Thus we do not need to consider derived functors
in the following work.

Now we give a monoidal isomorphism of categories
enriched over graded $\qq$-modules between 
$\h_* \ecal_t^H$ and $\pi_* \ecal^H_{top}$. 
Consider $\pi_* \ecal_{top}^H (a,b)$ which we define as 
$[S, \ecal_{top}^H (a,b)]_*^{\Sigma}$, 
graded maps in the homotopy category of symmetric spectra. 
We can apply the functor $\widetilde{\qq}$
to obtain a map as below, with the right hand side the set of  graded maps 
in the homotopy category of symmetric spectra in simplicial 
$\qq$-modules.
\[
[S, \ecal_{top}^H (a,b)]_*^{\Sigma}
\to 
[\widetilde{\qq}S, \widetilde{\qq}\ecal_{top}^H (a,b)]_*^{\sqq}
\]
We call the right hand side of the above $\pi_* \widetilde{\qq}\ecal_{top}^H (a,b)$.
This is an isomorphism since $\ecal_{top}^H (a,b)$
has rational homotopy groups. 
Furthermore this preserves the monoidal structures as we now explain. 
Taking the smash product of symmetric spectra
we obtain the following maps. 
\begin{eqnarray*}
[S, \ecal_{top}^H (a,b)]_*^{\Sigma} \otimes [S, \ecal_{top}^H (c,d)]_*^{\Sigma}
& \longrightarrow &
[S, \ecal_{top}^H (a,b) \smashprod \ecal_{top}^H (c,d)]_*^{\Sigma} \\
& \longrightarrow &
[S, \ecal_{top}^H (a \smashprod c,b \smashprod d) ]_*^{\Sigma}
\end{eqnarray*}
This defines the monoidal structure on $\pi_* \ecal_{top}^H$,
one defines a monoidal structure on $\pi_* \widetilde{\qq} \ecal_{top}^H$ similarly.
We can apply $\widetilde{\qq}$ to the various stages of the above
and obtain a large commuting diagram which implies that 
$\widetilde{\qq} \colon \pi_* \ecal_{top}^H \to \pi_* \widetilde{\qq} \ecal_{top}^H$ 
is an isomorphism of symmetric monoidal enriched categories.
We repeat this using $\phi^* N$ and $D$ noting that 
$D \phi^* N \widetilde{\qq} S$ is weakly equivalent to $\qq$
and that for any chain complex $X$, $\h_* X \cong [\qq,X]_*^{\ch}$
graded maps in the homotopy category of rational chain complexes. 
Thus we have obtained an isomorphism of symmetric monoidal enriched categories 
$\pi_* \ecal_{top}^H \to \h_* D \phi^* N \widetilde{\qq} \ecal_{top}^H$. 
This section is summarised in the following, 
see also \cite[Theorem 4.1]{greshi}.

\begin{theorem}\label{thm:finiteEtopisEt}
Let $\ecal_t^H$ denote the symmetric monoidal $\ch( \qq \leftmod)$-enriched 
category $D \phi^* N \tilde{\qq} \ecal_{top}^H$.
There is a zig-zag of monoidal Quillen equivalences between
$\rightmod \ecal_{top}^H$ (enriched over
$Sp^\Sigma_+$) and a category
$\rightmod \ecal_{t}^H$
(which is enriched over $\ch( \qq \leftmod)$).
This zig-zag induces
an isomorphism of monoidal graded $\qq$-categories:
$\pi_*(\ecal_{top}^H) \to \h_* \ecal_t^H$.
\end{theorem}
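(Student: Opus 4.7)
The plan is to assemble the theorem directly from the preceding propositions; there is little new content to prove, just a careful composition of results already obtained. First I would use Lemma \ref{lem:postostable} to replace $(\rightmod \ecal_{top}^H)_+$ by $\rightmod \ecal_{top}^H$ equipped with the model structure induced from the stable $Sp^\Sigma$-structure, so that the unit is cofibrant and the later adjunctions apply cleanly. Then I would chain the three Quillen equivalences provided by Proposition \ref{prop:moduleadjunctions}: $(\widetilde{\qq}, U')$ carries us from $\rightmod \ecal_{top}^H$ into $\rightmod \widetilde{\qq} \ecal_{top}^H$; the pair $(L', \phi^* N)$ goes from $\rightmod \phi^* N \widetilde{\qq} \ecal_{top}^H$ back into $\rightmod \widetilde{\qq} \ecal_{top}^H$ (this is the backwards arrow in the zig-zag); and finally $(D, R')$ moves us into $\rightmod D \phi^* N \widetilde{\qq} \ecal_{top}^H = \rightmod \ecal_t^H$. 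The preceding proposition already verifies that $(\widetilde{\qq}, U')$ and $(D, R')$ are strong symmetric monoidal Quillen equivalences and that $(L', \phi^* N)$ is a (weak) symmetric monoidal Quillen equivalence, so the zig-zag has the required monoidal type.

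For the second assertion, I would recycle the calculation carried out in the paragraphs immediately before the theorem. The key inputs are: $\ecal_{top}^H(a,b)$ has rational homotopy groups for every pair $a,b \in \gcal_{top}^H$ (so that $\widetilde{\qq}$ is an isomorphism on $\pi_*$ of mapping spectra); the pair $(L, \phi^* N)$ is a Quillen equivalence between $Sp^\Sigma(\sqq\leftmod)$ and $Sp^\Sigma(\ch(\qq\leftmod)_+)$, giving an identification of $\pi_*\widetilde{\qq}\ecal_{top}^H$ with $\pi_* \phi^* N \widetilde{\qq}\ecal_{top}^H$; and the fact that $D\phi^* N \widetilde{\qq} S$ is weakly equivalent to the unit $\qq$ of $\ch(\qq\leftmod)$, together with $\h_* X \cong [\qq, X]_*^{\ch}$ for any rational chain complex $X$. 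Composing these three isomorphisms yields the desired isomorphism $\pi_* \ecal_{top}^H \to \h_* \ecal_t^H$ of graded $\qq$-categories.

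Finally, to see that this isomorphism is monoidal I would note that for each of $\widetilde{\qq}$, $\phi^* N$, and $D$ the natural transformations relating $F(X) \otimes F(Y)$ to $F(X \otimes Y)$ (and similarly for units) are all weak equivalences on cofibrant inputs — strongly so for $\widetilde{\qq}$ and $D$, and by \cite[Proposition 4.4]{shiHZ} for $\phi^* N$. Applying these natural transformations degreewise to the generators $[S, \ecal_{top}^H(a,b)]_*^\Sigma$, $[S,\ecal_{top}^H(c,d)]_*^\Sigma$ produces a commuting diagram identifying the composition in $\pi_*\ecal_{top}^H$ with the composition in $\h_* \ecal_t^H$ under the isomorphism. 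The main obstacle, if any, is bookkeeping: checking that all the compatibility squares for the three monoidal (or lax monoidal) functors assemble correctly, but since every relevant mapping object lives in a rational stable category where the relevant adjunctions have already been verified to respect the monoidal structure up to weak equivalence, each square commutes up to weak equivalence and hence becomes genuinely commutative after passing to $\h_*/\pi_*$.
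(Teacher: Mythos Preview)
Your proposal is correct and matches the paper's approach essentially step for step: the theorem is presented in the paper as a summary of the section, assembled from Lemma~\ref{lem:postostable}, Proposition~\ref{prop:moduleadjunctions}, the subsequent monoidality proposition, and the paragraphs immediately preceding the theorem establishing the monoidal isomorphism $\pi_*\ecal_{top}^H \to \h_*\ecal_t^H$. The only cosmetic difference is that the paper emphasises that $\widetilde{\qq}$, $\phi^*N$, and $D$ preserve \emph{all} weak equivalences (so no derived functors are needed in the comparison), whereas you phrase the $\phi^*N$ step via the Quillen equivalence; but this amounts to the same thing.
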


\begin{remark}\label{rmk:monoidalnoneqcase}
We consider the above theorem in the case of the trivial
group where our work reduces to that of
\cite{shiHZ}. Write $S_\qq$ for 
$S_{ \{e\} }$.
Here $\gcal_{top}$ has just one object
and $\rightmod \ecal_{top}$ is equivalent to
$S_\qq \leftmod$. Moving from
$\rightmod \ecal_{top}$ to $\rightmod \ecal_t$
is then just applying the functors of
\cite{shiHZ} to the spectrum $S_\qq$.
The resulting chain complex is then weakly equivalent
to $\qq$, as the comparison between
$\rightmod \ecal_t$ and $\rightmod \ecal_a$ below
will prove.
\end{remark}

We now have two $\ch(\qq \leftmod)$-enriched 
categories, $\ecal_t^H$ and $\ecal_a^H$.
While the definition of $\ecal_t^H$ is somewhat
complicated, we have control over its homology.

\section{Comparing $\ecal_t^H$ and $\ecal_a^H$}\label{sec:finitecomp}

We show that the homology of $\ecal_t^H$
is isomorphic, as an enriched category, to $\ecal_a^H$ in
Theorem \ref{thm:hocalc}. Furthermore,
this isomorphism respects the monoidal structures.
Theorem \ref{thm:finiteintrinsicformality}
implies that $\ecal_t^H$ and $\h_* \ecal_t^H$
are `quasi-isomorphic', thus by 
Corollaries \ref{cor:finiteETtoEA} and \ref{cor:monoidalcalc},
$\rightmod \ecal_t^H$, $\rightmod \h_* \ecal_t^H$
and $\rightmod \ecal_a^H$
are Quillen equivalent by strong symmetric monoidal 
adjunctions. 

This completes the main part of this paper,
as we have now shown that our algebraic model is
Quillen equivalent to the category of rational
$G$-spectra. We leave consideration of algebras
and modules over an algebra to the last section. 

We give a summary of \cite[Chapter XIX, Theorem 5.6]{may96} below,
we will use this result in our calculations. Recall that 
for a $G$-spectrum $X$, $\pi^H_*(X) \cong [G/H_+,X]^G_*$.
The right hand side admits an action of the Weyl group $W_G H= N_G H /H$ since
there is a $G$-map $G/H \times W_G H \to G/H$
given by $(gH,nH) \mapsto gn H$. Hence, 
$\pi^H_*(X) \otimes \qq$ is a $\qq W_G H$-module.
Recall the idempotent of the rational Burnside ring 
$e_H$ from section \ref{sec:finitetop},
then we have a $\qq W_G H$-module $\iota_H^*(e_H)_* \pi^H_*(X) \otimes \qq$. 
For the rest of this section we will write
$e_H$ for $\iota_H^*(e_H)_*$.
\begin{theorem}
For $G$-spectra $X$ and $Y$,
there is an isomorphism of graded rational vector spaces
\[
[X,Y]^G_\qq \cong \bigoplus_{(H) \leqslant G}
\hom_\qq(e_H \pi^H_*(X) \otimes \qq, e_H \pi^H_*(X) \otimes \qq)^{W_G H}.
\]
\end{theorem}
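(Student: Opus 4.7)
The plan is to combine the splitting of $G\mcal_\qq$ over conjugacy classes with a Morita-style identification inside each localized factor, exploiting the semisimplicity of $\qq W_G H$.

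First I would apply Corollary \ref{cor:finitesplitting}, which gives a Quillen equivalence $G\mcal_\qq \simeq \prod_{(H)\leqslant G} L_{E\langle H\rangle} G\mcal_\qq$, and therefore
\[
[X,Y]^G_\qq \;\cong\; \bigoplus_{(H)\leqslant G} [X,Y]^{G|H}_{\qq,*}
\]
where I write $[-,-]^{G|H}_{\qq,*}$ for graded maps in $\ho L_{E\langle H\rangle} G\mcal_\qq$ and use that $G$ has finitely many conjugacy classes of subgroups to turn the product into a direct sum. This reduces the theorem to showing, for each fixed $H$, that
\[
[X,Y]^{G|H}_{\qq,*} \;\cong\; \hom_\qq\bigl(e_H\pi_*^H(X)\otimes\qq,\; e_H\pi_*^H(Y)\otimes\qq\bigr)^{W_G H}.
\]

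Next I would analyse the factor at $H$. By Lemma \ref{lem:WElocal}, $G/H_+$ is a compact generator of $L_{E\langle H\rangle} G\mcal_\qq$, and the represented functor is $[G/H_+, Z]^{G|H}_{\qq,*} \cong e_H\pi_*^H(Z)\otimes\qq$. I would next compute the graded endomorphism ring of this generator: using tom Dieck's splitting of $A(G)\otimes\qq$ one has
\[
[G/H_+, G/H_+]^{G|H}_{\qq,*} \;\cong\; e_H\,\pi_*^H(G/H_+)\otimes\qq \;\cong\; \qq W_G H
\]
concentrated in degree zero, since applying $e_H$ to the Burnside module $A(H,G)\otimes\qq$ projects onto the summand indexed by the conjugacy class of $H$, which is exactly $\qq W_G H$. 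Moreover the action of this endomorphism ring on $e_H\pi_*^H(Z)\otimes\qq$ is the natural $\qq W_G H$-action coming from the $G$-map $G/H\times W_G H\to G/H$.

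With the generator and its endomorphism ring in hand, the final step is a standard argument for stable model categories with a single compact generator: the functor $e_H\pi_*^H(-)\otimes\qq$ is conservative on $\ho L_{E\langle H\rangle} G\mcal_\qq$, and the target category inherits the structure of graded $\qq W_G H$-modules. Because $W_G H$ is finite and we work rationally, $\qq W_G H$ is semisimple by Maschke, so every graded $\qq W_G H$-module is projective and all higher $\mathrm{Ext}$-groups vanish; the resulting spectral sequence (or equivalently, the fact that the $t$-structure on the generated triangulated category collapses) degenerates to give the stated Hom-isomorphism
\[
[X,Y]^{G|H}_{\qq,*} \;\cong\; \hom_{\qq W_G H}\bigl(e_H\pi_*^H(X)\otimes\qq,\; e_H\pi_*^H(Y)\otimes\qq\bigr),
\]
and $\hom_{\qq W_G H}(-,-)=\hom_\qq(-,-)^{W_G H}$.

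The main obstacle is the last step: phrasing the Morita-type recognition cleanly without circularity with respect to the comparison theorems being proved later in the paper. In practice one would either invoke Schwede–Shipley's classification of stable model categories with a single compact generator (so that $\ho L_{E\langle H\rangle} G\mcal_\qq$ is equivalent to the derived category of $\qq W_G H$) together with semisimplicity to collapse to graded modules, or give a direct argument by building a resolution of $X$ by wedges of suspensions of $G/H_+$ using that every $\qq W_G H$-module is free on a summand and that fibrant replacement in $L_{E\langle H\rangle} G\mcal_\qq$ sees only the $H$-fixed information. The remaining bookkeeping is routine.
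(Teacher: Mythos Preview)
The paper does not actually prove this theorem: it is introduced with ``We give a summary of \cite[Chapter XIX, Theorem 5.6]{may96} below, we will use this result in our calculations,'' and is treated as external input for the computations of section~\ref{sec:finitecomp}. (Note also the evident typo in the displayed statement: the second argument of $\hom_\qq$ should be $e_H\pi^H_*(Y)\otimes\qq$, as you tacitly correct.)

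Your proposal is therefore an independent derivation from the paper's own machinery rather than a reconstruction of the paper's argument. The outline is sound: split via Corollary~\ref{cor:finitesplitting}, identify $G/H_+$ as a compact generator of the $H$-local piece with endomorphism ring $\qq W_G H$ concentrated in degree zero, and use semisimplicity of $\qq W_G H$ to collapse the comparison to graded $\hom$. The circularity worry you flag is not actually a problem: Corollary~\ref{cor:finitesplitting} and Lemma~\ref{lem:WElocal} are proved before this theorem is invoked, and the endomorphism computation is the content of Lemma~\ref{lem:hogroupcalc}, whose proof (via geometric fixed points and the tom~Dieck splitting) is independent of the theorem in question---you would only need to reorder or inline that calculation. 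What your route buys is a proof internal to the paper's framework; what the paper buys by citing \cite{may96} is that the result is available as a black box before any of the Morita machinery is set up.

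One point genuinely deserves more than ``routine bookkeeping'': for \emph{arbitrary} (non-compact) $X$ and $Y$, invoking a universal coefficient or Adams spectral sequence requires a convergence argument. The cleaner path is the one you sketch second: semisimplicity of $\qq W_G H$ lets you realise $e_H\pi_*^H(X)\otimes\qq$ by a map from a wedge of shifts of retracts of $G/H_+$, and conservativity of $e_H\pi_*^H(-)\otimes\qq$ (Lemma~\ref{lem:WElocal}) forces that map to be an equivalence in $\ho L_{E\langle H\rangle}G\mcal_\qq$. Once every object is a wedge of generator summands, both sides of the claimed isomorphism are computed directly, with no finiteness hypothesis needed.
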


We can relate the above result to our work via the following isomorphisms. 
Recall the space $E \langle H \rangle$ which is the cofibre of the map
$E[<H]_+ \to E[\leqslant H]_+$ and is 
rationally equivalent to $e_H S$. 
We write $[X,Y]_\qq^{G|H}$ for maps in the homotopy category of
$L_{E \langle H \rangle} G \mcal_\qq$
and $\fibrep_H$ for fibrant replacement in this model category.
The $G$-spectra $\fibrep_H Y$ and $e_H Y$ are rationally equivalent, 
since $e_H Y$ is $E \langle H \rangle$-local, which
gives us the third isomorphism below.
\[
[X,Y]^G_\qq \cong 
\bigoplus_{(H) \leqslant G} [X,Y]_\qq^{G|H} \cong 
\bigoplus_{(H) \leqslant G} [X,\fibrep_H Y]_\qq^{G} \cong 
\bigoplus_{(H) \leqslant G} [X,e_H Y]_\qq^{G}
\]
If $X$ and $Y$ are $S_H$-modules then 
$\pi_*^H(X) \cong e_H \pi^H_*(X) \otimes \qq$
and $[X,Y]^G_\qq \cong [X,Y]^{S_H}_*$,
since every $S_H$-module is 
$S^0_\mcal \qq \smashprod E \langle H \rangle$-local
(Lemma \ref{lem:SHobject}). 
Thus, for $S_H$-modules $X$ and $Y$, 
we have an isomorphism of graded rational vector spaces
\[
[X,Y]^{S_H}_* \cong \hom_\qq(\pi^H_*(X), \pi^H_*(Y) )^{W_G H}.
\]

\begin{lemma}\label{lem:hogroupcalc}
For $gH \in W_G H$ we have a map 
$* \to G/H$ which sends the point to $gH$. 
Using the unit map of $S_H$, this induces a map of $G$-spectra 
$\widetilde{gH} \colon  S^0 \to (\cofrep G/H_+ ) \smashprod S_H$.
The assignment $gH \mapsto \widetilde{gH}$
induces an isomorphism
\[
\qq W_G H \to \pi_*^H ((\cofrep G/H_+ ) \smashprod S_H ).
\]
\end{lemma}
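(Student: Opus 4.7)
The plan is to compute $\pi_*^H((\cofrep G/H_+) \smashprod S_H)$ explicitly as a $\qq W_G H$-module, and then verify that the assignment $gH \mapsto \widetilde{gH}$ realises the resulting isomorphism. As a preliminary reduction, by Lemma \ref{lem:SHobject} the spectrum $S_H$ is $\pi_*$-isomorphic to $S^0_\mcal \qq \smashprod E \langle H \rangle$, and $E \langle H \rangle$ represents the idempotent $e_H$ rationally (as established in Section \ref{sec:finitetop}), so
\[
\pi_*^H\bigl((\cofrep G/H_+) \smashprod S_H\bigr) \;\cong\; e_H \pi_*^H(G/H_+) \otimes \qq.
\]

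The heart of the proof is identifying the right-hand side. I would apply tom Dieck's splitting theorem to the $H$-fixed points of $\Sigma^\infty G/H_+$ (regarding $G/H$ as an $H$-space by restriction):
\[
(\Sigma^\infty G/H_+)^H \;\simeq\; \bigvee_{(K)_H \leqslant H} \Sigma^\infty \bigl( EW_H K_+ \smashprod_{W_H K} (G/H)^K \bigr)_+.
\]
The restricted idempotent $\iota_H^*(e_H) \in A(H) \otimes \qq$ kills every summand indexed by $K <_H H$ and retains only the summand $(K) = (H)$: no proper subgroup of $H$ is $G$-conjugate to $H$ by a cardinality argument, so $\iota_H^*(e_H)$ equals the top idempotent $e_H^H$ of $A(H) \otimes \qq$. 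Since $W_H H = \{e\}$ and $(G/H)^H = N_G H/H = W_G H$, the retained summand is simply $\Sigma^\infty (W_G H)_+$. Therefore
\[
e_H \pi_*^H(G/H_+) \otimes \qq \;\cong\; \pi_*\bigl(\Sigma^\infty (W_G H)_+\bigr) \otimes \qq,
\]
which is $\qq W_G H$ concentrated in degree zero.

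Finally I would check that the map in the statement realises this isomorphism. By construction $\widetilde{gH}$ is the element of $\pi_0^H$ corresponding to the $H$-fixed point $gH \in (G/H)^H = W_G H$, composed with the unit of $S_H$; under the identification above it is precisely the canonical basis vector labelled by $gH$ in the free $\qq$-module on $W_G H$. The Weyl group action on the target, coming from the $G$-map $G/H \times W_G H \to G/H$ defined just before the Lemma, restricts on $H$-fixed points to the regular left translation action of $W_G H$ on itself, so the assignment $gH \mapsto \widetilde{gH}$ is a homomorphism of left $\qq W_G H$-modules. As it sends a basis to a basis, it is an isomorphism. The main obstacle is pinning down the identification of the $e_H$-summand in the tom Dieck splitting with $(G/H)^H$ and tracking the Weyl group action through the splitting; once these identifications are made, the result is a direct comparison of basis elements.
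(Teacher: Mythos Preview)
Your proof is correct and reaches the same conclusion as the paper, but by a slightly different route. The paper replaces $S_H$ by $E\langle H\rangle \smashprod S^0_\mcal\qq$ as you do, but then observes directly that $E\langle H\rangle$, restricted to $H$, is $H$-equivariantly equivalent to $\widetilde{E}\fscr_H$ (for $\fscr_H$ the family of proper subgroups of $H$), so that taking $H$-fixed points after smashing with it \emph{is} the geometric fixed point functor $\Phi^H$. One then uses that $\Phi^H$ is monoidal and that $\Phi^H(\Sigma^\infty G/H_+)\simeq \Sigma^\infty (G/H)^H_+ = \Sigma^\infty (W_G H)_+$. Your argument instead applies the tom Dieck splitting to the categorical $H$-fixed points of $\Sigma^\infty G/H_+$ and uses the restricted idempotent $\iota_H^*(e_H)=e_H^H$ to isolate the top summand. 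These are two descriptions of the same phenomenon: the $K=H$ summand of the tom Dieck splitting is precisely the geometric fixed points, and rationally the idempotent $e_H^H$ is exactly the projection onto that summand. The paper's version is a little more streamlined; yours has the virtue of making the $W_G H$-module structure and the identification of basis elements more explicit than the paper, which simply asserts at the end that the specified assignment is ``a particular choice of isomorphism''.
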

\begin{proof}
By the proof of Lemma \ref{lem:SHobject},
we can replace $S_H$ by $E \langle H \rangle \smashprod S^0_\mcal \qq$, 
so the above is isomorphic to 
$\pi_*(((\cofrep G/H_+ ) \smashprod E \langle H \rangle \smashprod S^0_\mcal \qq)^H)$.
Let $\fscr_H$ be the family of proper subgroups of $H$, then we have a cofibre
sequence of $H$-spaces $(E \fscr_H)_+ \to S^0 \to \widetilde{E }\fscr_H$.
Now $E[ \leqslant_G H]$
is $H$-equivariantly weakly equivalent to $S$,
so $E \langle H \rangle$ is
$H$-equivariantly weakly equivalent to
$\widetilde{E }\fscr_H$.
Thus we have an isomorphism (in fact one can define 
$\Phi^H$ as $(\widetilde{E }\fscr_H \smashprod (-))^H$)
\[
\pi_*(( \cofrep G/H_+  \smashprod E \langle H \rangle \smashprod S^0_\mcal \qq)^H) 
\cong \pi_*(\Phi^H ( \cofrep G/H_+  \smashprod S^0_\mcal \qq)).
\]
Since $\Phi^H$ commutes with smash products of 
cofibrant objects, cofibre sequences and coproducts
(such as those used to define $S^0_\mcal \qq$)
it follows that the above is isomorphic to 
$\pi_*(\Phi^H (\cofrep G/H_+ ) ) \otimes \qq$.
The following is standard:
$\Phi^H (\cofrep G/H_+ ) \simeq \Sigma^\infty (G/H^H)
= \Sigma^\infty W_G H$, the suspension spectrum of a finite set.
Thus we have proven that the groups
$\pi_*(\Phi^H (\cofrep G/H_+ )) \otimes \qq$,
$\pi_*(W_G H_+) \otimes \qq$ and $\qq W_G H$ are isomorphic.
Thus the groups $\pi_*^H ((\cofrep G/H_+ ) \smashprod S_H )$
and $\qq W_G H$ are isomorphic. 
It follows that the map specified in the lemma
is a particular choice of isomorphism. 
\end{proof}

The same method as 
above proves that the maps below are isomorphisms 
for $i \geqslant 0$. 
The second map is induced by 
the smash product of $S_H$-modules.
\[
(\qq W_G H)^{\otimes_i} \to 
(\pi_*^H (\cofrep G/H_+  \smashprod S_H ))^{\otimes_i}
\to \pi_*^H ((\cofrep G/H_+ )^{\smashprod_i} \smashprod S_H ).
\]
It follows that for every $i, j \geqslant 0$, 
we have an isomorphism $\alpha = \alpha_{i,j}$
induced by the smash product:
\[
\pi_*^H ((\cofrep G/H_+ )^{\smashprod_i} \smashprod S_H ) \otimes
\pi_*^H ((\cofrep G/H_+ )^{\smashprod_j} \smashprod S_H ) \to 
\pi_*^H ((\cofrep G/H_+ )^{\smashprod_{i+j}} \smashprod S_H ).
\]

Recall the $\gr(\qq \leftmod)$-enriched category $\pi_* \ecal_{top}^H$, 
for $a$ and $b$ in $\gcal_{top}^H$,
$\pi_* \ecal_{top}^H(a,b) = \pi_* (\underhom(a,b))$.
This category is symmetric monoidally isomorphic
to $\h_* \ecal_t^H$ (Theorem \ref{thm:finiteEtopisEt}).
We also have the $\gr(\qq \leftmod)$-enriched category on 
object set $\gcal_{top}^H$
with morphism object defined by 
$[a,b]^{S_H}_*$, graded maps in homotopy category of $S_H$-modules
(these morphism objects are rational 
since $S_H$ is fibrant in $G \mcal_\qq$).
This symmetric monoidal enriched category, which we call $\ho \ecal_{top}^H$, 
is isomorphic to $\pi_* \ecal_{top}^H$
via the adjunctions of section \ref{sec:finitetop}.
Furthermore, since these adjunctions are
symmetric monoidal, so is this isomorphism. 
For the purposes of calculations, it is easiest to work with 
objects of the form $[a,b]^{S_H}_*$.

\begin{proposition}
There is an isomorphism (specified in the proof below) 
of symmetric monoidal
$\gr(\qq \leftmod)$-enriched categories between 
$\ho \ecal_{top}^H$ and the full 
$\gr(\qq \leftmod)$-enriched category 
on the objects $\pi_*^H(a) \in \gr(\qq W_G H \leftmod)$, 
for $a \in \gcal_{top}^H$. We denote this category by 
$\pi_* \gcal_{top}^H$.
\end{proposition}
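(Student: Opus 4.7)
The plan is to define the functor $F\colon \ho \ecal_{top}^H \to \pi_* \gcal_{top}^H$ to be the identity on objects (both categories have object set $\gcal_{top}^H$) and, on morphism objects, to be the map
\[
F_{a,b} \colon [a,b]_*^{S_H} \longrightarrow \hom_\qq(\pi_*^H(a), \pi_*^H(b))^{W_G H}
\]
that sends a homotopy class $[f]$ to the induced map $\pi_*^H(f)$. The isomorphism of graded rational vector spaces displayed just before Lemma \ref{lem:hogroupcalc} shows that $F_{a,b}$ is an isomorphism in $\gr(\qq\leftmod)$. That $F$ respects composition and identities is immediate from the functoriality of $\pi_*^H$ together with the standard fact that composition in $\ho \ecal_{top}^H$ is induced by composition of representatives.

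The substantive content is the compatibility of $F$ with the symmetric monoidal structures. On objects, for $a, b \in \gcal_{top}^H$ one must identify $\pi_*^H(a \smashprod_{S_H} b)$ with $\pi_*^H(a) \otimes_\qq \pi_*^H(b)$ as graded $\qq W_G H$-modules; this is exactly the content of the isomorphisms $\alpha_{i,j}$ constructed after Lemma \ref{lem:hogroupcalc}, which were obtained by iterating the calculation $\pi_*^H((\cofrep G/H_+)\smashprod S_H) \cong \qq W_G H$ and using that the smash product in $S_H\leftmod$ induces the diagonal on the Weyl group factor. Unit compatibility is the case $i=0$, namely $\pi_*^H(S_H) \cong \qq$ with its trivial $W_G H$ action.

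On morphism objects, I need the square
\[
\begin{array}{ccc}
{[a,b]^{S_H}_* \otimes [c,d]^{S_H}_*} & \longrightarrow & [a \smashprod c, b \smashprod d]^{S_H}_* \\
\downarrow & & \downarrow \\
\hom_\qq(\pi_*^H a, \pi_*^H b)^{W_G H} \otimes \hom_\qq(\pi_*^H c, \pi_*^H d)^{W_G H} & \longrightarrow & \hom_\qq(\pi_*^H(a \smashprod c), \pi_*^H(b \smashprod d))^{W_G H}
\end{array}
\]
to commute, where the top horizontal is the monoidal structure map of $\ho \ecal_{top}^H$ (smash product of representatives) and the bottom is tensor product of homomorphisms followed by the identification from $\alpha$. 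This reduces to checking that for representatives $f\colon a\to b$ and $g\colon c\to d$, the map $\pi_*^H(f\smashprod g)$ agrees with $\pi_*^H(f) \otimes \pi_*^H(g)$ under $\alpha$, which is simply the naturality of $\alpha_{i,j}$ in both slots. The associativity, unit, and symmetry coherence diagrams for $F$ then follow automatically from the corresponding coherence for the smash product in $S_H\leftmod$ together with the naturality and multiplicativity of the $\alpha_{i,j}$.

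The main obstacle is the calculational input in step~4, that is, pinning down that $\pi_*^H$ converts the smash product of objects of $\gcal_{top}^H$ into the tensor product of $\qq W_G H$-modules; once Lemma \ref{lem:hogroupcalc} and its iterated version are in hand, the rest of the proof is bookkeeping about coends and coherence, exactly analogous to the verification for $\ecal_a^H$ made earlier in the paper.
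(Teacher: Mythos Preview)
Your proof is correct and follows essentially the same route as the paper: the functor on morphism objects is $[f]\mapsto \pi_*^H(f)$ (equivalently, the adjoint of the composition action $[a,b]^{S_H}_*\otimes[S,a]^H_*\to[S,b]^H_*$), it is an isomorphism by the displayed identification preceding Lemma~\ref{lem:hogroupcalc}, and the monoidal compatibility reduces to the naturality statement $\pi_*^H(f\smashprod g)=\pi_*^H(f)\otimes\pi_*^H(g)$ under $\alpha$. Your write-up is in fact somewhat more careful than the paper's in spelling out the unit case and the coherence conditions; the only cosmetic discrepancy is that the paper regards the objects of $\pi_*\gcal_{top}^H$ as the images $\pi_*^H(a)$ rather than literally $\gcal_{top}^H$, but the organisational table uses your convention and the indexing bijection makes this immaterial.
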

\begin{proof}
Thus for $a$ and $b$ in $\gcal_{top}^H$, we have 
$\pi_* \gcal_{top}^H (a,b) = \hom_\qq (\pi_*^H(a),\pi_*^H(b))^{W_G H}$.
Note that each $\pi_*^H(a) = [S,a]^H_*$ is concentrated in degree zero.
The forgetful functor induces a map $[a,b]^{S_H} \to [a,b]^{H}$,
combining this with composition gives a map of graded
$\qq W_G H$-modules
$
[a,b]^{S_H}_* \otimes_\qq [S,a]^H_* \to [S,b]^H_*.
$
This map has an adjoint, which is a map of graded 
$\qq$-modules
$
[a,b]^{S_H}_* \to \hom_\qq( [S,a]^H_* , [S,b]^H_*)^{W_G H}.
$
Thus we have a $\gr(\qq \leftmod)$-enriched functor
\[
\pi_*^H(-) = [S,-]^H_* \colon \ho \ecal_{top}^H \to \pi_* \gcal_{top}^H.
\]
This functor is an isomorphism of $\gr(\qq \leftmod)$-enriched categories, 
since (as proven above)
$[X,Y]^{S_H}_*$ and $\hom_\qq(\pi^H_*(X), \pi^H_*(Y) )^{W_G H}$
are isomorphic.
We now consider the diagram below, where everything is concentrated in degree zero. 
\[
\xymatrix{
[a,b]_*^{S_H} \otimes_\qq [c,d]_*^{S_H} \ar[r] \ar[d] &
\hom_\qq (\pi_*^H(a),\pi_*^H(b))^{W_G H} \otimes_\qq
\hom_\qq (\pi_*^H(c),\pi_*^H(d))^{W_G H} \ar[d] \\
[a \smashprod_{S_H} c, b \smashprod_{S_H} d]_*^{S_H} \ar[r]
&
\hom_\qq (\pi_*^H(a) \otimes_\qq \pi_*^H(c),\pi_*^H(b) \otimes_\qq \pi_*^H(d))^{W_G H}
}
\]
The lower horizontal map
uses the isomorphism $\alpha$ (constructed above)
from $\pi_*^H(a) \otimes_\qq \pi_*^H(c)$
to $\pi_*^H(a \smashprod_{S_H} c)$.
This diagram commutes since for maps $f$ and $g$ the smash product of 
$\pi_*^H(f)$ and $\pi_*^H(g)$
is $\pi_*^H(f \smashprod g)$. 
\end{proof}

\begin{theorem}\label{thm:hocalc}
There is a symmetric monoidal isomorphism 
of $\gr( \qq \leftmod)$-enriched 
categories $\ecal_a^H \to \h_* \ecal_{t}^H$. 
\end{theorem}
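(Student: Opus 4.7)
The plan is to build the isomorphism by composing the string of monoidal isomorphisms already assembled in the excerpt, using the previous proposition and Theorem \ref{thm:finiteEtopisEt} as the main inputs. On objects, I define $\psi \colon \ecal_a^H \to \h_* \ecal_t^H$ by sending the $i$-fold tensor power $(\qq W_G H)^{\otimes_i}$ to the $i$-fold smash power $(\cofrep G/H_+)^{\smashprod_i} \smashprod S_H$. This is a bijection $\gcal_a^H \to \gcal_{top}^H$, and by Lemma \ref{lem:hogroupcalc} together with the isomorphism $\alpha_{i,j}$ constructed just after it, we in fact have a canonical isomorphism $a \cong \pi_*^H(\psi(a))$ of $\qq W_G H$-modules for every $a \in \gcal_a^H$, compatible with the monoidal products $\otimes$ on $\gcal_a^H$ and $\smashprod_{S_H}$ on $\gcal_{top}^H$.

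On morphism objects, I assemble $\psi$ as the composite of the following isomorphisms of $\gr(\qq \leftmod)$-enriched categories:
\[
\ecal_a^H(a,b) = \hom_\qq(a,b)^{W_G H}
\overset{\cong}{\longrightarrow}
\hom_\qq(\pi_*^H \psi(a), \pi_*^H \psi(b))^{W_G H}
= \pi_* \gcal_{top}^H(\psi(a),\psi(b)),
\]
the first arrow being induced by the isomorphisms $a \cong \pi_*^H(\psi(a))$ just described. Composing with the isomorphism $\pi_* \gcal_{top}^H \cong \ho \ecal_{top}^H$ of the preceding proposition (which is an isomorphism of symmetric monoidal $\gr(\qq \leftmod)$-enriched categories), then with $\ho \ecal_{top}^H \cong \pi_* \ecal_{top}^H$ (which follows from the identifications of morphism sets established earlier in the section, using that $\underhom(X,Y)$ is an Eilenberg--Mac Lane symmetric spectrum with $\pi_*(\underhom(X,Y)) \cong [X,Y]^{S_H}_*$), and finally with the isomorphism $\pi_* \ecal_{top}^H \cong \h_* \ecal_t^H$ of Theorem \ref{thm:finiteEtopisEt}, yields the desired $\gr(\qq \leftmod)$-enriched isomorphism.

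The main obstacle is checking that this composite respects the symmetric monoidal structures, since the statement is a monoidal isomorphism. Of the four pieces in the composite, three are already known to be monoidal isomorphisms: the last two by Theorem \ref{thm:finiteEtopisEt} and the preceding proposition respectively, and the identification of $\ho \ecal_{top}^H$ with $\pi_* \ecal_{top}^H$ is monoidal because all the Quillen equivalences in Section \ref{sec:finitetop} and the adjunction with symmetric spectra are (strong) symmetric monoidal. So the real work is verifying that the very first arrow above, which transports the $W_G H$-action through the canonical isomorphism $a \cong \pi_*^H(\psi(a))$, is compatible with the monoidal pairings $\otimes_\qq$ on $\ecal_a^H$ and on $\pi_* \gcal_{top}^H$.

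This reduces, after unwinding definitions, to checking commutativity of a single square whose lower edge is the map $\alpha_{i,j}$ and whose vertical arrows are the pairings; this in turn follows from naturality of the external smash product and the fact (already noted in the proof of the previous proposition) that for maps $f,g$ of $S_H$-modules one has $\pi_*^H(f \smashprod g) = \pi_*^H(f) \otimes \pi_*^H(g)$ under the identification $\alpha$. The compatibility with the unit is immediate since $\psi(\qq) = S_H$ and $\pi_*^H(S_H) \cong \qq$ with trivial $W_G H$-action. Putting these observations together produces the required symmetric monoidal isomorphism $\ecal_a^H \to \h_* \ecal_t^H$.
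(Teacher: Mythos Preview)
Your proof is correct and follows essentially the same route as the paper: both build $\psi$ on objects by sending $(\qq W_G H)^{\otimes_i}$ to $(\cofrep G/H_+)^{\smashprod_i}\smashprod S_H$, then compose the chain $\ecal_a^H \to \pi_*\gcal_{top}^H \to \ho\ecal_{top}^H \to \pi_*\ecal_{top}^H \to \h_*\ecal_t^H$ using Lemma~\ref{lem:hogroupcalc}, the map $\alpha$, the preceding proposition, and Theorem~\ref{thm:finiteEtopisEt}. Your write-up is a bit more explicit than the paper's about why each link is symmetric monoidal; one small over-statement is calling $\underhom(X,Y)$ an Eilenberg--Mac\,Lane spectrum to justify $\ho\ecal_{top}^H\cong\pi_*\ecal_{top}^H$---all that is actually needed (and all the paper uses) is the natural isomorphism $\pi_*(\underhom(X,Y))\cong [X,Y]^{S_H}_*$.
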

\begin{proof}
We replace 
$\h_* \ecal_{t}^H$ by 
$\pi_* \ecal_{top}^H$, which is isomorphic to
$\ho \ecal_{top}^H$ and hence to 
$\pi_* \gcal_{top}^H$. 

The category $\pi_* \gcal_{top}^H$ has object set
given by all smash products of $(\cofrep G/H_+ ) \smashprod S_H$ 
(here we mean the smash product of $S_H$-modules), with 
$S_H$ as the zero fold smash product. Similarly, 
$\ecal_a^H$ has object set given by all tensor products of
$\qq W_G H$ with $\qq$ as the zero fold tensor product.
Define an isomorphism on these object sets by sending
$\qq W_G H^{\otimes_i}$ to $(\cofrep G/H_+ )^{\smashprod_i} \smashprod S_H$
for $i \geqslant 0$.
In our work above, we have specified isomorphisms
\[
(\qq W_G H)^{\otimes_i} \to 
\left( \pi_*^H ((\cofrep G/H_+ ) \smashprod S_H ) \right)^{\otimes_i}
\to \pi_*^H ((\cofrep G/H_+ )^{\smashprod_i} \smashprod S_H ).
\]
These maps induce an isomorphism between
$\hom_\qq ((\qq W_G H)^{\otimes_i}, (\qq W_G H)^{\otimes_j} )^{W_G H}$
and 
\[
\hom_\qq \left( \pi_*^H ((\cofrep G/H_+ )^{\smashprod_i} \smashprod S_H),
\pi_*^H ((\cofrep G/H_+ )^{\smashprod_j} \smashprod S_H) \right)^{W_G H}.
\]
Thus we have an isomorphism
of $\gr( \qq \leftmod)$-enriched 
categories $\ecal_a^H \to \pi_* \gcal_{top}^H$.
Each of these isomorphisms of $\gr( \qq \leftmod)$-enriched 
categories is symmetric monoidal, so the result holds.
\end{proof}

We now know that $\h_* \ecal^H_{t}$ is concentrated in degree zero, 
our next result uses this to provide a comparison between
$\h_* \ecal^H_{t}$ and $\ecal^H_{t}$. 
Let $\ecal, \dcal$ be categories enriched over a model category, 
a map of enriched categories $\psi \colon \ecal \to \dcal$ 
is a \textbf{quasi-isomorphism} if 
it induces an isomorphism on the object sets and 
$\psi \colon \ecal(A,B) \to \dcal(\psi A, \psi B)$ is a 
weak equivalence for all pairs of objects $A$ and $B$.

\begin{theorem}\label{thm:finiteintrinsicformality}
If $\ecal$ is a $\ch( \qq \leftmod)$-category with
$\h_* \ecal$ concentrated in degree zero,
then $\ecal$ is quasi-isomorphic to $\h_* \ecal$
as $\ch( \qq \leftmod)$-categories. An explicit 
zig-zag is constructed below. 
If $\ecal$ is a symmetric monoidal enriched category then
the zig-zag consists of maps of symmetric monoidal
enriched categories.
\end{theorem}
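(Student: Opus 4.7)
The plan is to construct an intermediate $\ch(\qq\leftmod)$-category $C_0 \ecal$ sitting between $\ecal$ and $\h_* \ecal$ via the connective cover functor, producing the explicit zig-zag
\[
\ecal \overset{i}{\longleftarrow} C_0 \ecal \overset{p}{\longrightarrow} \h_* \ecal.
\]
Define $C_0 \ecal$ to have the same set of objects as $\ecal$ and morphism complexes $C_0\ecal(a,b) := C_0(\ecal(a,b))$, where $C_0$ denotes the $[-1]$-connective cover as introduced in Section~\ref{sec:ETOPtoET}. Since the tensor product of two non-negatively graded chain complexes is again non-negatively graded, the composition of $\ecal$ restricts to a well-defined composition $C_0\ecal(b,c) \otimes_\qq C_0 \ecal(a,b) \to C_0 \ecal(a,c)$, and the identity morphisms $\qq \to \ecal(a,a)$ land in $C_0\ecal(a,a)$. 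Thus $C_0 \ecal$ is a genuine $\ch(\qq\leftmod)$-category and the levelwise inclusion defines an enriched functor $i \colon C_0 \ecal \to \ecal$.

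Next I would define $p \colon C_0 \ecal \to \h_* \ecal$ as the identity on objects, and on morphism complexes as the canonical quotient $C_0 \ecal(a,b) \to \h_0(\ecal(a,b))$ (regarding $\h_* \ecal(a,b)$ as a chain complex concentrated in degree zero, which makes sense by hypothesis). This is compatible with composition because the quotient map $X \to \h_0 X$ factors through $\h_*$, and rationally $\h_*$ commutes with tensor by Künneth. Both $i$ and $p$ are quasi-isomorphisms: $i$ because $\h_n C_0 \ecal(a,b) = \h_n \ecal(a,b)$ for $n \geq 0$ while both sides vanish for $n < 0$ by the assumption on $\h_* \ecal$; $p$ because the hypothesis forces $\h_n(\ecal(a,b)) = 0$ for $n \neq 0$, so the composite $C_0\ecal(a,b) \twoheadrightarrow \h_0\ecal(a,b)$ is a quasi-isomorphism.

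For the monoidal statement, assume $\ecal$ is symmetric monoidal with tensor functor $\otimes_\ecal$ and unit $I$. Equip $C_0 \ecal$ with the same tensor on objects: the structure maps $\ecal(a,c) \otimes_\qq \ecal(b,d) \to \ecal(a \otimes b, c \otimes d)$ restrict to $C_0 \ecal(a,c) \otimes_\qq C_0\ecal(b,d) \to C_0 \ecal(a \otimes b, c \otimes d)$, again using that the tensor of connective complexes is connective. The inclusion $i$ is then strictly symmetric monoidal by construction. For $p$, applying $\h_0$ to these structure maps gives exactly the structure maps making $\h_* \ecal$ a symmetric monoidal enriched category, so $p$ is also a symmetric monoidal enriched functor.

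The main obstacle I anticipate is the bookkeeping required to verify that $C_0 \ecal$ inherits a genuine symmetric monoidal enriched category structure, rather than merely a lax one; this comes down to checking that the relevant associators, unitors and symmetry isomorphisms of $\ecal$ restrict to the connective covers, which follows because they are built out of composition with structural morphisms that themselves preserve non-negative degree. Once this verification is in place, the theorem is immediate from the two quasi-isomorphisms above.
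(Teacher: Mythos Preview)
Your proposal is correct and follows essentially the same approach as the paper: both construct the intermediate category $C_0\ecal$ via the connective cover functor and produce the zig-zag $\ecal \leftarrow C_0\ecal \to \h_*\ecal = \h_0\ecal$. The only difference is presentational: the paper packages the verification that $C_0\ecal$ is an enriched (symmetric monoidal) category and that the two maps are enriched (symmetric monoidal) functors in terms of the adjunctions $(\text{inclusion}, C_0)$ and $(\h_0, \text{inclusion})$ and their (co)units being symmetric monoidal natural transformations, whereas you verify the same facts directly by noting that tensors of non-negatively graded complexes are non-negatively graded.
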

\begin{proof}
We will create a $\ch( \qq \leftmod)$-enriched category $C_0 \ecal$
and a zig-zag of quasi-isomorphisms:
$\ecal \overset{\sim}{\longleftarrow} C_0 \ecal
\overset{\sim}{\longrightarrow} \h_0 \ecal = \h_* \ecal.$

As in section \ref{sec:ETOPtoET}, 
we will use a symmetric monoidal adjunction to construct $C_0 \ecal$. 
We have the $(-1)$-connective cover functor $C_0$, which 
is the right adjoint to the inclusion of $\ch( \qq\leftmod)_+$
into $\ch( \qq \leftmod)$. So for a chain complex $X$, 
$(C_0 X)_n$ is $X_n$ for $n > 0$, zero for $n < 0$
and is given by $\ker (\partial \colon X_0 \to X_{-1})$
for $n=0$. 
This adjunction is strong symmetric monoidal
and furthermore the counit is a symmetric monoidal natural transformation. 
Define $C_0 \ecal$ to have the same set of objects as $\ecal$ and let
$(C_0 \ecal)(a,b) = C_0 (\ecal(a,b))$. This is a symmetric monoidal 
$\ch( \qq\leftmod)$-enriched category. The counit gives
a map of symmetric monoidal $\ch( \qq\leftmod)$-enriched categories
$C_0 \ecal \to \ecal$. This follows from the  
commutative diagram below, where $X \otimes Y \to Z$ 
is a map in $\ch( \qq \leftmod)$.
\[
 \xymatrix{
C_0 X \otimes C_0 Y \ar[r] \ar[d] & C_0(X \otimes Y) \ar[r] \ar[dl] &
C_0 Z \ar[d] \\
X \otimes Y \ar[rr] && Z
} \]
For a chain complex of $\qq$-modules, $X$, we have a map $C_0 X \to \h_0 X$ which sends
$X_i$ to zero for $i > 0$ and sends
$(C_0(X))_0 =\ker(\partial_0) \to \h_0 X$ by the quotient.
We can consider $\h_0$ as a functor
$\ch(\qq \leftmod)_+ \to \qq \leftmod$,
this has a right adjoint
which includes $\qq \leftmod$ into $\ch( \qq \leftmod)_+$ by taking a
$\qq$-module $M$ to the chain complex
with $M$ in degree zero and zeroes elsewhere.
The map $C_0 X \to \h_0 X$ is induced by the
unit of this adjunction.
The functor $\h_0$ is monoidal, as is the inclusion of
$\qq \leftmod$ into $\ch( \qq \leftmod)_+$, thus we obtain
a symmetric monoidal $\ch( \qq \leftmod)$-category $\h_0 \ecal$.
Furthermore, the map $C_0 X \to \h_0 X$ is induced
by the unit of the adjunction which is a symmetric monoidal
natural transformation.
As above we obtain a map of symmetric monoidal 
$\ch( \qq \leftmod)$-enriched categories
$C_0 \ecal \to \h_0 \ecal= \h_* \ecal$,
which is a quasi-isomorphism.
\end{proof}

For a map of $\ch (\qq \leftmod)$-enriched categories
$\psi \colon \ecal \to \dcal$ there is an adjoint pair of
\textbf{extension and restriction of scalars} 
as defined in \cite[Section A.1]{ss03stabmodcat}. 
\[
- \otimes_\ecal \dcal \colon
\rightmod \ecal
\overrightarrow{\longleftarrow}
\rightmod \dcal \colon \psi^*
\]
The left adjoint is given in terms of a coend:
\[
(M \otimes_\ecal \dcal )(a)=
\int^a M(a) \otimes \dcal(-, \psi (a))
\]
the right adjoint simply lets 
$\ecal$ act on a $\dcal$-module via $\psi$. 
These form a Quillen pair that is a Quillen equivalence
if $\psi$ is a quasi-isomorphism by \cite[Theorem A.1.1]{ss03stabmodcat}.
If the map $\psi$ is symmetric monoidal, then it is routine to prove that
$(- \otimes_\ecal \dcal, \psi^*)$ is a strong symmetric monoidal
Quillen pair.

\begin{corollary}\label{cor:finiteETtoEA}
For each subgroup $H$, 
Theorem \ref{thm:finiteintrinsicformality}
specifies a zig-zag of symmetric monoidal quasi-isomorphisms of
$\ch( \qq \leftmod)$-categories.
\[
\ecal_t^H \overset{\sim}{\longleftarrow} C_0 \ecal_t^H
\overset{\sim}{\longrightarrow} \h_* \ecal_t^H 
\]
These quasi-isomorphisms induce a zig-zag of symmetric monoidal Quillen equivalences
of $\ch( \qq \leftmod)$-model categories.
\[
\rightmod \ecal_t^H
\overleftarrow{\longrightarrow} \rightmod C_0 \ecal_t^H
\overrightarrow{\longleftarrow} \rightmod \h_* \ecal_t^H
\]
\end{corollary}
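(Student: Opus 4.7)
The plan is to deduce this corollary directly from the results already established in this section. The first step is to verify that the hypothesis of Theorem \ref{thm:finiteintrinsicformality} applies to $\ecal_t^H$, namely that $\h_* \ecal_t^H$ is concentrated in degree zero. This is immediate from Theorem \ref{thm:hocalc}: we have a symmetric monoidal isomorphism of $\gr(\qq \leftmod)$-enriched categories $\ecal_a^H \to \h_* \ecal_t^H$, and the morphism objects of $\ecal_a^H$ are $\hom_\qq(a,b)^{W_G H}$ where $a,b$ are tensor powers of $\qq W_G H$ regarded as complexes concentrated in degree zero. Hence the morphism complexes of $\ecal_a^H$ and therefore of $\h_* \ecal_t^H$ are concentrated in degree zero.

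With this hypothesis verified, I would apply Theorem \ref{thm:finiteintrinsicformality} directly to $\ecal = \ecal_t^H$. This theorem furnishes the intermediate category $C_0 \ecal_t^H$ together with the zig-zag
\[
\ecal_t^H \overset{\sim}{\longleftarrow} C_0 \ecal_t^H \overset{\sim}{\longrightarrow} \h_* \ecal_t^H
\]
and the explicit construction given in the proof of Theorem \ref{thm:finiteintrinsicformality} shows that both maps are symmetric monoidal $\ch(\qq \leftmod)$-enriched functors that are quasi-isomorphisms on each morphism object. This establishes the first assertion of the corollary.

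For the second assertion, I would invoke the extension/restriction of scalars formalism recalled immediately before the statement of the corollary. For any quasi-isomorphism $\psi\colon \ecal \to \dcal$ of $\ch(\qq \leftmod)$-enriched categories, \cite[Theorem A.1.1]{ss03stabmodcat} provides a Quillen equivalence $(- \otimes_\ecal \dcal, \psi^*)$ between $\rightmod \ecal$ and $\rightmod \dcal$. Applying this to each of the two quasi-isomorphisms above yields the required zig-zag
\[
\rightmod \ecal_t^H \overleftarrow{\longrightarrow} \rightmod C_0 \ecal_t^H \overrightarrow{\longleftarrow} \rightmod \h_* \ecal_t^H.
\]
Because each quasi-isomorphism is symmetric monoidal, the remark following the definition of extension/restriction of scalars gives that the induced adjunctions are strong symmetric monoidal Quillen equivalences.

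The main obstacle, if any, is really just the verification that the monoidal coend formula for extension of scalars is strong monoidal when $\psi$ is itself monoidal; this is the routine coend manipulation sketched earlier for $(-) \otimes_{\ecal_a^H} \gcal_a^H$, and no new ideas are needed. Everything else is bookkeeping, since Theorem \ref{thm:hocalc} does the geometric work of identifying $\h_* \ecal_t^H$ with $\ecal_a^H$ and Theorem \ref{thm:finiteintrinsicformality} does the homological algebra of constructing $C_0 \ecal_t^H$.
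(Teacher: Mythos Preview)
Your proposal is correct and follows exactly the route the paper intends: verify via Theorem \ref{thm:hocalc} that $\h_*\ecal_t^H$ is concentrated in degree zero, apply Theorem \ref{thm:finiteintrinsicformality} to obtain the symmetric monoidal zig-zag, and then invoke the extension/restriction of scalars discussion (with \cite[Theorem A.1.1]{ss03stabmodcat} and the symmetric monoidal remark) to pass to module categories. The paper gives no separate proof of the corollary beyond these ingredients, so your write-up is simply a spelled-out version of what the paper leaves implicit.
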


We note that the above theorem and corollary are 
similar to results in \cite[Section 5]{shi02}. Now we prove that
the monoidal structures of 
$\ecal_a^H$ and $\h_* \ecal_{t}^H$ are equivalent, thus we can complete
our symmetric monoidal comparison between 
$\rightmod \ecal_a^H$ and $\rightmod \ecal_t^H$.

\begin{corollary}\label{cor:monoidalcalc}
The Quillen pair of extension and restriction of 
scalars gives a strong symmetric monoidal
Quillen equivalence (and an equivalence of categories) between 
$\ecal_a^H  \leftmod$ and $\h_* \ecal_{t}^H \leftmod$.
\end{corollary}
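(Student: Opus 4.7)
The plan is to apply the general theory of extension and restriction of scalars (described immediately after Corollary \ref{cor:finiteETtoEA}) to the symmetric monoidal isomorphism $\psi \colon \ecal_a^H \to \h_* \ecal_t^H$ of Theorem \ref{thm:hocalc}. Since $\psi$ is an \emph{isomorphism} of $\gr(\qq\leftmod)$-enriched categories, we view it as a symmetric monoidal isomorphism of $\ch(\qq\leftmod)$-enriched categories by regarding graded objects as chain complexes with zero differential. In particular $\psi$ is trivially a symmetric monoidal quasi-isomorphism, so the remark after Corollary \ref{cor:finiteETtoEA} (combining \cite[Theorem A.1.1]{ss03stabmodcat} with the routine monoidality argument) already gives us a strong symmetric monoidal Quillen equivalence
\[
(\psi^{-1})^* \colon \rightmod \ecal_a^H \overrightarrow{\longleftarrow} \rightmod \h_* \ecal_t^H \colon \psi^*.
\]

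First I would unpack why this is even an equivalence of categories, not merely a Quillen equivalence. Since $\psi$ induces a bijection on object sets and an isomorphism $\ecal_a^H(a,b) \cong \h_* \ecal_t^H(\psi a, \psi b)$ for all pairs $(a,b)$, the functor $\psi^*$ is fully faithful and essentially surjective on the nose, with strict inverse $(\psi^{-1})^*$ (no coend resolution is needed). Both $\psi^*$ and $(\psi^{-1})^*$ preserve and reflect object-wise weak equivalences and fibrations, so we have a Quillen equivalence for free.

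Second, I would verify that $(\psi^{-1})^*$ is strong symmetric monoidal. The point is that the box product on right $\ecal$-modules is defined via a coend that uses only the monoidal product on $\ecal$, and $\psi$ is a symmetric monoidal enriched isomorphism. Thus, for $M, N \in \rightmod \ecal_a^H$,
\begin{eqnarray*}
((\psi^{-1})^*M \square (\psi^{-1})^*N)(a)
& = & \int^{b,c} M(\psi^{-1}b) \otimes_\qq N(\psi^{-1}c) \otimes_\qq \h_*\ecal_t^H(a, b \otimes c) \\
& \cong & \int^{b',c'} M(b') \otimes_\qq N(c') \otimes_\qq \ecal_a^H(\psi^{-1}a, b' \otimes c') \\
& = & (\psi^{-1})^*(M \square N)(a),
\end{eqnarray*}
where the middle isomorphism is the change of variables $b = \psi b'$, $c = \psi c'$, using the symmetric monoidal compatibility $\h_*\ecal_t^H(\psi a', \psi b' \otimes \psi c') \cong \ecal_a^H(a', b' \otimes c')$. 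The unit is preserved since $\psi$ sends the unit of $\ecal_a^H$ to the unit of $\h_* \ecal_t^H$, so the free module on the unit is sent to the free module on the unit.

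There is no serious obstacle here, since everything is a strict isomorphism on the enriched category level; the only thing to be careful about is the bookkeeping for the symmetric monoidal structure on $\rightmod \ecal$, which is exactly the same coend manipulation as in the proof of Theorem \ref{thm:FiniteAlgMorita}. Finally, combining this corollary with Corollary \ref{cor:finiteETtoEA} yields the desired zig-zag of strong symmetric monoidal Quillen equivalences linking $\rightmod \ecal_t^H$ and $\rightmod \ecal_a^H$, completing the algebraic comparison.
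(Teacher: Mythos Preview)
Your proposal is correct and follows essentially the same route as the paper: the corollary is stated without proof because it is an immediate consequence of the symmetric monoidal isomorphism $\psi \colon \ecal_a^H \to \h_*\ecal_t^H$ of Theorem \ref{thm:hocalc} together with the general remarks on extension and restriction of scalars preceding Corollary \ref{cor:finiteETtoEA}. Your additional unpacking (that $\psi^*$ and $(\psi^{-1})^*$ are strict inverses and the coend verification of strong monoidality) simply spells out what the paper leaves as routine.
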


\section{Main Results}\label{sec:mainresults}

\begin{theorem}
For $G$ a finite group, there is a zig-zag of symmetric 
 monoidal Quillen equivalences 
between the category of rational $G$-equivariant spectra
and the algebraic model: $\prod_{(H) \leqslant G} \ch(\qq W_G H \leftmod)$.
\end{theorem}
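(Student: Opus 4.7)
The proof is essentially a matter of concatenating the chain of Quillen equivalences that were established in sections \ref{sec:finitealg} through \ref{sec:finitecomp}; all the substantive work has already been done, and what remains is verifying that the composition is symmetric monoidal.

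The plan is to start from $G\mcal_\qq$ on the topological side and $\prod_{(H) \leqslant G} \ch(\qq W_G H \leftmod)$ on the algebraic side, and work from both ends toward the middle. On the topological side, I would first invoke Corollary \ref{cor:finitesplitting} to pass to the product $\prod_{(H) \leqslant G} L_{E\langle H\rangle} G\mcal_\qq$ via a strong symmetric monoidal Quillen equivalence, then apply Proposition \ref{prop:GspecHtoSHmod} factor-by-factor to reach $\prod_{(H) \leqslant G} S_H \leftmod$, and finally apply Theorem \ref{thm:finitemoritaequiv} factor-by-factor to land in $\prod_{(H) \leqslant G} \rightmod \ecal_{top}^H$. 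On the algebraic side, Theorem \ref{thm:FiniteAlgMorita} provides a strong symmetric monoidal Quillen equivalence between $\ch(\qq W_G H \leftmod)$ and $\rightmod \ecal_a^H$, applied factor-by-factor.

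With both ends translated into the language of right modules over enriched categories, the middle section is handled by Theorem \ref{thm:finiteEtopisEt} (producing the zig-zag of monoidal Quillen equivalences between $\rightmod \ecal_{top}^H$ and $\rightmod \ecal_t^H$), followed by Corollary \ref{cor:finiteETtoEA} (the zig-zag $\rightmod \ecal_t^H \leftarrow \rightmod C_0\ecal_t^H \to \rightmod \h_* \ecal_t^H$ of symmetric monoidal Quillen equivalences), and finally Corollary \ref{cor:monoidalcalc} (the strong symmetric monoidal Quillen equivalence $\rightmod \h_* \ecal_t^H \simeq \rightmod \ecal_a^H$ induced by the isomorphism of Theorem \ref{thm:hocalc}). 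Taking the product of these zig-zags over the conjugacy classes $(H) \leqslant G$ and splicing them between the two translations gives the desired zig-zag.

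The only thing to verify is that the composite adjunctions are all symmetric monoidal in the appropriate sense. Each individual equivalence in the chain was established to be either strong symmetric monoidal or a symmetric monoidal (lax) Quillen equivalence, and the composition of monoidal Quillen equivalences is again a monoidal Quillen equivalence; since all categories involved satisfy the monoid axiom, this composite zig-zag is the statement of the theorem. There is no real obstacle here, as the hard work (the quasi-isomorphism computation in Theorem \ref{thm:hocalc}, the construction of $\ecal_t^H$, and the splitting theorem) has been completed. The mildest point of care is that the zig-zag from Theorem \ref{thm:finiteEtopisEt} passes through categories enriched over different base categories ($Sp^\Sigma$, $Sp^\Sigma(\sqq\leftmod)$, $Sp^\Sigma(\ch(\qq\leftmod)_+)$, and $\ch(\qq\leftmod)$), but each intermediate adjunction was already verified to respect monoidal structures, so the concatenation is automatic.
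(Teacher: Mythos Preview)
Your proposal is correct and matches the paper's own proof essentially step for step: the paper also concatenates Corollary \ref{cor:finitesplitting}, Proposition \ref{prop:GspecHtoSHmod}, Theorem \ref{thm:finitemoritaequiv}, Theorem \ref{thm:finiteEtopisEt}, Corollaries \ref{cor:finiteETtoEA} and \ref{cor:monoidalcalc}, and Theorem \ref{thm:FiniteAlgMorita}. Your added remarks about composing monoidal Quillen equivalences and the change of enriching base are accurate but not needed beyond what the cited results already establish.
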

\begin{proof}
We begin with Corollary \ref{cor:finitesplitting},
which splits rational $G$-spectra into the product
$\prod_{(H) \leqslant G} L_{E \langle H \rangle} G \mcal_\qq$.
Applying Proposition \ref{prop:GspecHtoSHmod}
to each factor of this category allows us to move to
$\prod_{(H) \leqslant G} S_H \leftmod$.
Next we use Theorem \ref{thm:finitemoritaequiv}
to move to modules over a $Sp^\Sigma$-enriched category,
$\prod_{(H) \leqslant G} \rightmod \ecal_{top}^H$.
We move to algebra, $\prod_{(H) \leqslant G} \rightmod \ecal_{t}^H$,
with Theorem \ref{thm:finiteEtopisEt},
and then use Corollary \ref{cor:finiteETtoEA}
and Corollary \ref{cor:monoidalcalc}
to get to the category
$\prod_{(H) \leqslant G} \rightmod \ecal_a^H$.
Finally we use Theorem \ref{thm:FiniteAlgMorita}
to complete the result.
\end{proof}

Since we have symmetric monoidal functors we can apply 
\cite[Theorem 3.12]{ss03monequiv} to each stage of the comparison
to obtain the following corollaries. The unit of $S_H \leftmod$
is not cofibrant, but this presents no difficulty.

\begin{corollary}
For each subgroup $H$, the above zig-zag induces a
zig-zag of Quillen equivalences between the category
of algebras in $S_H \leftmod$ and the category of
algebras in $\ch(\qq W_G H \leftmod)$. 
\end{corollary}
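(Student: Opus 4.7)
The plan is to apply \cite[Theorem 3.12]{ss03monequiv} stage by stage to the chain of symmetric monoidal Quillen equivalences produced by the main theorem. That result lifts any weak monoidal Quillen equivalence between monoidal model categories satisfying the monoid axiom to a Quillen equivalence between the associated categories of monoids (i.e.\ algebras), provided a mild cofibrancy hypothesis on the units holds. By construction, the zig-zag running from $S_H \leftmod$ through $\rightmod \ecal_{top}^H$, $\rightmod \ecal_t^H$, $\rightmod C_0 \ecal_t^H$, $\rightmod \h_* \ecal_t^H$ and $\rightmod \ecal_a^H$ to $\ch(\qq W_G H \leftmod)$ consists entirely of (weak) symmetric monoidal Quillen equivalences, so the monoidal compatibility of every stage is already in hand and the argument is essentially a matter of verifying the remaining hypotheses.

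First I would record that the monoid axiom holds at every intermediate category: it is Proposition \ref{prop:chQGmonoid} for $\ch(\qq W_G H \leftmod)$; for each right-module category $\rightmod \ecal$ appearing in the zig-zag it is provided by the reference to \cite[Theorem 5.3.9]{barnes} made in Section \ref{sec:finitealg}; and for $S_H \leftmod$ it is inherited from the monoid axiom on $G \mcal_\qq$ together with the fact that $S_H$ is a commutative $S$-algebra. Next I would observe that the unit is cofibrant at every stage except the first: in each category $\rightmod \ecal$ the unit is the free module on the monoidal unit of $\ecal$, which is cofibrant by the description of the generating cofibrations, and $\qq \in \ch(\qq W_G H \leftmod)$ was shown to be cofibrant in Section \ref{sec:finitealg}. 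Applying \cite[Theorem 3.12]{ss03monequiv} at each of these stages therefore produces a Quillen equivalence on the associated categories of algebras, and composing the result yields the claimed zig-zag from algebras in $\rightmod \ecal_{top}^H$ to algebras in $\ch(\qq W_G H \leftmod)$.

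The main obstacle, and the only place the argument departs from the formal pattern, is the passage between algebras in $S_H \leftmod$ and algebras in $\rightmod \ecal_{top}^H$, since the unit $S_H$ of $S_H \leftmod$ is not cofibrant. This is not fatal: $S_H$ was constructed in Lemma \ref{lem:SHobject} as a commutative cell $S$-algebra, so it is cofibrant as a commutative $S$-algebra, which is precisely the cofibrancy needed to guarantee that cofibrant replacement of algebras in $S_H \leftmod$ behaves homotopically well. Concretely, one computes the derived adjunction on algebras by first replacing as an algebra and then applying the underlying left adjoint; since the forgetful functor from algebras to modules preserves and reflects weak equivalences (which is the role of the monoid axiom here), and since the underlying adjunction is a Quillen equivalence of symmetric monoidal model categories by Theorem \ref{thm:finitemoritaequiv}, the induced adjunction on algebras is a Quillen equivalence as well. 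Splicing this stage into the chain obtained above gives the full zig-zag of Quillen equivalences between algebras in $S_H \leftmod$ and algebras in $\ch(\qq W_G H \leftmod)$.
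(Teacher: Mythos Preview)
Your proposal is correct and follows essentially the same approach as the paper: apply \cite[Theorem 3.12]{ss03monequiv} stage by stage to the zig-zag of symmetric monoidal Quillen equivalences, with the only non-formal point being the non-cofibrant unit of $S_H \leftmod$. The paper's own treatment is a single sentence (``Since we have symmetric monoidal functors we can apply \cite[Theorem 3.12]{ss03monequiv} to each stage of the comparison \dots\ The unit of $S_H \leftmod$ is not cofibrant, but this presents no difficulty''), so your more careful verification of the monoid axiom at each stage and your explicit justification for why the non-cofibrant unit is harmless are a genuine expansion of what the paper records, but the strategy is identical.
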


Let $i \colon C_0 \ecal_t^H \to \ecal_t^H $ and 
$p \colon C_0 \ecal_t^H \to \h_* \ecal_t^H$ be the maps constructed in 
Corollary \ref{cor:finiteETtoEA}.
Let $\psi \colon \ecal_a^H \to \h_* \ecal_t^H$ be the isomorphism constructed in 
Theorem \ref{thm:hocalc}, since this is an isomorphism
we can write $(\psi^{-1})^*$ for the left adjoint to 
$\psi^*$. 
The composites  $D \circ \phi^*N \circ \widetilde{\qq}$
and $U' \circ L' \cofrep \circ R'$ 
give a derived equivalence between $\ho( \ecal_{top}^H \leftmod)$ 
and $\ho(\ecal_t^H \leftmod)$ as stated in Theorem \ref{thm:finiteEtopisEt}.
Note that no cofibrant replacements ($\cofrep$) are needed in the
first of these composites as we are working rationally.
The functor $L'$ is the alteration of $L$ to modules over an enriched category
as we have described in section \ref{sec:ETOPtoET}. 
We will then need to alter $L'$ so that it acts on categories
of algebras, using the notation of 
\cite{ss03monequiv} we define $L'' = (L')^{mon}$.
We write out the derived composite functors needed for the 
next corollary. That such functors exist is perhaps
of more interest than the explicit
formulas, see also section \ref{sec:organise}.
The terms $\id \cofrep$ and $\id \fibrep$ are from 
the adjunction of Lemma \ref{lem:postostable}, where we change from the
$Sp^\Sigma_+$ to $Sp^\Sigma$.

\begin{definition}\label{def:majorderiv}
Let $\Theta$ be the derived functor
\[ (-) \otimes_{\ecal_a^H} \gcal_a^H \circ \psi^* \circ (\cofrep -) \otimes_{C_0 \ecal_t^H} \h_* \ecal_t^H 
\circ i^* \circ
D \circ \phi^*N \circ \widetilde{\qq} \circ \id \cofrep  \circ \underhom(\gcal_{top}^H, -) 
\] 
from $S_H \leftmod$ to $\qq W_G H \leftmod$. 
Let $\hh$ be the derived functor
\[
(\cofrep -) \smashprod_{\ecal_{top}^H} \gcal_{top}^H \circ \id \fibrep \circ
U' \circ L''\cofrep \circ R' \circ 
(\cofrep -) \otimes_{C_0 \ecal_t^H} \ecal_t^H \circ p^* \circ (\psi^{-1})^* \circ 
\underhom(\gcal_{a}^H, -) 
\]
from $\qq W_G H \leftmod$ to $S_H \leftmod$. 
\end{definition}

\begin{corollary}
For each $S_H$-algebra $A$ there is a zig-zag of Quillen equivalences
between $A \leftmod$ and $\Theta A \leftmod$.
For each $\qq W_G H$-algebra $B$ 
there is a zig-zag of Quillen equivalences
between $B \leftmod$ and $\hh B \leftmod$.
\end{corollary}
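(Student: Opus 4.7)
The plan is to apply \cite[Theorem 3.12]{ss03monequiv} stage by stage to the zig-zag of symmetric monoidal Quillen equivalences
\[
S_H \leftmod \overleftarrow{\longrightarrow} \dots \overleftarrow{\longrightarrow} \ch(\qq W_G H \leftmod)
\]
constructed in the previous theorem (more precisely, each factor of the product appearing there). That result states that a weak monoidal Quillen equivalence between two monoidal model categories satisfying the monoid axiom induces, for every cofibrant monoid $A$ in the source, a Quillen equivalence between $A \leftmod$ and $A' \leftmod$, where $A'$ is the derived image of $A$; and similarly an induced Quillen equivalence between the associated categories of monoids. So the plan is essentially to check the hypotheses of that theorem at each step and then assemble.

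First I would verify that every category appearing in the zig-zag is a monoidal model category satisfying the monoid axiom. For $\rightmod \ecal$ where $\ecal$ is enriched over $\ch(\qq \leftmod)$, $Sp^\Sigma$, $Sp^\Sigma(\sqq \leftmod)$ or $Sp^\Sigma(\ch(\qq \leftmod)_+)$, these properties are inherited from the enriching category, as already noted in the paper (following \cite[Theorem 5.3.9]{barnes}); for $S_H \leftmod$ and $\ch(\qq W_G H \leftmod)$ this has either been checked above (Proposition \ref{prop:chQGmonoid}) or follows from the standard theory of modules over a commutative ring spectrum. Next I would verify that each adjunction in the zig-zag is a weak monoidal Quillen equivalence; all but $L'$ are strong monoidal, and the weak monoidality of $(L', \phi^*N)$ was verified in Section \ref{sec:ETOPtoET}.

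Second, for an $S_H$-algebra $A$, replace $A$ by a cofibrant $S_H$-algebra (this is the standard manoeuvre to handle the fact that the unit of $S_H \leftmod$ is not cofibrant; the resulting $A$-module category is Quillen equivalent to the original one). Applying the monoid half of \cite[Theorem 3.12]{ss03monequiv} at each stage, $A$ is transported through the zig-zag to a cofibrant algebra on the other side, and by construction (tracing the explicit formula in Definition \ref{def:majorderiv}, together with the fact that $(-)\otimes_{\ecal_a^H}\gcal_a^H$ is the left end of the last strong monoidal equivalence) its image in $\ch(\qq W_G H \leftmod)$ is weakly equivalent to $\Theta A$. Now the module half of \cite[Theorem 3.12]{ss03monequiv}, applied stage by stage, produces the desired zig-zag of Quillen equivalences between $A \leftmod$ and $\Theta A \leftmod$. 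The argument for $\hh B$ starting with a $\qq W_G H$-algebra $B$ is entirely dual: pass $B$ back through the zig-zag (using the right adjoints composed with the relevant cofibrant replacements) and identify the resulting cofibrant $S_H$-algebra with $\hh B$ up to weak equivalence.

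The main obstacle is the non-cofibrancy of the unit of $S_H \leftmod$, combined with the fact that the zig-zag is long and some intermediate Quillen equivalences (notably $(L', \phi^*N)$) are only weak monoidal. Concretely, one must insert cofibrant replacements of algebras at the stages where the monoidal structure is only lax and check that the derived functors between algebra (respectively module) categories compose consistently; this is where the definition of $L''$ as the monoidal alteration $(L')^{mon}$ of \cite{ss03monequiv} becomes essential, and where the explicit composites in Definition \ref{def:majorderiv} need to be interpreted as derived functors rather than point-set level maps. Once these bookkeeping issues are discharged, the conclusion is simply the concatenation of the stage-by-stage equivalences.
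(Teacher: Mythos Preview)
Your proposal is correct and follows exactly the approach the paper takes: the paper's entire argument is the sentence ``Since we have symmetric monoidal functors we can apply \cite[Theorem 3.12]{ss03monequiv} to each stage of the comparison to obtain the following corollaries. The unit of $S_H \leftmod$ is not cofibrant, but this presents no difficulty,'' together with the paragraph introducing $L''=(L')^{mon}$ and Definition \ref{def:majorderiv}. Your write-up is simply a more careful unpacking of that same strategy, including the verification of the monoid axiom and weak-monoidality hypotheses and the cofibrant-replacement bookkeeping that the paper leaves implicit.
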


\begin{remark}
We note here that we have made no statement 
regarding model categories of
\emph{commutative} algebras.  
This is because not all of the model categories 
that we use have been shown to 
have model categories of commutative algebras. 
We expect that if this technical problem is solved, 
then our result will imply that the
model categories of commutative algebras in 
$G \mcal_\qq$ and commutative algebras in the algebraic model are
Quillen equivalent.

In particular, one would have to construct
a model category of commutative algebras in $\rightmod \ecal_{top}^H$,
when we are using the model structure arising from $Sp^\Sigma$
(see Lemma \ref{lem:postostable}). Recall that the usual model structure
of commutative algebras in $Sp^\Sigma$ is constructed from the
positive model structure, $Sp^\Sigma_+$. 
\end{remark}


\begin{thebibliography}{EKMM97}

\bibitem[Ada74]{adams}
J.~F. Adams.
\newblock {\em Stable homotopy and generalised homology}.
\newblock University of Chicago Press, Chicago, Ill., 1974.
\newblock Chicago Lectures in Mathematics.

\bibitem[Bar08]{barnes}
David Barnes.
\newblock {\em Rational Equivariant Spectra}.
\newblock PhD thesis, University of Sheffield, 2008.
\newblock {\tt arXiv: 0802:0954v1[math.AT]}.

\bibitem[Bor94]{bor94}
Francis Borceux.
\newblock {\em Handbook of categorical algebra. 2}, volume~51 of {\em
  Encyclopedia of Mathematics and its Applications}.
\newblock Cambridge University Press, Cambridge, 1994.
\newblock Categories and structures.

\bibitem[Day70]{day70}
Brian Day.
\newblock On closed categories of functors.
\newblock In {\em Reports of the Midwest Category Seminar, IV}, Lecture Notes
  in Mathematics, Vol. 137, pages 1--38. Springer, Berlin, 1970.

\bibitem[DS07]{dugshi}
Daniel Dugger and Brooke Shipley.
\newblock Enriched model categories and an application to additive endomorphism
  spectra.
\newblock {\em Theory Appl. Categ.}, 18:400--439 (electronic), 2007.

\bibitem[EKMM97]{EKMM97}
A.~D. Elmendorf, I.~Kriz, M.~A. Mandell, and J.~P. May.
\newblock {\em Rings, modules, and algebras in stable homotopy theory},
  volume~47 of {\em Mathematical Surveys and Monographs}.
\newblock American Mathematical Society, Providence, RI, 1997.
\newblock With an appendix by M. Cole.

\bibitem[GM95]{gremay95}
J.~P.~C. Greenlees and J.~P. May.
\newblock Generalized {T}ate cohomology.
\newblock {\em Mem. Amer. Math. Soc.}, 113(543):viii+178, 1995.

\bibitem[GS]{greshi}
J.~P.~C. Greenlees and B.~Shipley.
\newblock An algebraic model for rational torus-equivariant spectra.
\newblock Preprint, available on the internet at http://
  www.greenlees.staff.shef.ac.uk/preprints/tnq3.dvi.

\bibitem[Hov99]{hov99}
Mark Hovey.
\newblock {\em Model categories}, volume~63 of {\em Mathematical Surveys and
  Monographs}.
\newblock American Mathematical Society, Providence, RI, 1999.

\bibitem[Hov01]{hov01}
Mark Hovey.
\newblock Spectra and symmetric spectra in general model categories.
\newblock {\em J. Pure Appl. Algebra}, 165(1):63--127, 2001.

\bibitem[Kel05]{kell05}
G.~M. Kelly.
\newblock Basic concepts of enriched category theory.
\newblock {\em Repr. Theory Appl. Categ.}, (10):vi+137 pp. (electronic), 2005.
\newblock Reprint of the 1982 original [Cambridge Univ. Press, Cambridge;
  MR0651714].

\bibitem[LMSM86]{lms86}
L.~G. Lewis, Jr., J.~P. May, M.~Steinberger, and J.~E. McClure.
\newblock {\em Equivariant stable homotopy theory}, volume 1213 of {\em Lecture
  Notes in Mathematics}.
\newblock Springer-Verlag, Berlin, 1986.
\newblock With contributions by J. E. McClure.

\bibitem[Mac71]{mac}
Saunders MacLane.
\newblock {\em \ Categories for the Working Mathematician}.
\newblock Springer-Verlag, New York, 1971.
\newblock Graduate Texts in Mathematics, Vol. 5.

\bibitem[May96]{may96}
J.~P. May.
\newblock {\em Equivariant homotopy and cohomology theory}, volume~91 of {\em
  CBMS Regional Conference Series in Mathematics}.
\newblock Published for the Conference Board of the Mathematical Sciences,
  Washington, DC, 1996.
\newblock With contributions by M. Cole, G. Comeza\~na, S. Costenoble, A. D.
  Elmendorf, J. P. C. Greenlees, L. G. Lewis, Jr., R. J. Piacenza, G.
  Triantafillou, and S. Waner.

\bibitem[MM02]{mm02}
M.~A. Mandell and J.~P. May.
\newblock Equivariant orthogonal spectra and {$S$}-modules.
\newblock {\em Mem. Amer. Math. Soc.}, 159(755):x+108, 2002.

\bibitem[MMSS01]{mmss01}
M.~A. Mandell, J.~P. May, S.~Schwede, and B.~Shipley.
\newblock Model categories of diagram spectra.
\newblock {\em Proc. London Math. Soc. (3)}, 82(2):441--512, 2001.

\bibitem[Shi02]{shi02}
Brooke Shipley.
\newblock An algebraic model for rational {$S\sp 1$}-equivariant stable
  homotopy theory.
\newblock {\em Q. J. Math.}, 53(1):87--110, 2002.

\bibitem[Shi07a]{shiHZ}
Brooke Shipley.
\newblock {$H \mathbb{Z}$}-algebra spectra are differential graded algebras.
\newblock {\em Amer. J. Math.}, 129(2):351--379, 2007.

\bibitem[Shi07b]{HZcorrection}
Brooke Shipley.
\newblock Correction to ``{$H\mathbb{Z}$}-algebra spectra are differential
  graded algebras''.
\newblock 2007.
\newblock {\tt arXiv:0708.1299v1 [math.AT]}.

\bibitem[SS00]{ss00}
Stefan Schwede and Brooke~E. Shipley.
\newblock Algebras and modules in monoidal model categories.
\newblock {\em Proc. London Math. Soc. (3)}, 80(2):491--511, 2000.

\bibitem[SS03a]{ss03monequiv}
Stefan Schwede and Brooke Shipley.
\newblock Equivalences of monoidal model categories.
\newblock {\em Algebr. Geom. Topol.}, 3:287--334 (electronic), 2003.

\bibitem[SS03b]{ss03stabmodcat}
Stefan Schwede and Brooke Shipley.
\newblock Stable model categories are categories of modules.
\newblock {\em Topology}, 42(1):103--153, 2003.

\bibitem[Str08]{Dsymmon}
Neil Strickland.
\newblock Is {$D$} symmetric monoidal?
\newblock To appear, 2008.

\end{thebibliography}
\end{document}